\documentclass{compositio} 
\usepackage[USenglish]{babel} 
\usepackage[T1]{fontenc}
\usepackage[ansinew]{inputenc}
 \usepackage{stmaryrd}
            
\usepackage[shortlabels]{enumitem}

\usepackage[toc,page]{appendix}
\usepackage{hyperref}

\usepackage{amsmath,amsthm,amsfonts,amssymb}%
\usepackage{bm}
\usepackage{graphicx}
\usepackage[all,cmtip]{xy}
\usepackage{tikz-cd}
\usepackage[nottoc]{tocbibind}
\tikzset{%
    symbol/.style={%
        ,draw=none
        ,every to/.append style={%
            edge node={node [sloped, allow upside down, auto=false]{$#1$}}}
    }
}
\usetikzlibrary{arrows}

\pgfdeclarelayer{background}
\pgfsetlayers{background,main}

\DeclareMathOperator {\Sym}{Sym}
\DeclareMathOperator {\Hom}{Hom}

\DeclareMathOperator {\Map}{Map}
\DeclareMathOperator {\Diff}{Diff}

\DeclareMathOperator {\id}{id}
\DeclareMathOperator {\Lie}{Lie}
\DeclareMathOperator {\Coh}{Coh}

\def \Z {\mathbb{Z}}
\def \C {\mathbb{C}}

\def \R {\mathbb{R}}
\def \bT {\mathbb{T}}

\def \D {\mathcal{D}}

\def \< {\langle}
\def \> {\rangle}

\def \S {\mathcal{S}}

\def \O {\mathcal{O}}

\def \N {\mathbb{N}}
\def \W {\mathcal{W}}

\def \E {\mathcal{E}}
\def \M {\mathcal{M}}
\def \F {\mathcal{F}}
\def \U {\mathcal{U}}

\def \H {\mathcal{H}}

\def \G {\mathcal{G}}

\def \X {\mathcal{X}}

\def \I  {\mathcal{I}}
\def \Ell {\mathcal{E}\ell\ell}

\def \to {\rightarrow}

\def \t {\mathfrak{t}}

\def \SL {\mathrm{SL}}
\def \GL {\mathrm{GL}}
\def \pt {\mathrm{pt}}


\theoremstyle{plain}
\newtheorem{theorem}{Theorem}[section]
\newtheorem{lemma}[theorem]{Lemma}
\newtheorem{proposition}[theorem]{Proposition}
\newtheorem{corollary}[theorem]{Corollary}

\theoremstyle{definition}
\newtheorem{definition}[theorem]{Definition}
\newtheorem{example}[theorem]{Example}
\newtheorem{notation}[theorem]{Notation}
\newtheorem{remark}[theorem]{Remark}
\newtheorem{conventions}[theorem]{Conventions}

\theoremstyle{remark}

\begin{document}
\title[Complex analytic elliptic cohomology from double loop spaces] 
{A construction of complex analytic elliptic cohomology from double free loop spaces} 
\author{Matthew Spong}
\email{matt.spong@gmail.com} 
\address{School of Mathematics and Statistics\\The University of Melbourne\\Parkville VIC 3010\\Australia}
\shortauthors{M. Spong}
\classification{55N34}
\keywords{elliptic cohomology, double loop spaces, double loop groups}
\thanks{This research was in part supported by the Discovery Project grant DP160104912, Subtle Symmetries and the Refined Monster. } 
\begin{abstract}
We construct a complex analytic version of an equivariant cohomology theory which appeared in a paper of Rezk, and which is roughly modeled on the Borel-equivariant cohomology of the double free loop space. The construction is defined on finite, torus-equivariant CW complexes and takes values in coherent holomorphic sheaves over the moduli stack of complex elliptic curves. Our methods involve an inverse limit construction over all finite dimensional subcomplexes of the double free loop space, following an analogous construction of Kitchloo for single free loop spaces. We show that, for any given complex elliptic curve $\mathcal{C}$, the fiber of our construction over $\mathcal{C}$ is isomorphic to Grojnowski's equivariant elliptic cohomology theory associated to $\mathcal{C}$. 
\end{abstract}

\maketitle
\vspace{5mm}

The first construction of a complex analytic, equivariant elliptic cohomology theory was given by Grojnowski in 1994, in a note now published as \cite{Groj}. Although Grojnowski originally built his theory for the purpose of constructing certain elliptic algebras, it has since found numerous applications, having been used, for example, to give a conceptual proof of the rigidity of the Ochanine genus (see \cite{Rosu}). However, it is a technical and ad hoc construction, and alternate models have since been desired that clarify its relationship to fields outside of geometric representation theory. For some recent developments in this direction, see for example \cite{BET}, \cite{Kitch1}, and \cite{Rezk}.

In Section 5 of \cite{Rezk}, Rezk introduced a construction of a $G$-equivariant cohomology theory $E^*_G$ defined on $G$-CW complexes for a certain class of connected Lie groups $G$, which includes compact tori. It is not a complex analytic construction because the coefficient ring $E^*_G(\pt)$ is a polynomial ring rather than a ring of holomorphic functions. However, Rezk conjectured that if $E_G^*$ could be made complex analytic in a suitable way, then it would serve as a model for Grojnowski's theory. 

One of the main ingredients in Rezk's construction is the \textit{double free loop space} of a $G$-CW complex $X$, which we write as the space of continuous maps
\[
L^2X := \Map(\bT^2,X),
\]
where $\bT$ denotes the parametrised circle $\R/\Z$. The construction proceeds by considering the subspace of \textit{ghost maps}
\[
L^2X^{gh} \subset L^2X,
\]
which consists precisely of the maps $\bT^2 \to X$ whose image is contained in a single $G$-orbit. The space $L^2X$ comes equipped with a natural action of the semidirect product group
\[
\Diff(\bT^2) \ltimes \Map(\bT^2,G),
\]
which Rezk denotes by $\W(G)$, and which preserves $L^2X^{gh}$. There is a connected subgroup $\bT^2 \times G \subset \W(G)$ consisting of the group of translations $\bT^2 \subset \Diff(\bT^2)$ and the group of constant loops $G \subset L^2G$. Rezk defines $E_G^*(X)$ as the Borel $\bT^2 \times G$-equivariant cohomology ring
\begin{equation}\label{timid}
H^*_{\bT^2 \times G}(L^2X^{gh};\C),
\end{equation}
equipped with a natural action of $\C^\times \times \overline{\W}(G)$, where $\overline{\W}(G)$ is the discrete group $\pi_0 \W(G)$, and $\C^\times$ acts via the $\Z$-grading on cohomology.\footnote{We explain how a $\C^\times$-action relates to a $\Z$-grading in Remark \ref{modforms}, albeit in a somewhat different context.} Using the subspace of ghost maps $L^2X^{gh}$ instead of the full double loop space $L^2X$ ensures that $E_G^*$ is a cohomology theory in $X$. 

To see what $E_G^*$ has to do with elliptic curves, consider the case that $G$ is the trivial group $1$. Then \eqref{timid} is naturally a module over $H^*_{\bT^2}(\pt; \C)$, which may be identified with the ring $\C[t_1,t_2]$ of polynomial functions on 
\[
\Lie(\bT^2) \otimes \C \cong \C^2.
\]
Thus, by a standard trick, the module \eqref{timid} may be regarded as a sheaf of modules over $\C^2$. Rezk restricts the sheaf to the subspace $\X \subset \C^2$ consisting of those pairs of complex numbers which generate a lattice in $\C$, and calculates the induced action of
\[
\C^\times \times \overline{\W}(1) \cong \C^\times \times \GL_2(\Z)
\]
on $\X$. Rezk observes that this action classifies complex elliptic curves, in the sense that the quotient stack associated to the action is the moduli stack $\M$ of complex elliptic curves. 

Rezk's conjecture is that a suitably defined complex analytic version of $E^*_G(X)$ would, for a finite $G$-CW complex $X$, yield a coherent holomorphic sheaf over $\M$, the fiber of which is Grojnowski's theory for a particular elliptic curve. However, as Rezk points out, tensoring $E^*_G(X)$ with holomorphic coefficients does not behave well, because $E^*_G(X)$ is often non-Noetherian, even when $X$ is a $G$-orbit. In this article, we solve this problem in the case of a torus $G = T$ by applying an idea that appeared in a paper \cite{Kitch1} of Kitchloo.\footnote{Although Kitchloo applied this idea in the context of the K-theory of single free loop spaces, our construction is largely analogous to his, and uses formal properties common to both Borel equivariant cohomology and equivariant K-theory.} Namely, the idea is to replace the cohomology ring 
\[
H^*_{\bT^2 \times T}(L^2X^{gh};\C)
\]
with the inverse limit of sheaves
\[
\H^*_{\bT^2 \times T}(L^2X) := \varprojlim_{Y \subset L^2X}  H^*_{\bT^2 \times T}(Y;\C) \otimes_{H_{\bT^2 \times T}(\pt;\C)} \O_{\X^+ \times \Lie(T) \otimes \C}
\]
over all finite subcomplexes $Y$ of $L^2X$. Here we view the cohomology ring with the $\Z/2\Z$-grading by even and odd degree, while the sheaf $\O_{\X^+ \times \Lie(T)\otimes \C}$ of holomorphic functions has the trivial grading. Tensoring with holomorphic functions before applying the limit behaves well because the Borel equivariant cohomology ring of a finite CW-complex is finitely generated over $H_{\bT^2 \times T}(\pt)$. To make computations more tractable, we will show that the inverse limit above is isomorphic to the inverse limit over a much smaller set $\D(X)$ of subspaces of $L^2X$, the colimit of which is a subspace of $L^2X^{gh}$. Thus, our construction only depends on ghost maps, which supports the view that it yields a complex analytic version of Rezk's construction. 

It turns out that the inverse limit sheaf $\H^*_{\bT^2 \times T}(L^2X)$ admits an interesting action of $\C^\times \times W_{\widetilde{L^2T}}$, where $W_{\widetilde{L^2T}}$ is the Weyl group of the maximal torus $\bT^2 \times T$ in the extended double loop group\footnote{Here, $W_{\widetilde{L^2T}}$ plays the role of Rezk's group $\overline{\W}(T)$, and $\widetilde{L^2T}$ plays the role of $\W(T)$.} 
\[
\widetilde{L^2T} := (\SL_2(\Z) \ltimes \bT^2) \ltimes L^2T.
\]
We show that taking invariants of $\H^*_{\bT^2 \times T}(L^2X)$ yields a coherent, $\Z/2\Z$-graded holomorphic sheaf $\E_T^*(X)$ over a certain stack $\M_T$ over $\M$, and that the fiber of $\E_T^*$ over any given elliptic curve is naturally isomorphic to Grojnowski's cohomology theory. This confirms Rezk's conjecture.

We now give an outline of the structure of the paper. In Section \ref{bread} we introduce some basic objects, including the moduli stack
\[
\M = [(\C^\times \times \SL_2(\Z)) \backslash\!\!\backslash \X^+]
\]
of complex elliptic curves. In our case, $\M$ is modelled on the $\C^\times \times \SL_2(\Z)$-equivariant space $\X^+$ of pairs of complex numbers $(t_1,t_2)$ such that $t_1/t_2$ has positive imaginary part.\footnote{This is equivalent to the quotient stack obtained via the action of $\C^\times \times \GL_2(\Z)$ on $\X$.} In Section \ref{milk}, we introduce Borel-equivariant cohomology and state some of its important properties, and then in Section \ref{Grojn} we summarise Grojnowski's construction. In Section \ref{water}, we introduce the $\C^\times \times \SL_2(\Z)$-equivariant space $E_T$, which will provide us with a model for the stack $\M_T$. We show in Section \ref{soup} that, given a finite $T$-CW complex $X$, there exists an open cover of $E_T$ which is adapted to $X$ in a certain sense. The open cover will be used in Section \ref{low} to obtain some fixed-point results for local values of the inverse limit sheaf $\H^*_{\bT^2 \times T}(L^2X)$, defined in Definition \ref{varlimm}. We use those fixed-point results to give a computable description of $\H^*_{\bT^2 \times T}(L^2X)$ in Theorem \ref{yess}, and then again in Theorem \ref{actyon1} to show that $\H^*_{\bT^2 \times T}(L^2X)$ admits an action of $\C^\times \times W_{\widetilde{L^2T}}$. Section \ref{brew} culminates with the main construction in this paper, which is the $\C^\times \times \SL_2(\Z)$-equivariant sheaf $\E_T^*(X)$ on $E_T$ defined in Definition \ref{sherpa}. This is produced essentially as the invariants of the sheaf $\H^*_{\bT^2 \times T}(L^2X)$ with respect to a lattice subgroup of $\C^\times \times W_{\widetilde{L^2T}}$. The equivariant sheaf $\E_T^*(X)$ is equivalent to a sheaf on the quotient stack 
\[
\M_T = [(\C^\times \times \SL_2(\Z)) \backslash\!\!\backslash E_T] 
\]
over $\M$. In Section \ref{salt}, we show that the fiber of $\E^*_T$ over any elliptic curve $\mathcal{C} \in \M$ is a $T$-equivariant elliptic cohomology theory, in a sense appropriate to Grojnowski's construction. Section \ref{pepper} is a calculation of $\E^*_T(X)$ for a $T$-orbit $X = T/K$, which allows one to compute $\E_T^*(X)$ for any finite complex $X$ using the Mayer-Vietoris sequence. The final section is Section \ref{calf}, in which we give a local description of the fiber of $\E^*_{T}(X)$ over an elliptic curve $\mathcal{C}$. In other words, we fix an arbitrary curve $\mathcal{C}$ and an open cover adapted to a finite complex $X$, we take the restriction of $\E^*_T(X)$ to $\mathcal{C} \in \M$, and we compute it as a collection of sheaves indexed by the elements of the open cover, along with some gluing maps. Our main result is that this is exactly Grojnowski's $T$-equivariant elliptic cohomology of $X$, for the elliptic curve $\mathcal{C}$. This is Corollary \ref{glutes}.

\begin{conventions}
All group actions are from the left, unless otherwise indicated. For a group $G$ acting on a space $X$, we use $g \cdot x$ to denote the action of $g\in G$ on $x \in X$, and we use $gg'$ to denote the group product of $g,g' \in G$. We denote by $G\backslash\!\!\backslash X$ the corresponding action groupoid, and we enclose this within square brackets to denote the underlying quotient stack. A sheaf on the action groupoid $G\backslash\!\!\backslash X$ is equivalent to a $G$-equivariant sheaf on $X$, which yields a sheaf on the quotient stack $[G\backslash\!\!\backslash X]$ (as in \cite[\href{https://stacks.math.columbia.edu/tag/06WT}{Tag 06WT}]{stacks}). 

If $X$ and $Y$ are topological spaces, then the set of continuous maps $\Map(X,Y)$ is regarded as a space with the compact-open topology. If $A$ is an abelian group, $H$ an arbitrary group, and $A$ acts on $H$, then our convention for the group law of the semidirect product $A \ltimes H$ is
\[
(a',h')(a,h) = (a'a, a^{-1}\cdot h' h).
\]
The tensor product of two $\Z$-modules is over $\Z$, unless otherwise specified. All rings are assumed to have a multiplicative identity. For (not necessarily square) matrices $A$, $m$, and $t$, we use expressions such as $Am$, $mt$, and $mA$ to mean matrix multiplication. So, for example, if $m = (m_1,m_2)$ and $t = (t_1,t_2)$ are vectors, then $mt$ means the dot product, where the transpose of a vector should be understood wherever it is necessary to make sense of an expression.
\end{conventions}

\begin{acknowledgements}
This paper originated in my doctoral thesis and owes its existence to my supervisor, Nora Ganter, who has been an invaluable source of encouragement and support. Most of the ideas in this paper are due to Charles Rezk and Nitu Kitchloo, and I am grateful to them for reading my thesis and for their valuable comments. I also wish to thank Daniel Berwick-Evans, Christian Haesemeyer, Arun Ram, Marcy Robertson, Yaping Yang and Gufang Zhao for stimulating conversations. Finally, I am indebted to the anonymous referee for closely reading the original manuscript and providing many valuable suggestions and corrections.
\end{acknowledgements}

\section{Elliptic curves over $\C$ and other basic objects}\label{bread}

In this section, we list some well known facts concerning the classification of elliptic curves over $\C$, drawn from the short summary appearing in Section 2 of \cite{Rezk}. We also introduce some other basic objects, including the base space $E_{T,t}$ of Grojnowski's construction \cite{Groj}.

\begin{remark}
Consider the subspace 
\[
\X := \{ (t_1,t_2) \in \C^2 \, | \, \R t_1 + \R t_2 = \C \} \subset \C^2.
\]
An element $t = (t_1,t_2) \in \X$ defines a lattice
\[
\Lambda_t := \Z t_1 + \Z t_2 \subset \C.
\]
It is easily verified that $\X$ is preserved under left multiplication by $\GL_2(\Z)$, and that $\Lambda_t = \Lambda_{t'}$ if and only if there is a matrix $A \in \GL_2(\Z)$ such that $At = t'$. 
\end{remark}

\begin{definition}
An \textit{elliptic curve over $\C$} is a complex manifold
\[
E_t := \Lambda_t \backslash \C,
\]
along with the quotient group structure induced by the additive group $\C$. A \textit{map of elliptic curves} $E_t \to E_{t'}$ is induced by multiplication by a nonzero complex number $\lambda$ satisfying $\lambda \Lambda_t \subset \Lambda_{t'}$. Such a map is an isomorphism if and only if $\lambda \Lambda_t = \Lambda_{t'}$. 
\end{definition}

\begin{remark}\label{dominic}
Two elliptic curves $E_t$ and $E_{t'}$ are equal if and only if there exists a matrix $A \in \GL_2(\Z)$ such that $At = t'$, and isomorphisms $E_t \cong E_{t'}$ correspond bijectively to pairs $(\lambda,A) \in \C^\times \times \GL_2(\Z)$ such that $\lambda A t = t'$. Therefore, elliptic curves over $\C$ are classified by the action of $\C^\times \times \GL_2(\Z)$ on $\X$ given by $(\lambda,A)\cdot t = \lambda At$. Alternatively, they are also classified by the action of the subgroup 
\[
\C^\times\times \SL_2(\Z) \subset \C^\times \times \GL_2(\Z)
\]
on the subspace
\[
\X^+ = \{(t_1,t_2) \in \X \, |\, \mathrm{Im}(t_1/t_2) > 0\} \subset \X,
\]
which is easily seen to inherit such an action. The corresponding moduli stack of complex elliptic curves is written
\[
\M := [(\C^\times\times \SL_2(\Z)) \backslash\!\!\backslash \X^+].
\]
It will in fact be necessary for us to instead model $\M$ on the $\C^\times \times \SL_2(\Z)$-action on $\X^+$ given by $(\lambda,A)\cdot t = \lambda^2 At$. However, this is easily seen to be equivalent to the above definition of $\M$.
\end{remark}

\begin{remark}\label{fad}
In this paper, we use $K$ to denote an arbitrary compact abelian group, we use $T$ to denote an arbitrary compact torus, and we write $\bT$ for the parameterised circle $\R/\Z$. We define the cocharacter lattice of a compact abelian group $K$ to be the group of continuous group homomorphisms
\[
\check{K} := \Hom(\bT,K).
\]
The evaluation map $\check{K} \otimes \bT \to K$ is an isomorphism onto the identity component of $K$, which induces a canonical identification
\[
\check{K} \otimes \R \cong \Lie(K)
\]
of Lie algebras. Similarly, the exponential map $\exp_K$ of $K$ is canonically identified with the composite map
\[
\check{K} \otimes \R \twoheadrightarrow \check{K} \otimes \bT \hookrightarrow K
\] 
where the first map is induced by the projection $\exp: \R \twoheadrightarrow \R/\Z = \bT$. Note that the kernel of $\exp_K$ is $\check{K} \otimes \Z \cong \check{K}$. We write $\t$ for the Lie algebra of a torus $T$ and $\t_\C$ for its complexification $\t \otimes_\R \C$, since we use these objects often. Similarly, we write $\Lie(K)_\C$ for $\Lie(K) \otimes_\R \C$.
 
Moreover, let $\hat{K}$ denote the character group $\Hom(K,\bT)$ of $K$. By the Pontryagin duality theorem, we have a canonical isomorphism 
\[
K \cong \Hom(\hat{K},\bT)
\]
of compact abelian groups. There is a canonical homomorphism $\check{K} \to \Hom(\hat{K},\Z)$, natural in $K$, which arises from the pairing 
\[
\begin{array}{rcl}
\hat{K} \times \check{K} & \longrightarrow & \Hom(\bT,\bT) \cong \Z \\
(\mu,m) & \longmapsto & \mu \circ m,
\end{array}
\]
where the identity map in $\Hom(\bT,\bT)$ is identified with $1 \in \Z$. This homomorphism induces a commutative diagram
\begin{equation}\label{psy}
\begin{tikzcd}
\Lie(K) \cong \check{K} \otimes \R \ar[r,"\sim"] \ar[d, twoheadrightarrow] & \Hom(\hat{K},\R)\ar[d, "\exp_K"]  \\
\check{K} \otimes \bT \ar[r, hookrightarrow] & \Hom(\hat{K},\bT) \cong K
\end{tikzcd}
\end{equation}
where the vertical maps are induced by the projection $\exp: \R \twoheadrightarrow \R/\Z = \bT$. Since $\R$ is a free $\Z$-module, the upper horizontal map is an isomorphism. The lower horizontal map is canonically identified with the inclusion of the identity component into $K$, so that we may identify the right vertical map with the exponential map $\exp_K$ of $K$. In the case of a torus $K = T$, the pairing is perfect, so that we have a canonical identification $\check{T} \cong \Hom(\hat{T},\Z)$. Therefore, for a torus, the lower horizontal map of diagram \eqref{psy} is an isomorphism.

For the elliptic curve $E_t := \Lambda_t \backslash \C$ corresponding to $t \in \X$, define
\[
E_{K,t} := \Hom(\hat{K},E_t).
\]
In the case of a torus $K = T$, there is a canonical isomorphism
\[
E_{T,t} \cong \check{T} \otimes E_t
\]
and the quotient map $\C \twoheadrightarrow E_t$ induces a quotient map
\[
\zeta_{T,t}: \t_\C \twoheadrightarrow E_{T,t},
\]
by tensoring with $\check{T}$.
\end{remark}

\begin{definition}
Let $T$ be a compact torus. A $T$-CW complex $X$ is defined as a union 
\[
\bigcup_{n\in \N} X^n
\]
of $T$-subspaces $X^n$ such that
\begin{enumerate}
\item $X^0$ is a disjoint union of orbits $T/K$, where $K \subset T$ is a closed subgroup; and
\item $X^{n+1}$ is obtained from $X^n$ by attaching $T$-cells $T/K \times D^{n+1}$ along $T$-equivariant attaching maps $T/K \times S^n \rightarrow X^n$, where $T$ acts trivially on $D^{n+1}$.
\end{enumerate}
A \textit{finite $T$-CW complex} is a $T$-CW complex which is a union of finitely many $T$-cells. A \textit{pointed $T$-CW complex} is a $T$-CW complex along with a distinguished $T$-fixed basepoint in the $0$-skeleton of $X$. A map $f: X \to Y$ of (pointed) $T$-CW complexes is a $T$-equivariant map such that $f(X^n) \subset Y^n$ for all $n$ (and preserving the basepoint). 
\end{definition}

\begin{example}\label{repsphere}
Let $T$ be a rank one torus and let $\lambda \in \hat{T}$ be an irreducible character of $T$. The \textit{representation sphere associated to $\lambda$} is the one-point compactification $S_\lambda$ of the one dimensional complex representation $\C_\lambda$ associated to $\lambda$. This may be equipped with the structure of a finite $T$-CW complex where
\[
X^0 = T/T \times \{\infty\} \: \amalg \: T/T \times \{0\},
\]
and $X^1 =  T \times D^1$, with $T$-equivariant attaching map $T \times S^0 \rightarrow X^0$ given by sending one end of $D^1$ to $\{0\}$ and the other end to $\{\infty\}$. An element $z \in T$ acts by multiplication by $\lambda(z)$ on the left factor of $X^1$ and trivially on the right factor.
\end{example}

\section{Some properties of Borel-equivariant cohomology}\label{milk}

In this section we introduce Borel-equivariant cohomology, a fundamental ingredient of our construction, and state several of its properties which will be useful to us. First note that, for any topological group $G$, one obtains by the Milnor construction a contractible space $EG$ with a free right action of $G$. For a $G$-space $X$, the Borel construction $EG \times_G X$ of $X$ is the topological quotient of $EG \times X$ by the equivalence relation $(x\cdot g,y) \sim (x,g\cdot y)$. The Borel-equivariant cohomology of $X$ is then defined as the cohomology ring $H^*(EG \times_G X; \C)$. We use the notation $H_G^*(X)$ to mean the $\Z/2\Z$-graded commutative ring 
\[
H^*_G(X) = H^{even}(EG \times_G X; \C) \oplus H^{odd}(EG \times_G X; \C) 
\]
graded by the parity of the cohomological degree. We write $H^*_G$ for $H^*_G(\pt) = H^*(BG;\C)$, and will often drop the asterisk from this ring since $H^{odd}_G(\pt) = 0$ (see Remark \ref{remmie}). Since the unique map from $X$ to a point induces a map $H^*_G \to H^*_G(X)$ of graded rings, $H^*_G(X)$ is naturally a $\Z/2\Z$-graded commutative algebra over $H^*_G$. 

For the remainder of this section, we return to the case where $G$ is a compact torus $T$. Our reference for Borel-equivariant cohomology is \cite{AB}.

\begin{proposition}\label{change1}
There is an isomorphism of $\Z/2\Z$-graded rings
\[
H^*_T(T/K) \cong H^*_K.
\]
\end{proposition}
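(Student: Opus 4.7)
The plan is to exhibit an explicit homotopy equivalence between the Borel construction $ET \times_T (T/K)$ and a model for $BK$, from which the isomorphism of cohomology rings follows immediately by the definition of Borel-equivariant cohomology.

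First I would construct a homeomorphism
\[
\varphi: ET \times_T (T/K) \xrightarrow{\;\sim\;} ET/K,
\]
defined on equivalence classes by $\varphi([e, tK]) = [e \cdot t]_K$, with inverse $[e]_K \mapsto [e, K]$. A routine verification shows that $\varphi$ is well-defined and continuous, using only the fact that the $T$-action on $ET$ restricts to a $K$-action along the inclusion $K \hookrightarrow T$.

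Next I would observe that, since $ET$ is contractible and $T$ acts freely on $ET$, the restricted $K$-action on $ET$ is also free. Hence $ET$ serves as a model for $EK$, and therefore $ET/K$ is a model for the classifying space $BK$. Combining this with the homeomorphism $\varphi$ gives
\[
H^*_T(T/K) = H^*(ET \times_T (T/K); \C) \cong H^*(ET/K; \C) = H^*(BK; \C) = H^*_K,
\]
and the naturality of the cup product (together with the fact that $\varphi$ is induced by a continuous map) ensures this is an isomorphism of graded rings, and hence of $\Z/2\Z$-graded rings after collapsing by parity.

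There is no real obstacle here; the only thing to be careful about is checking that $\varphi$ respects the quotient relations on both sides and that the identification of $ET/K$ with $BK$ is legitimate, which reduces to the standard fact that a contractible space with a free action of a topological group is a universal bundle for that group.
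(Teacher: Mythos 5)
Your proof is correct and takes essentially the same route as the paper: identify $ET \times_T (T/K)$ with $ET/K$ and observe that $ET$, being contractible with a free (restricted) $K$-action, is a model for $EK$. The only difference is that you write out the homeomorphism $\varphi$ explicitly, which the paper asserts without detail.
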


\begin{proof}
Since $K$ acts freely on $ET$, the space $ET$ is a model for $EK$. Therefore, 
\[
ET \times_T T/K \cong ET/K
\]
is a model for $BK$. 
\end{proof}

In the next two lemmas, we consider the following setup. Let $T \twoheadrightarrow K \twoheadrightarrow G$ be a composition of surjective maps of compact abelian groups, where $T$ is a torus. If $X$ is a finite $G$-CW complex, then these maps induce a commutative diagram 
\begin{equation}
\begin{tikzcd}
ET \times_T X \ar[d,"h"] \ar[r,"j"] & BT \ar[d,"p"] \\
EK \times_K X \ar[r,"g"] \ar[d,"f"] & BK \ar[d] \\
EG \times_G X \ar[r] & BG 
\end{tikzcd}
\end{equation}
where both squares are pullback diagrams. The following lemma is proved as Proposition 2.3.4 in Chen's thesis \cite{Chen}. 

\begin{proposition}\label{changeh}
Let $X$ be a finite $K$-CW complex. There is an isomorphism of $\Z/2\Z$-graded $H_T$-algebras
\[
H^*_{K}(X) \otimes_{H_{K}} H_T \cong H^*_T(X)
\]
natural in $X$, and induced by $h^* \cup j^*$. 
\end{proposition}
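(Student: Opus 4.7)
The candidate map $\Phi$ sends $a \otimes b \in H^*_K(X)\otimes H_T$ to $h^*(a)\cup j^*(b)\in H^*_T(X)$. It descends to the tensor product $\otimes_{H_K}$ because commutativity of the right-hand square gives $j^*\circ p^* = h^*\circ g^*$, equating the two $H_K$-actions; the fact that $\Phi$ is an $H_T$-algebra homomorphism natural in $X$ is built into the definition. My plan is to identify $\Phi$ with the isomorphism supplied by the Leray--Hirsch theorem (equivalently, by the Eilenberg--Moore spectral sequence) applied to $h\colon ET\times_T X \to EK\times_K X$, which, as the pullback along $g$ of $p\colon BT \to BK$, is a fiber bundle with fiber $BN$ where $N := \ker(T\twoheadrightarrow K)$.

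The essential input is freeness of $H^*(BT;\C)$ over $H^*(BK;\C)$ together with surjectivity of the fiber restriction $H^*(BT;\C)\to H^*(BN;\C)$. Both fall out of Pontryagin duality applied to $1\to N\to T\to K\to 1$: the dual sequence $0\to \hat K\to \hat T\to \hat N\to 0$ of discrete abelian groups splits after tensoring with $\C$, and taking $\Sym_\C$ produces an isomorphism of $\C$-algebras
\[
H^*(BT;\C) \;\cong\; H^*(BK;\C)\otimes_\C H^*(BN;\C).
\]
Here I use that, for a compact abelian group $A$, the Serre spectral sequence for $BA_0\to BA\to B\pi_0 A$ collapses over $\C$ (since $H^*(B\pi_0 A;\C)=\C$), giving $H^*(BA;\C)\cong \Sym_\C(\hat A\otimes\C)$. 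The decomposition above displays a $\C$-basis $\{u_i\}$ of $H^*(BN;\C)$ lifted from $H^*(BT;\C)$.

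Finally I would invoke Leray--Hirsch: the classes $j^*(u_i) \in H^*_T(X)$ restrict on each fiber of $h$ to the chosen basis of $H^*(BN;\C)$, so $H^*_T(X)$ is free over $H^*_K(X)$ on the $j^*(u_i)$, and combining this with the decomposition above identifies the resulting isomorphism with $\Phi$. The principal technical obstacle is the usual one: Leray--Hirsch is most cleanly stated over a connected base, whereas $EK\times_K X$ may have several components for a finite $K$-CW complex with several orbit types; this is handled by arguing componentwise, since the fiber $BN$ and the classes $j^*(u_i)$ are constant across components. If one prefers to bypass such bookkeeping, the Eilenberg--Moore spectral sequence for the homotopy pullback collapses at $E_2$ by flatness of $H^*(BT;\C)$ over $H^*(BK;\C)$ as established above, delivering the same conclusion.
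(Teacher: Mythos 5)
Your argument is correct, and it is essentially the approach used in the source the paper cites for this fact: Chen's thesis \cite{Chen}, Proposition~2.3.4, proves it via the Eilenberg--Moore spectral sequence for exactly this pullback square (as the paper itself reveals in the proof of Theorem~\ref{iso}), which you offer as the alternative route and correctly identify as equivalent to the Leray--Hirsch argument once freeness of $H^*(BT;\C)$ over $H^*(BK;\C)$ is established. Two minor observations: since $K$ is a quotient of the torus $T$, it is itself automatically a (connected) torus, so the reduction through $BA_0\to BA\to B\pi_0 A$ is not actually needed here; and the version of Leray--Hirsch you invoke has a fiber $BN$ whose cohomology is nonzero in infinitely many degrees, which is slightly beyond the most common textbook statement but causes no convergence trouble, since the Serre $E_2$-page is first-quadrant and each total degree receives only finitely many contributions.
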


\begin{remark}
By Proposition \ref{changeh}, we have an induced diagram of isomorphisms of graded $H_T$-algebras  
\begin{equation}\label{pullback2}
\begin{tikzcd}
H^*_G(X) \otimes_{H_G} H_K \otimes_{H_K} H_T \,\, \ar[d] \ar[r,"{f^* \cup g^* \otimes \id}"] & \, \, H^*_K(X) \otimes_{H_K} H_T \ar[d,"{h^* \cup j^*}"] \\
H^*_G(X) \otimes_{H_G} H_T \ar[r,"{(f\circ h)^* \cup j^*}" ] & H_T(X)
\end{tikzcd}
\end{equation}
where the left vertical map is the canonical map induced by $p^*: H_K \to H_T$.
\end{remark}

\begin{lemma}\label{kurt}
The diagram \eqref{pullback2} commutes.
\end{lemma}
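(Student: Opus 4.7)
The plan is to chase a generic elementary tensor around both legs of the square and reduce the required equality to the commutativity $p \circ j = g \circ h$ of the upper (pullback) square in the standing diagram. Fix $\alpha \in H^*_G(X)$, $\beta \in H_K$, and $\gamma \in H_T$, and consider $\alpha \otimes \beta \otimes \gamma$ in the upper-left corner.

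First I would compute the right-then-down composite. The top horizontal arrow sends
\[
\alpha \otimes \beta \otimes \gamma \;\longmapsto\; (f^*\alpha \cup g^*\beta) \otimes \gamma \;\in\; H^*_K(X) \otimes_{H_K} H_T,
\]
and then $h^* \cup j^*$ sends the result to
\[
h^*(f^*\alpha \cup g^*\beta) \cup j^*\gamma \;=\; (f \circ h)^*\alpha \;\cup\; (g \circ h)^*\beta \;\cup\; j^*\gamma \;\in\; H_T(X),
\]
using that pullback is a ring homomorphism.

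Next I would compute the down-then-right composite. The left vertical map is the canonical isomorphism induced by $p^*: H_K \to H_T$, which on an elementary tensor gives
\[
\alpha \otimes \beta \otimes \gamma \;\longmapsto\; \alpha \otimes (p^*\beta)\cdot \gamma.
\]
The bottom map $(f\circ h)^* \cup j^*$ then sends this to
\[
(f\circ h)^*\alpha \;\cup\; j^*\bigl((p^*\beta)\,\gamma\bigr) \;=\; (f\circ h)^*\alpha \;\cup\; (p \circ j)^*\beta \;\cup\; j^*\gamma,
\]
again because $j^*$ is multiplicative.

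Comparing the two outputs, they agree if and only if $(g\circ h)^*\beta = (p\circ j)^*\beta$ for every $\beta \in H_K$, and this is immediate from the commutativity $g\circ h = p\circ j$ of the upper pullback square. Since elementary tensors generate, the whole square commutes. There is no real obstacle: the identity is a purely formal consequence of functoriality of cohomology and the commutativity of the top square. The only bookkeeping to be careful about is that $H_T$ carries two relevant module structures here, namely as an $H_K$-module via $p^*$ and as an $H_G$-module via the composite $H_G \to H_K \xrightarrow{p^*} H_T$, and these are exactly what make the left vertical collapse map well-defined.
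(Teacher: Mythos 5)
Your argument is correct and is essentially the same as the paper's: chase an elementary tensor around both legs, use multiplicativity of pullbacks, and reduce the identity to $g\circ h = p\circ j$, which is the commutativity of the upper pullback square. Nothing to add.
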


\begin{proof}
It suffices to show that diagram \eqref{pullback2} commutes for an element of the form $a \otimes b \otimes c$. We have
\[
\begin{tikzcd}
a \otimes b \otimes c \ar[r,mapsto] \ar[d,mapsto]& (f^*a \cup g^*b) \otimes c \ar[d,mapsto] \\
a \otimes (p^*b \cup c) \ar[r,mapsto] & (f\circ h)^*a \cup j^*(p^*b \cup c) = h^*(f^*a \cup g^*b) \cup j^*c,
\end{tikzcd}
\]
where equality holds since 
\[
\begin{array}{rcl}
(f\circ h)^*a \cup j^*(p^*b \cup c) &=& (f\circ h)^*a \cup (p\circ j)^*b \cup j^*c \\
&=& (f\circ h)^*a \cup (g\circ h)^*b \cup j^*c \\
&=& h^*f^*a \cup h^*g^*b \cup j^*c \\
&=& h^*(f^*a \cup g^*b) \cup j^*c.
\end{array}
\]
\end{proof}

\begin{remark}\label{remmie}
Let $\t_\C^\vee$ denote the dual vector space of $\t_\C$. The Chern-Weil isomorphism
\[
\Sym^*(\t_\C^\vee) \cong H^{2*}(BT;\C) 
\]
identifies the ring $H_T$ with the ring of polynomial functions on $\t_\C$. Note that the generators of the polynomial ring correspond to cohomology classes of degree $2$. The map is produced as follows. Since $T$ is a torus, there is an identification 
\[
\hat{T} \otimes \C \cong \Hom(\check{T},\Z) \otimes \C \cong \t_\C^\vee.
\]
Let $\C_\lambda$ be the representation corresponding to an irreducible character $\lambda \in \hat{T}$. The map  
\[
\lambda \mapsto c_1(ET \times_T \C_\lambda)  
\]
induces an isomorphism $\hat{T} \cong H^2(BT;\Z)$, where $c_1$ denotes the first Chern class. Tensoring this map with $\C$ and extending by the symmetric product yields the isomorphism $\Sym^*(\t_\C^\vee) \cong H^*(BT;\C)$. See Proposition 2.6 in \cite{Rosu03} for details.
\end{remark}

\begin{definition}\label{bochum}
Let $T$ be a torus, let $X$ be a finite $T$-CW complex, and let $\O_{\t_\C}$ be the holomorphic structure sheaf of the complex manifold $\t_\C$. We denote by $\H^*_T(X)$ the $\Z/2\Z$-graded holomorphic sheaf of $\O_{\t_\C}$-algebras whose value on an analytic open set $U \subset \t_\C$ is
\[
H^*_T(X) \otimes_{H_T} \O_{\t_\C}(U).
\]
The tensor product is over the map $H_T \to \O_{\t_\C}(U)$ which identifies an element of $H_T$ with its corresponding polynomial, considered as a function on $U \subset \t_\C$. Equipped with the restriction maps of $\O_{\t_\C}$, it follows from Propositions 2.8 and 2.10 in \cite{Rosu03} that $\H^*_T(X)$ is a sheaf, and not just a presheaf. We write $\H^*_T(X)_V$ for the sheaf obtained by restricting $\H^*_T(X)$ to a subset $V \subset \t_\C$. 
\end{definition}

\begin{remark}\label{ainur}
Although it will remain somewhat in the background until we reach Section \ref{brew}, we mention at this point that $\H^*_T(X)$ may be equipped with a $\C^\times$-equivariant structure such that the $\Z/2\Z$-grading on $\H^*_T(X)$ corresponds to the eigenspaces of $\{\pm 1\} \subset \C^\times$. Consider the action map $\alpha: \C^\times \times \t_\C \to \t_\C$ given by $\alpha(\lambda, x) = \lambda^2 x$, and the projection $\pi: \C^\times \times \t_\C \twoheadrightarrow \t_\C$. Let $\pi_{23}: \C^\times \times \C^\times \times \t_\C \twoheadrightarrow \C^\times \times \t_\C$ be the projection along the first factor, and let $\mu:\C^\times \times \C^\times \to \C^\times$ be the group multiplication map. An equivariant structure on $\H^*_T(X)$ is the datum of an isomorphism $I: \alpha^* \H^*_T(X) \to \pi^*\H_T(X)$ which satisfies the cocycle condition 
\[
(\pi_{23})^*I \circ (\id_{\C^\times} \times \alpha)^*I =  (\mu \times \id_{\t_\C})^*I.
\]
Then, for a class $c \in H^n(ET \times_T X;\C)$ of degree $n \in \Z$, an open set $U \subset \C^\times \times \t_\C$, and holomorphic functions $f \in \O_{\t_\C}(\alpha(U))$ and $h \in \O_{\C^\times \times \t_\C}(U)$, we define the isomorphism $I$ by
\[
\begin{array}{rcl}
\H^*_T(X)(\alpha(U)) \otimes_{\O_{\t_\C}(\alpha(U))} \O_{\C^\times \times \t_\C}(U) &\rightarrow& \H^*_T(X)(\pi(U)) \otimes_{\O_{\t_\C}(\pi(U))} \O_{\C^\times \times \t_\C}(U) \\
c \otimes f \otimes h &\mapsto & c \otimes 1 \otimes (\alpha^*f) h \lambda^n
\end{array}
\]
where $\lambda$ is the composite map $U \subset \C^\times \times \t_\C \twoheadrightarrow \C^\times$. Note that if $c \in H^{2n}(BT;\C)$ corresponds via the Chern-Weil isomorphism to a homogeneous polynomial $p_c$ of degree $n$, then $c \otimes 1 \otimes 1 = 1 \otimes p_c \otimes 1$. So, we check that $I$ is well-defined as follows
\[
\begin{array}{rcl}
I(c \otimes 1 \otimes 1) &=& c \otimes 1 \otimes \lambda^{2n} \\
&=& 1 \otimes 1 \otimes (\pi^* p_c) \lambda^{2n}\\
&=& 1 \otimes 1 \otimes \alpha^* p_c \\
&=& I(1 \otimes p_c \otimes 1),
\end{array}
\]
where the third equality holds by definition of $\alpha$. Finally, it is straightforward to check that this definition of $I$ satisfies the cocycle condition, and we leave this to the reader.
\end{remark}

\begin{proposition}\label{Segall}
Let 
\[
\pi : T \twoheadrightarrow K
\]
be a surjective map of compact abelian groups, and let $X$ be a $K$-CW complex. The natural map
\[
(\Lie(\pi)_\C)^*\, \H^*_{K}(X) \longrightarrow  \H^*_T(X)
\]
is an isomorphism of $\Z/2\Z$-graded $\O_{\t_\C}$-algebras.
\end{proposition}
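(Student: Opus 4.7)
The strategy is to compute sections on sufficiently small open sets and reduce the claim to the algebraic base-change identity in Proposition \ref{changeh}. Recall that $\Lie(\pi)_\C: \t_\C \to \Lie(K)_\C$ is a surjective $\C$-linear map whose pullback on rings of polynomial functions agrees, under the Chern--Weil isomorphism of Remark \ref{remmie}, with the map $H_K \to H_T$ induced by $B\pi$.

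First I would unwind the pullback $f^*\H^*_K(X)$ for $f := \Lie(\pi)_\C$: by definition $f^*\H^*_K(X) = f^{-1}\H^*_K(X) \otimes_{f^{-1}\O_{\Lie(K)_\C}} \O_{\t_\C}$, so for an open set $U \subset \t_\C$ with $f(U) \subset V \subset \Lie(K)_\C$, sections are
\[
H^*_K(X) \otimes_{H_K} \O_{\Lie(K)_\C}(V) \otimes_{\O_{\Lie(K)_\C}(V)} \O_{\t_\C}(U) \cong H^*_K(X) \otimes_{H_K} \O_{\t_\C}(U),
\]
the right-hand tensor being taken over the composite $H_K \to H_T \to \O_{\t_\C}(U)$. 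Next I would apply Proposition \ref{changeh} and tensor with $\O_{\t_\C}(U)$ over $H_T$ to obtain
\[
H^*_K(X) \otimes_{H_K} \O_{\t_\C}(U) \cong H^*_K(X) \otimes_{H_K} H_T \otimes_{H_T} \O_{\t_\C}(U) \cong H^*_T(X) \otimes_{H_T} \O_{\t_\C}(U),
\]
which is $\H^*_T(X)(U)$ by Definition \ref{bochum}. Since the isomorphism of Proposition \ref{changeh} is induced by $h^* \cup j^*$ and is natural in $X$, the composite identification coincides with the natural map of the statement.

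This identification is pointwise an isomorphism of $\Z/2\Z$-graded $\O_{\t_\C}(U)$-algebras, because Proposition \ref{changeh} respects both the grading and the algebra structure and base change preserves these. The most delicate point -- the main obstacle, such as it is -- is verifying that the presheaf computation above really yields the pullback sheaf $f^*\H^*_K(X)$ rather than a presheaf needing sheafification. This follows from two observations: tensor product commutes with the filtered colimit defining $f^{-1}\H^*_K(X)$ over opens $V \supset f(U)$, and the Rosu references cited in Definition \ref{bochum} guarantee that $H^*_K(X) \otimes_{H_K} \O_{\Lie(K)_\C}$ is already a sheaf. With this in hand, the argument reduces entirely to the finite-dimensional base-change identity of Proposition \ref{changeh} and presents no further difficulty.
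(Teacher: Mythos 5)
Your argument is correct and follows exactly the paper's route: the paper's proof is a one-liner stating that the result ``follows immediately from Proposition \ref{changeh} by extending to holomorphic sheaves,'' and you have simply spelled out the details of that extension (unwinding the pullback sheaf, applying the base-change isomorphism $H^*_K(X) \otimes_{H_K} H_T \cong H^*_T(X)$, and tensoring with $\O_{\t_\C}$). Your observation about the presheaf/sheafification issue is a reasonable and careful addition, but it does not constitute a different approach.
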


\begin{proof}
This follows immediately from Proposition \ref{changeh} by extending to holomorphic sheaves.
\end{proof}

\begin{definition}
Let $x \in \t_\C$. The inclusion of a closed subgroup $K \subset T$ induces an inclusion of complex Lie algebras $\mathrm{Lie}(K)_\C \subset \t_\C$. Let $T(x)$ denote the intersection
\[
\bigcap_{x \in \mathrm{Lie}(K)_\C} K
\]
of all closed subgroups $K \subset T$ whose complexification contains $x$. For a finite $T$-CW complex $X$, denote by $X^{x}$ the subspace of points fixed by $T(x)$.
\end{definition}

We now state a well known fixed-point theorem for Borel-equivariant cohomology, for which a proof may be found at Theorem 2.2.18 in \cite{Spong}. Note that, if $\F$ is a sheaf on a space $S$ and $x$ is a point in $S$, then we write $\F_x$ for the stalk of $\F$ at $x$. Thus, the symbol $\O_{\t_\C,x}$ denotes the ring of germs of holomorphic functions at $x$.

\begin{theorem}\label{localisationn}
Let $x \in \t_\C$ and $X$ be a finite $T$-CW complex. The restriction along $X^x \hookrightarrow X$ induces an isomorphism
\[
\H^*_T(X)_x \longrightarrow \H^*_T(X^x)_x
\]
of $\Z/2\Z$-graded $\O_{\t_\C,x}$-algebras.
\end{theorem}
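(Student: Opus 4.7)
The plan is to induct on the number of $T$-cells of $X$, reducing the general case via a Mayer--Vietoris argument to the case of a single orbit $X = T/K$.

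For the base case, Proposition \ref{change1} gives $H^*_T(T/K) \cong H^*_K$, and the Chern--Weil identification of Remark \ref{remmie} presents the structure map $H_T \to H^*_K$ as restriction of polynomial functions along $\mathrm{Lie}(K)_\C \hookrightarrow \t_\C$. Consequently
\[
\H^*_T(T/K)_x \cong \O_{\t_\C,x} / I,
\]
where $I$ is the ideal of germs vanishing on $\mathrm{Lie}(K)_\C$. I then split into two cases according to the equivalence
\[
T(x) \subset K \; \iff \; x \in \mathrm{Lie}(K)_\C,
\]
which holds because for closed subgroups of a torus the Lie algebra of an intersection is the intersection of Lie algebras, forcing $x \in \mathrm{Lie}(T(x))_\C$. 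If $T(x) \subset K$, then $(T/K)^x = T/K$ and the restriction map is tautologically the identity. Otherwise $(T/K)^x = \emptyset$ because $T$ is abelian (so the stabilizer of every point of $T/K$ equals $K$), making the target vanish; and since $x \notin \mathrm{Lie}(K)_\C$, there is a polynomial on $\t_\C$ vanishing on $\mathrm{Lie}(K)_\C$ but nonzero at $x$, which becomes a unit in $\O_{\t_\C,x}$ and forces $I = \O_{\t_\C,x}$, so the source vanishes as well.

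For the inductive step I cover a skeleton $X^n$ equivariantly as $U \cup V$, where $U$ is a $T$-invariant open neighborhood that deformation retracts onto $X^{n-1}$, and $V$ is a tubular neighborhood of the open top cells, so that $V \simeq_T \bigsqcup_i T/K_i$ and $U \cap V \simeq_T \bigsqcup_i T/K_i \times S^{n-1}$. The Mayer--Vietoris sequence in Borel-equivariant cohomology, tensored with $\O_{\t_\C,x}$ over $H_T$, sits in a commutative ladder with the analogous sequence for $X^x = U^x \cup V^x$ (which is again an open cover, since fixed-point formation commutes with union and intersection). Applying the inductive hypothesis to each of $U$, $V$, and $U \cap V$ and invoking the five lemma then gives the statement for $X^n$.

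The main obstacle is justifying that the Mayer--Vietoris sequence remains exact after tensoring with $\O_{\t_\C,x}$ over $H_T$, i.e.\ that $\O_{\t_\C,x}$ is flat as an $H_T$-module. This is standard: $\O_{\t_\C,x}$ sits between the algebraic localization $(H_T)_{\mathfrak{m}_x}$ and its $\mathfrak{m}_x$-adic completion, both of which are flat over the Noetherian polynomial ring $H_T$, and flatness is preserved in this intermediate extension. Once flatness is granted, the inductive argument runs cleanly and the theorem follows.
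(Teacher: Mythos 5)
The paper does not prove this statement itself; it defers to Theorem~2.2.18 of the author's thesis \cite{Spong}. The strategy you propose---settle the orbit case $T/K$ by hand, then climb via a Mayer--Vietoris five-lemma argument using flatness of $\O_{\t_\C,x}$ over $H_T$---is the standard Atiyah--Bott-style argument for such holomorphic localization results and is in all likelihood the one carried out there. Your treatment of the orbit case, including the dichotomy based on whether $T(x) \subset K$ (equivalently $x \in \mathrm{Lie}(K)_\C$) and the observation that the structure map $H_T \to H_K$ is restriction of polynomial functions, is correct.

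There are two places where the write-up as given has a genuine gap. First, the induction is announced as being on the number of $T$-cells, but the piece $U \cap V \simeq_T \bigsqcup_i T/K_i \times S^{n-1}$ in your cover need not have fewer $T$-cells than $X^n$: even with the minimal CW structure on $S^{n-1}$, each index $i$ contributes two $T$-cells to $U \cap V$, while $X^n$ may add only one cell to $X^{n-1}$. The fix is to induct on the dimension of the $T$-CW complex (or to double-induct on dimension and then number of top cells); then $U$, $V$, and $U\cap V$ all have $T$-CW dimension strictly less than $n$ and the hypothesis applies to each. Second, your justification of flatness is not sound as stated: the assertion that ``flatness is preserved in this intermediate extension'' is not a valid general principle---$A \subset B \subset C$ with $A$ and $C$ flat over $R$ does not imply $B$ flat over $R$. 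The fact you need is true, but the correct route is faithfully flat descent: $\O_{\t_\C,x}$ and $(H_T)_{\mathfrak{m}_x}$ are Noetherian local rings with the same $\mathfrak{m}_x$-adic completion, $\O_{\t_\C,x} \to \widehat{\O}_{\t_\C,x}$ is faithfully flat, and $\widehat{(H_T)_{\mathfrak{m}_x}}$ is flat over $H_T$, so $\O_{\t_\C,x}$ is flat over $H_T$. This is exactly Proposition~2.8 of Rosu--Knutson \cite{Rosu03}, which the paper already cites for precisely this exactness property; you should invoke it rather than improvise the justification.
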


\section{Grojnowski's equivariant elliptic cohomology}\label{Grojn}

There are already many accounts of the construction of Grojnowski's equivariant elliptic cohomology theory (e.g. \cite{Ando}, \cite{Chen}, \cite{Nora}, \cite{GKV}, \cite{Groj}, and \cite{Rosu}). Nonetheless, we sketch a brief description of the torus-equivariant version here because it is important for our main results. In this section, we fix an elliptic curve $E_t = \C/\Lambda_t$. Recall the quotient map 
\[
\zeta_{T,t}: \t_\C \twoheadrightarrow E_{T,t} \cong \check{T} \otimes E_t
\]
of Remark \ref{fad}. In this paper, we will use the following definition of a (reduced) $T$-equivariant elliptic cohomology theory.

\begin{definition}\label{tear}
A \textit{reduced $T$-equivariant elliptic cohomology theory associated to $E_t$} consists of data:
\begin{enumerate}
\item a contravariant functor $\F^*_{T,t}$ from the category of pointed finite $T$-CW complexes into the category $\Coh(E_{T,t})$ of $\Z/2\Z$-graded, coherent $\O_{E_{T,t}}$-algebras; and
\item \textit{Suspension isomorphism:} an isomorphism $\F^{*+1}_{T,t}(S^1\wedge X) \cong \F^{*}_{T,t}(X)$, natural in $X$. 
\end{enumerate}
The functor $\F^*_{T,t}$ must satisfy the following conditions:
\begin{enumerate}
\item \textit{Homotopy invariance:} two $T$-homotopic maps induce the same maps on $\F^*_{T,t}$;
\item \textit{Exactness:} applied to a cofiber sequence of finite $T$-CW complexes, $\F^*_{T,t}$ yields an exact sequence in $\Coh(E_{T,t})$; and
\item \textit{Additivity:} for $X$ equal to the wedge sum of spaces $X_i$ over all $i \in A$, the canonical map $\F^*_{T,t}(X) \to \prod_{i \in A} \F_{T,t}^*(X_i)$ is an isomorphism of $\Z/2\Z$-graded, coherent $\O_{E_{T,t}}$-algebras.
\end{enumerate}
\end{definition}

At the end of this section, we show that the reduced version of Grojnowski's theory $\widetilde{\G}_{T,t}^*$ satisfies Definition \ref{tear}.

\begin{definition}\label{teed}
Let $a \in E_{T,t}$. Define $T(a)$ as the intersection
\[
T(a) = \bigcap_{a \in E_{K,t}} K
\]
of closed subgroups $K \subset T$. For a $T$-CW complex $X$, denote by $X^a$ the subspace of points fixed by $T(a)$.
\end{definition}

\begin{remark}
If $\S$ is a finite set of closed subgroups of $T$, we can define an ordering on the points of $E_{T,t}$ by saying that $a \leq_\S b$ if $b \in E_{K,t}$ implies $a \in E_{K,t}$, for any $K \in \S$. If $\S$ is understood, then we just write $\leq$ for $\leq_\S$. 
\end{remark}

\begin{definition}\label{ruff}
If $X$ is a finite $T$-CW complex, let $\S(X)$ be the finite set of isotropy subgroups of $X$. If $f: X \to Y$ is a map of finite $T$-CW complexes, let $\S(f)$ be the finite set of isotropy subgroups which occur in either $X$ or $Y$. An open set $U$ in $E_{T,t}$ is \textit{small} if $\zeta_{T,t}^{-1}(U)$ is a disjoint union of connected components $V$ such that $V \cong U$ via $\zeta_{T,t}$.
\end{definition}

\begin{definition}\label{ross}
Let $\S$ be a finite set of closed subgroups of $T$. An open cover 
\[
\U = \{U_a\}_{a \in E_{T,t}}
\]
of $E_{T,t}$, which is indexed by the points of $E_{T,t}$, is said to be \textit{adapted to $\S$} if it has the following properties:
\begin{enumerate}
\item $a \in U_{a}$, and $U_{a}$ is small.
\item If $U_{a} \cap U_{b} \neq \emptyset$, then either $a \leq_\S b$ or $b \leq_\S a$.
\item If $a \leq_\S b$ and there exists $K \in \S$ such that $a \in E_{K,t}$ and $b \notin E_{K,t}$, then $U_b \cap E_{K,t} = \emptyset$.
\item Let $a$ and $b$ lie in $E_{K,t}$ for some $K \in \S$. If $U_a \cap U_b \neq \emptyset$, then $a$ and $b$ belong to the same connected component of $E_{K,t}$.
\end{enumerate}
\end{definition}

The following result is Theorem 2.2.8 in \cite{Chen}.

\begin{lemma}\label{constru}
For any finite set $\S$ of subgroups of $T$, there exists an open cover $\U$ of $E_{T,t}$ adapted to $\S$. Any refinement of $\U$ is also adapted to $\S$.
\end{lemma}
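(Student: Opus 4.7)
The plan is to define each neighborhood $U_a$ as a metric ball about $a$ whose radius is tuned to the distance from $a$ to the ``forbidden'' subvarieties $E_{K,t}$ with $K \in \S$ and $a \notin E_{K,t}$. Equip $\t_\C$ with any translation-invariant flat metric and push it forward via the covering map $\zeta_{T,t}$ to a flat metric $d$ on $E_{T,t}$. For each $a \in E_{T,t}$ set $\S'(a) := \{K \in \S : a \in E_{K,t}\}$ and
\[
m(a) := \min_{K \in \S \setminus \S'(a)} d(a, E_{K,t}) \;\in\; (0,\infty],
\]
where the minimum over the empty set is taken to be $+\infty$. Fix a single constant $\epsilon_0 > 0$ small enough that (i) $\epsilon_0$ is less than the injectivity radius of $\zeta_{T,t}$, and (ii) $\epsilon_0$ is less than half the minimum distance between distinct connected components of $E_{K,t}$, taken over all $K \in \S$; such an $\epsilon_0$ exists because $\S$ is finite and each $E_{K,t}$ is a closed subset of the compact space $E_{T,t}$ with finitely many components. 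Define
\[
\epsilon(a) := \tfrac{1}{2} \min(m(a), \epsilon_0), \qquad U_a := \{ x \in E_{T,t} \mid d(a, x) < \epsilon(a) \}.
\]

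Conditions (1), (3), and (4) of Definition \ref{ross} then follow by direct checks. For (1), each $U_a$ is an open ball of radius at most $\epsilon_0/2$, hence small in the sense of Definition \ref{ruff} by (i). For (3), whenever $b \notin E_{K,t}$ one has $d(b, E_{K,t}) \geq m(b) \geq 2\epsilon(b)$, so $U_b \cap E_{K,t} = \emptyset$. For (4), the ball $U_a$ meets at most one component of each $E_{K,t}$ by (ii), within which it is a ball in a flat affine subspace and hence connected; moreover, if $a, b \in E_{K,t}$ and $U_a \cap U_b \neq \emptyset$, then $d(a,b) < \epsilon(a) + \epsilon(b) \leq \epsilon_0$, which is smaller than any inter-component distance, placing $a$ and $b$ in the same component.

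The main technical point is condition (2). Given $c \in U_a \cap U_b$, suppose for contradiction that $\S'(a)$ and $\S'(b)$ are incomparable, and pick $K \in \S'(a) \setminus \S'(b)$ and $K' \in \S'(b) \setminus \S'(a)$. Since $a \in E_{K,t}$, the triangle inequality yields
\[
d(b, E_{K,t}) \leq d(b, c) + d(c, a) < \epsilon(b) + \epsilon(a),
\]
while $K \notin \S'(b)$ forces $d(b, E_{K,t}) \geq 2\epsilon(b)$; hence $\epsilon(b) < \epsilon(a)$. Applying the symmetric argument to $K'$ yields $\epsilon(a) < \epsilon(b)$, a contradiction. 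The refinement statement is immediate: shrinking each $U_a$ to any open neighborhood $U_a' \subseteq U_a$ of $a$ preserves (1), (3), and (4) directly, and the inclusion $U_a' \cap U_b' \subseteq U_a \cap U_b$ propagates (2). The main obstacle is calibrating $\epsilon(a)$ against $m(a)$ with the right constant so that the symmetric inequalities yield the contradiction in the proof of (2); once this scaling is set up, the rest is bookkeeping.
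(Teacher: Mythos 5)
Your proof is correct, and it is essentially the same approach the paper takes for its analogous Lemma \ref{construu} (an open cover of $T\times T$ adapted to $\S$), which is itself based on Proposition 2.5 of Rosu--Knutson; for the statement as given, the paper just cites Chen's thesis. The core idea in all versions is the same: centre an open ball at each point with radius roughly half the distance to the ``forbidden'' subvarieties. Your write-up has two mild improvements over the paper's version of the argument: the uniform constant $\epsilon_0$ explicitly forces both smallness (condition 1) and the inter-component separation needed for condition 4, which the paper leaves implicit; and your verification of condition 2 via the symmetric contradiction $\epsilon(b)<\epsilon(a)$ and $\epsilon(a)<\epsilon(b)$ is a slight repackaging of the paper's direct disjointness argument $U_a\cap U_b=\emptyset$ (there one instead bounds $r_a+r_b\le d(a,b)$ using $d(a,D')\le d(a,b)$). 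Both routes are valid and of the same difficulty.
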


\begin{notation}
For $a \in E_{T,t}$ let 
\[
\mathrm{tr}_a: E_{T,t} \longrightarrow E_{T,t}
\]
denote translation by $a$. 
\end{notation}

\begin{remark}\label{canon}
Let $X$ be a finite $T$-CW complex and let $\U$ be a cover of $E_{T,t}$ which is adapted to $\S(X)$. Let $x \in \zeta_{T,t}^{-1}(a)$, and let $V_{x}$ be the component of $\zeta_{T,t}^{-1}(U_a)$ containing $x$. Let $V \subset V_x$ and $U \subset U_a$ be open subsets such that $V \cong U$ via $\zeta_{T,t}$. Since $U_a \in \U$ is small by the first property of an adapted cover, the map $\zeta_{T,t}$ induces an isomorphism of complex analytic spaces $V - x \cong U-a$. We may therefore consider the composite ring map
\begin{equation}\label{ring}
H_T \hookrightarrow \O_{\t_\C}(V-x) \cong \O_{E_{T,t}}(U - a).
\end{equation}
\end{remark}

\begin{definition}
Let $X$ be a finite $T$-CW complex. For each $U_a \in \U(X)$, define a sheaf $\G^*_{T,t}(X)_{U_a}$ of $\Z/2\Z$-graded $\O_{U_a}$-algebras which takes the value
\[
H^*_T(X^a) \otimes_{H_T} \O_{E_{T,t}}(U-a),
\]
on $U \subset U_a$ open, with restriction maps given by restriction of holomorphic functions. The tensor product is defined over \eqref{ring}, and the $\O_{U_a}$-algebra structure is given by multiplication by $t_a^* f$ for $f \in \O_{U_a}(U)$. The grading is induced by the odd and even grading on the cohomology ring.
\end{definition}

\begin{remark}
For a finite $T$-CW complex $X$, we have defined a sheaf on each patch $U_a$ of a cover $\U$ adapted to $\S(X)$. The next task is to glue the local sheaves together on nonempty intersections $U_a \cap U_b$ in a compatible way. To do this, we need to define gluing maps 
\[
\phi_{b,a}: \G^*_{T,t}(X)_{U_a}|_{U_a \cap U_b} \cong  \G^*_{T,t}(X)_{U_b}|_{U_a \cap U_b}
\]
for each such intersection, such that the cocycle condition $\phi_{c,b} \circ \phi_{b,a} = \phi_{c,a}$ is satisfied whenever $U_a \cap U_b \cap U_c \neq \emptyset$.
\end{remark}

Note that we have either $X^b \subset X^a$ or $X^a \subset X^b$ whenever $U_{a} \cap U_{b} \neq \emptyset$, by the second property of an adapted cover.

\begin{theorem}
Let $X$ be a finite $T$-CW complex, and let $\U$ be a cover adapted to $\S(X)$. Let $a \leq b$ be points in $E_{T,t}$ and assume $U \subset U_{a} \cap U_{b}$ is an open subset. By the second property of an adapted cover, we may assume that $X^b \subset X^a$, with inclusion map $i_{b,a}$. The map 
\[
i_{b,a}^*\otimes \id: H^*_T(X^a) \otimes_{H_T} \O_{E_{T,t}}(U-a) \to H^*_T(X^b) \otimes_{H_T} \O_{E_{T,t}}(U-a)
\]
induced by restriction along $i_{b,a}$ is an isomorphism of $\O_{E_{T,t}}(U)$-modules.
\end{theorem}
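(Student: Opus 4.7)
The plan is to pass to stalks on $\t_\C$ via a small chart and then apply the fixed-point localization theorem (Theorem~\ref{localisationn}). Since $U_a$ is small, fix a lift $x \in \zeta_{T,t}^{-1}(a)$ and open sets $V \subset V_x$ and $U \subset U_a$ with $V \cong U$ via $\zeta_{T,t}$, as in Remark~\ref{canon}. The induced isomorphism $\O_{E_{T,t}}(U-a) \cong \O_{\t_\C}(V-x)$ lets us identify the map in the statement with the sheaf morphism
\[
\H^*_T(X^a)|_{V-x} \longrightarrow \H^*_T(X^b)|_{V-x}
\]
on $V - x \subset \t_\C$, induced by $i_{b,a}: X^b \hookrightarrow X^a$. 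Showing that this is an isomorphism of sheaves reduces to a stalkwise check at each $w \in V - x$.

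At such a $w$, naturality of Theorem~\ref{localisationn} produces a commutative square
\[
\begin{tikzcd}
\H^*_T(X^a)_w \ar[r] \ar[d, "\sim"'] & \H^*_T(X^b)_w \ar[d, "\sim"] \\
\H^*_T((X^a)^w)_w \ar[r] & \H^*_T((X^b)^w)_w
\end{tikzcd}
\]
with vertical localization isomorphisms, so it suffices to verify the equality $(X^a)^w = (X^b)^w$ of subspaces of $X$. Writing $(X^c)^w = X^{T(c)\cdot T(w)}$ for $c \in \{a,b\}$, where $T(c)\cdot T(w)$ denotes the subgroup generated by $T(c)$ and $T(w)$, the inclusion $(X^b)^w \subseteq (X^a)^w$ follows at once from $T(a) \subseteq T(b)$, which is exactly the assumption $a \leq b$.

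For the reverse inclusion, which is the crux of the proof, take $p \in (X^a)^w$ with isotropy $T_p \in \S(X)$, so $T_p \supseteq T(a)$ and $T_p \supseteq T(w)$. Using that $T(c)$ realizes the minimum of $\{K \subseteq T : c \in E_{K,t}\}$, and the analogous statement for $T(w)$, these inclusions translate into the geometric conditions $a \in E_{T_p, t}$ and $w \in \Lie(T_p)_\C$. Consequently the intermediate point
\[
\bar w := \zeta_{T,t}(w+x) = a + \zeta_{T,t}(w)
\]
belongs to $U \subset U_b$ and, as a sum of two elements of $E_{T_p, t}$, also to $E_{T_p, t}$. If $b \notin E_{T_p, t}$, property~(3) of the adapted cover $\U$ (Definition~\ref{ross}) would force $U_b \cap E_{T_p, t} = \emptyset$, contradicting $\bar w \in U_b \cap E_{T_p, t}$. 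Therefore $b \in E_{T_p, t}$, so $T_p \supseteq T(b)$ and $p \in (X^b)^w$, completing the proof.

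The main obstacle is the dictionary ``$T_p \supseteq T(c)$ iff $c \in E_{T_p, t}$'' (which depends on $T(c)$ actually being achieved as the minimal subgroup $K$ with $c \in E_{K,t}$) and the correct placement of the intermediate point $\bar w$ so that property~(3) of the adapted cover can deliver the required contradiction; once these are in place, the combinatorics of the adapted cover do the essential work.
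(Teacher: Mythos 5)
Your proposal is correct, and fixed-point localization is indeed the strategy behind the result the paper cites (Chen's Theorem 2.3.3); the reduction to showing $(X^a)^w = (X^b)^w$ stalkwise and the use of property~(3) of an adapted cover to force $b \in E_{T_p,t}$ are exactly the key steps. One small imprecision worth flagging: the easy inclusion $(X^b)^w \subseteq (X^a)^w$ is not quite ``at once from $T(a) \subseteq T(b)$,'' because the relation $a \leq_{\S(X)} b$ only quantifies over $K \in \S(X)$, whereas $T(a) \subseteq T(b)$ would require the implication $b \in E_{K,t} \Rightarrow a \in E_{K,t}$ for \emph{all} closed subgroups $K$. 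The correct route is the one already implicit in what you wrote for the reverse inclusion: for any $p \in X$ the isotropy group $T_p$ lies in $\S(X)$, so $a \leq_{\S(X)} b$ gives $X^b \subseteq X^a$ directly, and intersecting both sides with $X^{T(w)}$ gives $(X^b)^w \subseteq (X^a)^w$. This does not affect the validity of the argument.
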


\begin{proof}
See the proof of Theorem 2.3.3 in \cite{Chen}.
\end{proof}

\begin{remark}
Let $T(a,b) = \langle T(a),T(b) \rangle$ and let $U \subset U_a \cap U_b$ be an open subset. There is a natural isomorphism of $\Z/2\Z$-graded $\O(U)$-algebras given on $U \subset U_a$ by the composite
\begin{equation}\label{perry}
\begin{array}{rcl}
H^*_T(X^a) \otimes_{H_T} \O_{E_{T,t}}(U-a)& \xrightarrow{i_{b,a}^*\otimes \id}& H^*_T(X^b) \otimes_{H_T} \O_{E_{T,t}}(U-a) \\
&\longrightarrow& H^*_{T/T(a,b)}(X^b) \otimes_{H_{T/T(a,b)}} \O_{E_{T,t}}(U-a) \\
&\xrightarrow{\id \otimes \mathrm{tr}_{b-a}^*}& H^*_{T/T(a,b)}(X^b) \otimes_{H_{T/T(a,b)}} \O_{E_{T,t}}(U-b) \\
&\longrightarrow& H^*_T(X^b) \otimes_{H_T} \O_{E_{T,t}}(U-b),
\end{array}
\end{equation}
where the second and final maps are the change of group maps of Proposition \ref{changeh}. Denote the composite map by $\phi_{b,a}$.
\end{remark}

\begin{remark}
Our construction of the gluing maps $\phi_{b,a}$ differs from the construction given in Chen's thesis \cite{Chen}. Namely, in Proposition 2.3.4 and Definition 2.3.5 of that thesis, the gluing maps $\phi_{b,a}$ are defined much as in \eqref{perry}, only the third map is written as
\begin{equation}\label{sg}
H^*_{T/T(b)}(X^b) \otimes_{H_{T/T(b)}} \O_{E_{T,t}}(U-a) \xrightarrow{\id \otimes \mathrm{tr}_{b-a}^*} H^*_{T/T(b)}(X^b) \otimes_{H_{T/T(b)}} \O_{E_{T,t}}(U-b).
\end{equation}
However, such a map is not well defined, because $\mathrm{tr}_{b-a}^*$ does not in general preserve the $H_{T/T(b)}$-algebra structure, because $b-a$ is not always contained in $E_{T(b),t}$. To see this, suppose that $X$ is equal to a point, so that $\S(X) = \{T\}$, and let $a = [t_1/2]$ and $b = [0]$. Then $T(a) = \Z/2\Z$ and $T(b) = 1$ and $b - a$ is equal to $[-t_1/2]$, which is not in $E_{T(b),t} = 0$. \par

It is for this reason that we construct the gluing maps $\phi_{b,a}$ as in \eqref{perry}, using the change of groups map associated to $T \rightarrow T/T(a,b)$, instead of $T \rightarrow T/T(b)$. By definition of $T(a)$ and $T(b)$, we have that $b-a \in E_{T(a,b),t}$, and it follows that $\mathrm{tr}_{b-a}^*: \O(U-a) \to \O(U-b)$ is a map of $H_{T/T(a,b)}$-algebras, which fixes the problem. It is also important to note that $T(a,b)$ acts trivially on $X^b$, since $X^b \subset X^a$.
\end{remark}

The following result may be proved in the same way that Proposition 2.3.7 is proved in \cite{Chen}.

\begin{proposition}
The collection of maps $\{\phi_{b,a}\}$ satisfies the cocycle condition
\[
\phi_{c,b} \circ \phi_{b,a} = \phi_{c,a}
\]
whenever $U_a \cap U_b \cap U_c \neq \emptyset$.
\end{proposition}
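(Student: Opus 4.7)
The plan is to reduce to a comparable chain of fixed-point sets, use functoriality to commute maps past each other, and then apply Lemma \ref{kurt} to collapse a redundant intermediate descent.

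First, by the second property of an adapted cover, any two of $a$, $b$, $c$ are comparable under $\leq_{\S(X)}$, so after relabelling we may assume $a \leq b \leq c$. Then $X^c \subset X^b \subset X^a$ and the inclusions satisfy $i_{c,b} \circ i_{b,a} = i_{c,a}$. Since $T(x)$ is by definition the smallest closed subgroup $K$ of $T$ with $x \in E_{K,t}$, the chain $a \leq b \leq c$ forces $T(a) \subseteq T(b) \subseteq T(c)$, so that $T(a,b) = T(b)$, $T(b,c) = T(c)$ and $T(a,c) = T(c)$. Hence the intermediate quotients appearing in $\phi_{b,a}$, $\phi_{c,b}$ and $\phi_{c,a}$ are $T/T(b)$, $T/T(c)$ and $T/T(c)$ respectively. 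We then expand $\phi_{c,b} \circ \phi_{b,a}$ as an eight-arrow composite according to \eqref{perry} and compare it to the four-arrow composite $\phi_{c,a}$.

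Three routine functorialities reduce the eight arrows to four: restrictions compose, $i_{c,b}^* \circ i_{b,a}^* = i_{c,a}^*$; translations compose on holomorphic functions, $\mathrm{tr}_{c-b}^* \circ \mathrm{tr}_{b-a}^* = \mathrm{tr}_{c-a}^*$; and the change-of-group isomorphism of Proposition \ref{changeh} is natural in the space variable, so $i_{c,b}^*$ commutes past both the ``down'' and ``up'' change-of-group maps, while the translations act only on the $\O$-factor and commute past the change-of-group maps trivially. Using naturality we push $i_{c,b}^*$ to the far left, where it combines with $i_{b,a}^*$ to yield $i_{c,a}^*$, and slide the two translations together to yield $\mathrm{tr}_{c-a}^*$.

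The main obstacle is the leftover middle section, which descends to $T/T(b)$, climbs back to $T$, and then descends to $T/T(c)$, whereas $\phi_{c,a}$ simply descends to $T/T(c)$ directly. To eliminate this detour we invoke Lemma \ref{kurt} applied to the chain of surjections $T \twoheadrightarrow T/T(b) \twoheadrightarrow T/T(c)$, which exists because $T(b) \subseteq T(c)$. The lemma says precisely that the two-step change-of-group isomorphism factoring through $T/T(b)$ agrees with the one-step change-of-group isomorphism from $T$ to $T/T(c)$, and passing to inverses yields the analogous statement for the ``down'' maps. This identifies the middle section with the single descent to $T/T(c)$ appearing in $\phi_{c,a}$, completing the verification. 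The compatibility of each translation with the $H_{T/T(c)}$-algebra structure at every stage is automatic because $b-a \in E_{T(b),t} \subset E_{T(c),t}$, which is precisely why the construction uses $T(a,b)$ rather than $T(b)$ alone.
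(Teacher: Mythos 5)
There is a genuine gap: the central claim that $a \leq_{\S} b \leq_{\S} c$ forces $T(a) \subseteq T(b) \subseteq T(c)$ is false, and the rest of your argument rests on it. The relation $a \leq_{\S} b$ only says that for $K \in \S$, membership $b \in E_{K,t}$ implies $a \in E_{K,t}$; since $T(a)$ is the intersection over \emph{all} closed subgroups with $a \in E_{K,t}$, not just those in $\S$, you cannot conclude $T(a) \subset T(b)$. The paper's own remark following \eqref{perry} provides the counterexample: $X = \pt$, $\S(X) = \{T\}$, $a = [t_1/2]$, $b = [0]$, where $T(a) = \Z/2\Z \not\subset 1 = T(b)$ even though $a \leq_{\S} b$. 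Consequently $T(a,b) = T(b)$ and $T(a,c) = T(c)$ are both wrong in general, so your application of Lemma \ref{kurt} to the chain $T \twoheadrightarrow T/T(b) \twoheadrightarrow T/T(c)$ does not apply (there need be no such surjections). Your final sentence compounds the error by asserting $b - a \in E_{T(b),t}$, which is exactly the statement the paper flags as false — indeed this is the reason the paper replaced Chen's $T(b)$ by $T(a,b)$ in the gluing maps in the first place.

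The strategy of composing restrictions and translations, and collapsing a redundant intermediate quotient via Lemma \ref{kurt}, is sound; the fix is to choose the correct common refinement. Set $T(a,b,c) := \langle T(a), T(b), T(c) \rangle$. Since $X^c \subset X^b \subset X^a$, the group $T(a,b,c)$ acts trivially on $X^c$, and each of $T(a,b)$, $T(b,c)$, $T(a,c)$ is contained in $T(a,b,c)$, so the three translations $\mathrm{tr}^*_{b-a}$, $\mathrm{tr}^*_{c-b}$, $\mathrm{tr}^*_{c-a}$ are all $H_{T/T(a,b,c)}$-linear. Then Lemma \ref{kurt}, applied separately to the towers $T \twoheadrightarrow T/T(a,b) \twoheadrightarrow T/T(a,b,c)$, $T \twoheadrightarrow T/T(b,c) \twoheadrightarrow T/T(a,b,c)$, and $T \twoheadrightarrow T/T(a,c) \twoheadrightarrow T/T(a,b,c)$, shows that each $\phi$ can equivalently be written with all change-of-group maps passing through $T/T(a,b,c)$. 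Once this normalization is in place, your commuting of $i_{c,b}^*$ past the translations, the composition $i^*_{c,b} \circ i^*_{b,a} = i^*_{c,a}$, and the composition of translations $\mathrm{tr}^*_{c-b} \circ \mathrm{tr}^*_{b-a} = \mathrm{tr}^*_{c-a}$ go through as you describe.
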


\begin{definition}
We denote by $\G^*_{T,t}(X)$ the sheaf of $\Z/2\Z$-graded $\O_{E_{T,t}}$-algebras which is obtained by gluing together the sheaves $\G^*_{T,t}(X)_{U_a}$ via the maps $\phi_{b,a}$. 
\end{definition}

\begin{remark}
Up to isomorphism, the sheaf $\G^*_{T,t}(X)$ does not depend on the choice of $\U$ since any refinement of $\U$ is also adapted to $\S(X)$. More explicitly, given two covers $\U$ and $\U'$ adapted to $\S(X)$, one may take the common refinement $\U''$ and consider the theory defined using $\U''$. The resulting theory is then naturally isomorphic to those theories corresponding to $\U$ and $\U'$, since the maps induced by the refinement are isomorphisms on stalks.
\end{remark}

\begin{definition}
Define the reduced theory $\widetilde{\G}^*_{T,t}$ on a pointed, finite $T$-CW complex $X$ to be the kernel
\[
\ker(\G^*_{T,t}(X) \to \G^*_{T,t}(\pt))
\]
of the map induced by the inclusion of the basepoint $\pt \hookrightarrow X$.
\end{definition}

The following is adapted from Theorem 2.3.8 in \cite{Chen}. We reproduce the proof here as it is important for our main results.

\begin{proposition}\label{kerry}
Let $X$ be a pointed, finite $T$-CW complex. The assignment $X \mapsto \widetilde{\G}^*_{T,t}(X)$ defines a reduced $T$-equivariant elliptic cohomology theory in the sense of Definition \ref{tear}.
\end{proposition}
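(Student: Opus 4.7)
The plan is to verify each clause of Definition~\ref{tear} by reducing it to the analogous property of reduced Borel-equivariant cohomology applied to the $T(a)$-fixed-point subspaces, and then checking that the reduction is compatible with the gluing maps $\phi_{b,a}$. The key formal input is that, for a pointed $T$-CW complex $X$ with $T$-fixed basepoint and any closed subgroup $K \subset T$, the fixed-point functor $X \mapsto X^K$ preserves pointed $T$-CW structure, cofiber sequences, smash products with trivially-acted spaces, and wedge sums.

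First I would extend $\widetilde{\G}^*_{T,t}$ to a contravariant functor. Given a pointed equivariant map $f: X \to Y$, pick a cover $\U$ adapted to $\S(f)$ via Lemma~\ref{constru}. For each $a \in E_{T,t}$, equivariance of $f$ gives $f(X^a) \subset Y^a$, and the pullback $(f|_{X^a})^* : H^*_T(Y^a) \to H^*_T(X^a)$ tensors up to a map $\G^*_{T,t}(Y)_{U_a} \to \G^*_{T,t}(X)_{U_a}$. Naturality of the change-of-groups isomorphism of Proposition~\ref{changeh} and of restriction to fixed points ensures compatibility with the gluing data in~\eqref{perry}, so the local maps assemble to $f^* : \G^*_{T,t}(Y) \to \G^*_{T,t}(X)$, which descends to the kernels to give the reduced pullback. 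Coherence of $\widetilde{\G}^*_{T,t}(X)$ is immediate: since $X^a$ is a finite $T$-CW complex, $H^*_T(X^a)$ is finitely generated over $H_T$, so each local sheaf $\G^*_{T,t}(X)_{U_a}$ is coherent, and both the gluing construction and the passage to the kernel of a map of coherent $\O_{E_{T,t}}$-algebras preserve coherence.

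Three of the remaining axioms will follow easily on each patch $U_a$ and then globally by gluing. For homotopy invariance, a $T$-homotopy between equivariant maps restricts to an ordinary homotopy on each fixed-point subspace, so the induced map on $H^*_T(X^a)$ is unchanged, and hence so is the induced sheaf map. The suspension isomorphism uses $(S^1 \wedge X)^a = S^1 \wedge X^a$ together with the standard reduced Borel suspension isomorphism, which tensors up and is compatible with the gluing data since these involve only pullback along $i_{b,a}$ and an $H_T$-linear translation of the structure sheaf. Additivity for a finite wedge $X = \bigvee_i X_i$ follows from $X^a = \bigvee_i X_i^a$ and the splitting of reduced Borel cohomology of a wedge into a direct sum; naturality in $X$ of all constructions ensures that the gluing maps respect this decomposition.

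The main obstacle is the exactness axiom for a cofiber sequence $A \hookrightarrow X \to X/A$ of pointed finite $T$-CW complexes. Applying $(-)^a$ yields a cofiber sequence $A^a \hookrightarrow X^a \to X^a/A^a = (X/A)^a$, and reduced Borel-equivariant cohomology produces a six-term exact sequence of $H_T$-modules in $\Z/2\Z$-graded parity. To lift this to exactness of sheaves on $U_a$, I need the map $H_T \to \O_{E_{T,t}}(U-a)$ of~\eqref{ring} to be flat, so that tensoring preserves exactness. This map factors through the localization of $H_T \cong \Sym^*(\t_\C^\vee)$ at the maximal ideal of polynomials vanishing at a lift of $a$, followed by the analytic localization to holomorphic functions on a small ball, and both of these are flat. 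Granting this, the six-term exact sequence of local sheaves glues to an exact sequence of $\O_{E_{T,t}}$-modules, and taking kernels along the basepoint map preserves exactness at the middle term by a routine diagram chase. Collecting these verifications establishes Definition~\ref{tear} for $\widetilde{\G}^*_{T,t}$.
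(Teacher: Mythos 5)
Your proposal follows essentially the same route as the paper: reduce each axiom to the corresponding property of Borel-equivariant cohomology on the fixed-point spaces $X^a$, tensor up with holomorphic functions, and check the local data glues. The one substantive variation is that you verify exactness and additivity on the open patches $U_a$, whereas the paper checks them on stalks, where the flatness input is simply that $\O_{\t_\C,0}$ is flat over $H_T$. Your patchwise version needs flatness of $\O_{\t_\C}(V-x)$ over $H_T$, which is true (it is Proposition 2.8 of Rosu--Knutson, already cited in the paper), but your stated justification does not work: the restriction map $H_T \to \O_{\t_\C}(V-x)$ does \emph{not} factor through the algebraic localization of $H_T$ at the maximal ideal of a lift of $a$, since an element $p/q$ with $q(x)\neq 0$ need not be holomorphic on all of $V-x$ (the denominator $q$ may vanish elsewhere in the ball). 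Either invoke Rosu--Knutson directly, or pass to stalks as the paper does, where $\O_{\t_\C,0}$ really is obtained by localization and completion and flatness is immediate.
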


\begin{proof}
The sheaf $\widetilde{\G}^*_{T,t}(X)$ is coherent because $X$ is a finite $T$-CW complex, and $\G^*_{T,t}(X)$ may be computed locally using cellular cohomology. It is also a $\Z/2\Z$-graded $\O_{E_{T,t}}$-algebra, by construction. We show that the construction of $\widetilde{\G}^*_{T,t}(X)$ is functorial in $X$. Let $f: X \to Y$ be a map of pointed finite $T$-CW complexes and let $\U$ be a cover of $E_{T,t}$ which is adapted to $\S(f)$. For $a \in E_{T,t}$, the map $f$ induces a map $f_a: X^a \to Y^a$ by restriction. This induces a map 
\[
f_a^* \otimes \id: H^*_T(Y^a) \otimes_{H_T} \O(U - a) \to H^*_T(X^a) \otimes_{H_T} \O(U-a)
\]
for each $U \subset U_a$, which clearly commutes with the restriction maps of the sheaf. It is evident that the collection of such maps for all $a \in E_{T,t}$ glue well, and that identity maps and composition of maps are preserved, by the functoriality of Borel-equivariant cohomology and naturality of the isomorphism of Proposition \ref{changeh}. Thus, a map $f: X \to Y$ of pointed complexes gives rise to a unique map $f^*: \G^*_{T,t}(Y) \to \G^*_{T,t}(X)$, which induces a map on the kernels defining the reduced theory. It is straightforward to show that composition of maps and identity maps are preserved. 

Define a suspension isomorphism $\sigma: \widetilde{\G}^{*+1}_{T,t}(S^1 \wedge X) \to \widetilde{\G}^{*}_{T,t}(X)$ by gluing the maps 
\[
\sigma_a \otimes \id: \widetilde{H}^{*+1}_T(S^1 \wedge X^a) \otimes_{H^*_T} \O(U - a) \to \widetilde{H}^{*}_T(X^a) \otimes_{H^*_T} \O(U-a),
\]
where $\sigma_a$ is the suspension isomorphism of Borel-equivariant cohomology. The maps $\sigma_a \otimes \id$ glue well since $\sigma_a$ is natural, from which it also follows that $\sigma$ is natural. 

The properties of exactness and additivity may be checked on stalks
\[
\widetilde{\G}^*_{T,t}(X)_a = \widetilde{H}^*_T(X^a) \otimes_{H_T} \O_{E_{T,t},0}.
\]
This is clear, since Borel-equivariant cohomology satisfies these properties, and tensoring with $\O_{E_{T,t},0} \cong \O_{\t_\C,0}$ is exact. Finally, homotopy invariance follows from the homotopy invariance of Borel equivariant cohomology.
\end{proof}

\section{The $\C^\times \times \SL_2(\Z)$-equivariant complex manifold $E_T$}\label{water}

In this section, we work out the details of the picture sketched by Rezk in Section 2.12 of \cite{Rezk} (see also Etingof and Frenkel \cite{EF}). Namely, we construct a $\C^\times \times \SL_2(\Z)$-equivariant complex manifold $E_T$ as an equivariant fiber bundle over $\X^+$, such that the fiber over $t$ is equal to $E_{T,t} = \check{T} \otimes E_t$. The manifold $E_T$ will be the base space of the $\C^\times \times \SL_2(\Z)$-equivariant sheaf $\E_T(X)$ that we construct in Section \ref{brew}.

\begin{remark}
Consider the semidirect product group 
\[
\SL_2(\Z) \ltimes \bT^2
\]
where $\SL_2(\Z)$ acts on $\bT^2 = \R^2/\Z^2$ by left multiplication. The group operation is given by 
\[
(A',t')(A,t) = (A'A, A^{-1}t' + t),
\]
so that the inverse of $(A,t)$ is $(A^{-1},-At)$. We may think of $\SL_2(\Z) \ltimes \bT^2$ as the group of orientation-preserving diffeomorphisms 
\[
\begin{array}{rcl}
(A,t): \bT^2 &\longrightarrow & \bT^2 \\
s &\longmapsto & As + t.
\end{array}
\]
Let $L^2T$ be the topological group of smooth maps $\bT^2 \to T$, with group multiplication defined pointwise. A diffeomorphism $(A,s)$ acts on a loop $\gamma \in L^2T$ from the left by
\[
(A,t)\cdot \gamma(s) = \gamma(A^{-1}s - At).
\]
\end{remark}

\begin{definition}
Following \cite{Rezk}, define the \textit{extended double loop group of $T$} as the semidirect product 
\[
\widetilde{L^2T} := (\SL_2(\Z) \ltimes \bT^2) \ltimes L^2T
\]
with group operation 
\begin{equation}\label{cute}
(A',t',\gamma'(s))(A,t,\gamma(s)) = (A'A, A^{-1}t' + t, \gamma'(As+ t)+ \gamma(s)).
\end{equation}
One may think of an element $(A,t,\gamma) \in \widetilde{L^2T}$ as the automorphism 
\[
\begin{tikzcd}
\bT^2 \times T \ar[r,"{\phi}"] \ar[d,two heads] & \bT^2 \times T \ar[d,two heads] \\
\bT^2 \ar[r,"{(A,t)}"] & \bT^2
\end{tikzcd}
\]
covering the diffeomorphism $(A,t)$ of $\bT^2$, where $\phi(r,s)$ is equal to $\gamma(r) + s$. It is easily verified that the inverse of $(A,t,\gamma(s))$ is equal to $(A^{-1},-At,- \gamma(A^{-1}s-At))$.
\end{definition}

\begin{remark}\label{wei1}
For a finite $T$-CW complex $X$, the extended double loop group $\widetilde{L^2T}$ acts on the double loop space 
\[
L^2X := \Map(\bT^2,X) 
\] 
by
\begin{equation}
(A,t,\gamma(s))\cdot f(s) = \gamma(A^{-1}s- At)\cdot f(A^{-1}s-At)
\end{equation}
for all $(A,t,\gamma) \in \widetilde{L^2T}$ and $f \in L^2X$. One may also think of this action in the following way. Consider the mapping space $\Map_T(\bT^2 \times T,X)$ of $T$-equivariant maps from the trivial $T$-bundle over $\bT^2$ into $X$. There is a homeomorphism
\[
\Map_T(\bT^2 \times T,X) \xrightarrow{\sim} \Map(\bT^2,X) = L^2X
\]
given by restriction to the subspace $\bT^2 \times \{0\} \subset \bT^2 \times T$. The induced action of $(A,t,\gamma) \in \widetilde{L^2T}$ on the equivariant mapping space is given by pulling back along the commutative diagram
\[
\begin{tikzcd}
\bT^2 \times T \ar[r,"{\phi}"] \ar[d,two heads] & \bT^2 \times T \ar[d,two heads] \ar[r] & X\\
\bT^2 \ar[r,"{(A,t)}"] & \bT^2,
\end{tikzcd}
\]
where $\phi(r,s) = \gamma(r) + s$. Thus we see that the action groupoid $\widetilde{L^2T} \backslash\!\!\backslash L^2X$ is isomorphic to the subgroupoid of $\Map(\bT^2,T \backslash\!\!\backslash X)$ wherein the $T$-bundle over $\bT^2$ is trivial.
\end{remark}

\begin{definition}\label{wei}
Consider the subgroup 
\[
\bT^2 \times T \subset \bT^2 \ltimes L^2T
\]
where the translations $\bT^2$ act trivially on the subgroup of constant loops $T \subset L^2T$. One sees that this is a maximal torus in $\widetilde{L^2T}$ by noting that the intersection of $\SL_2(\Z)$ with the identity component of $\widetilde{L^2T}$ is trivial, and that a nonconstant loop in $L^2T$ does not commute with $\bT^2$. Let $N_{\widetilde{L^2T}}(\bT^2 \times T)$ be the normaliser of $\bT^2 \times T$ in $\widetilde{L^2T}$. The Weyl group associated to $\bT^2 \times T \subset \widetilde{L^2T}$ is defined as
\[
W_{\widetilde{L^2T}} = W_{\widetilde{L^2T}}(\bT^2 \times T) := N_{\widetilde{L^2T}}(\bT^2 \times T)/(\bT^2 \times T).
\]
\end{definition}

\begin{remark}
In the following proposition, we will consider the subgroup
\[
\SL_2(\Z) \ltimes \check{T}^2 \subset \widetilde{L^2T}
\]
where $m \in \check{T}^2$ is identified with the loop $\gamma(s) = ms \in L^2T$ via
\[
\check{T}^2 := \Hom(\bT,T)^2 \cong \Hom(\bT^2,T) \subset L^2T.
\]
The group operation, induced by that of $\widetilde{L^2T}$, is given by
\[
(A',m')(A,m) = (A'A, m'A +m) \in \SL_2(\Z) \ltimes \check{T}^2,
 \]
and the inverse of $(A,m)$ is given by $(A^{-1},-mA^{-1})$. 
\end{remark}

\begin{proposition}
The subgroup $\SL_2(\Z) \ltimes \check{T}^2 \subset \widetilde{L^2T}$ is contained in $N_{\widetilde{L^2T}}(\bT^2 \times T)$, and the composite map
\[
\SL_2(\Z) \ltimes \check{T}^2 \hookrightarrow N_{\widetilde{L^2T}}(\bT^2\times T) \twoheadrightarrow W_{\widetilde{L^2T}}(\bT^2 \times T)
\]
is an isomorphism.
\end{proposition}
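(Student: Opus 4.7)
The plan is to establish three facts in turn: that $\SL_2(\Z) \ltimes \check{T}^2$ normalizes $\bT^2 \times T$, that the composite into $W_{\widetilde{L^2T}}$ is injective, and that it is surjective. The key tool throughout is direct computation with the group law \eqref{cute} in $\widetilde{L^2T}$ and the resulting inversion formula. For normalization, I would take $(A,m) \in \SL_2(\Z) \ltimes \check{T}^2$, which corresponds to $(A,0,ms) \in \widetilde{L^2T}$, and compute its conjugation action on a generic element $(I,s_0,x_0) \in \bT^2 \times T$. Using the group law, the conjugate works out to $(I, As_0,\, m s_0 + x_0)$, where the loop-component collapses to the constant $m s_0$ precisely because $\gamma(s) = ms$ is a group homomorphism. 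Since this lies in $\bT^2 \times T$, we obtain the inclusion into the normalizer.

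For injectivity of the composite, the kernel equals $(\SL_2(\Z) \ltimes \check{T}^2) \cap (\bT^2 \times T)$. An element $(A,m)$ lies in $\bT^2 \times T$ only if $A = I$ and the loop $s \mapsto ms$ is constant; but $ms$ is $\Z$-linear in $s$ with $m \cdot 0 = 0$, so constancy forces $m = 0$. To set up surjectivity, I would first absorb the translation and constant term of an arbitrary $(A,t,\gamma) \in N_{\widetilde{L^2T}}(\bT^2 \times T)$ by writing
\[
(A,t,\gamma) \;=\; (I, At, \gamma(0)) \cdot (A, 0, \tilde\gamma), \qquad \tilde\gamma(s) := \gamma(s) - \gamma(0),
\]
which one checks directly using \eqref{cute} (noting that $x \in T$ embeds as a constant loop, so $x(As) = x$). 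The left factor lies in $\bT^2 \times T$, so it suffices to show the remaining factor $(A,0,\tilde\gamma)$ lies in $\SL_2(\Z) \ltimes \check{T}^2$; note $\tilde\gamma(0) = 0$ and $(A,0,\tilde\gamma)$ still normalizes $\bT^2 \times T$.

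The heart of the argument is then the conjugation calculation for $(A,0,\tilde\gamma)$. Using the stated inverse, $(A,0,\tilde\gamma)^{-1} = (A^{-1}, 0, -\tilde\gamma(A^{-1}s))$, two applications of \eqref{cute} give
\[
(A,0,\tilde\gamma)(I,s_0,x_0)(A,0,\tilde\gamma)^{-1} \;=\; \bigl(I,\, As_0,\, \tilde\gamma(A^{-1}s + s_0) - \tilde\gamma(A^{-1}s) + x_0\bigr).
\]
Membership in $\bT^2 \times T$ requires the loop component to be independent of $s$ for every $s_0 \in \bT^2$; substituting $r = A^{-1}s$ and using $\tilde\gamma(0) = 0$, this forces the identity $\tilde\gamma(r + s_0) = \tilde\gamma(r) + \tilde\gamma(s_0)$. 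Hence $\tilde\gamma$ is a continuous group homomorphism $\bT^2 \to T$, which under $\Hom(\bT^2,T) \cong \Hom(\bT,T)^2 = \check{T}^2$ corresponds to a unique $m \in \check{T}^2$ with $\tilde\gamma(s) = ms$. This puts $(A,0,\tilde\gamma)$ in $\SL_2(\Z) \ltimes \check{T}^2$, completing surjectivity. The main obstacle is really only the bookkeeping around the semidirect-product group law; the conceptual content is that normalizing $\bT^2 \times T$ forces the non-translational, non-constant content of a loop to be a genuine homomorphism of $\bT^2$.
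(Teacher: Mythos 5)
Your proof is correct and follows essentially the same path as the paper's: both hinge on the same conjugation computation in $\widetilde{L^2T}$, which forces the non-translational part of a normalizing loop to be a homomorphism $\bT^2 \to T$. The only organizational difference is that you prove injectivity (via the kernel $(\SL_2(\Z)\ltimes\check{T}^2) \cap (\bT^2\times T)$) and surjectivity (via the explicit decomposition $(A,t,\gamma) = (I,At,\gamma(0))\cdot(A,0,\tilde\gamma)$) separately, whereas the paper constructs the inverse map $[g]\mapsto(A,\gamma)$ directly.
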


\begin{proof}
Let $(A,m) \in \SL_2(\Z) \ltimes \check{T}^2$. A straightforward calculation using \eqref{cute} shows that 
\[
(A,0,m) (1,r,t) (A^{-1},0,-mA^{-1}) = (1,Ar,t + mr) \in \bT^2 \times T,
\]
which proves the first assertion. For the second assertion, it suffices to define an inverse to the composite map of the proposition. Let $g$ be an arbitrary element in $N_{\widetilde{L^2T}}(\bT^2\times T)$ and let $[g]$ be its image in $W_{\widetilde{L^2T}}$. By definition of $W_{\widetilde{L^2T}}$, we may translate $g$ by elements of $\bT^2$, and act on $g$ by constant loops, without changing $[g]$. Therefore, there exists $\gamma \in L^2T$ with $\gamma(0,0) = 1$ such that 
\[
[g] = [(A,0,\gamma)] \in W_{\widetilde{L^2T}},
\]
for some $A \in \SL_2(\Z)$. We will now show that $\gamma \in \check{T}^2$, and finally that $[g] \mapsto (A,\gamma)$ is a well defined inverse to the composite map. Using \eqref{cute} again, for any $(r,t) \in \bT^2 \times T$, we have
\begin{equation}\label{afff}
(A,0,\gamma(s)) (r,t)(A,0,- \gamma(s)) = (1,Ar,\gamma(r+A^{-1}s)+ t - \gamma(A^{-1}s))  \in \bT^2 \times T.
\end{equation}
It follows that $\gamma(r+A^{-1}s) - \gamma(A^{-1}s)$ does not depend on $s$. Thus,
\[
\gamma(r+A^{-1}s)- \gamma(A^{-1}s) = \gamma(r)
\]
for all $r,s \in \bT^2$, and setting $s = As'$ shows that $\gamma(r)+ \gamma(s') = \gamma(r+s')$ for all $r,s' \in \bT^2$. Therefore, $\gamma$ is a group homomorphism, which means that it lies in $\check{T}^2$. The map $[g] \mapsto (A,\gamma)$ is well defined, since $g \in \bT^2 \times T$ allows us to choose $A = 1$ and $\gamma = 1$, and is evidently a group homomorphism which is inverse to the composite map of the proposition. This completes the proof.
\end{proof}

\begin{remark}\label{ulmo}
It follows directly from equation \eqref{afff} that the action of $W_{\widetilde{L^2T}}$ on $\bT^2 \times T$ is given by
\[
(A,m) \cdot (r,t) = (Ar,t+mr).
\]
The induced action of $W_{\widetilde{L^2T}}$ on the complexified Lie algebra $\C^2 \times \t_\C$ is given by the same formula, in which case we write it as
\[
(A,m) \cdot (t,x) = (At,x+mt).
\]
We consider the $\C^\times$-action on $\C^2\times \t_\C$ given by
\[
\lambda \cdot (t,x) = (\lambda^2 t, \lambda^2 x),
\]
which will be important to the construction of the equivariant sheaf in Section \ref{brew} (compare also the map $\alpha$ in Remark \ref{ainur}). Note that the $\C^\times$-action on $\C^2\times \t_\C$ commutes with the $W_{\widetilde{L^2T}}$-action.
\end{remark}

\begin{remark}\label{melkor}
Recall from Section \ref{bread} the compex analytic space 
\[
\X^+ := \{(t_1,t_2) \in \C^2 \, |\, \mathrm{Im}(t_1/t_2) > 0, t_2 \neq 0\}.
\]
It is straightforward to verify that the action of $\C^\times \times W_{\widetilde{L^2T}}$ on $\C^2 \times \t_\C$ given in the previous remark preserves $\X^+ \times \t_\C$. Furthermore, the action of $\check{T}^2$ is free and properly discontinuous. The quotient map
\[
\zeta_T: \X^+ \times \t_\C \twoheadrightarrow \check{T}^2 \backslash (\X^+ \times \t_\C),
\]
is therefore a covering map of complex manifolds.
\end{remark}

\begin{remark}
Consider again the complex manifold
\[
E_T := \check{T}^2 \backslash (\X^+ \times \t_\C)
\]
from the previous remark. Since $\check{T}^2$ is a normal subgroup of $\C^\times \times W_{\widetilde{L^2T}}$, there is a residual $\C^\times \times \SL_2(\Z)$-action which descends to $E_T$. We denote by $\M_T$ the quotient stack
\[
\M_T := [(\C^\times \times \SL_2(\Z)) \backslash\!\!\backslash E_T].
\]
Recall the moduli stack $\M$ of elliptic curves from Remark \ref{dominic}. We view $\M_T$ as being equipped with the map $\M_T \to \M$ which is induced by the $\C^\times \times \SL_2(\Z)$-equivariant projection $E_T \twoheadrightarrow \X^+$. The fiber of $E_T$ over $t \in \X^+$ is given by 
\[
(\check{T}t_1 + \check{T}t_2)\backslash \t_\C = \check{T} \otimes (\Lambda_t\backslash \C) = \check{T}^2 \otimes E_t = E_{T,t},
\]
and the restriction of $\zeta_T$ to the fiber over $t \in \X^+$ is the map
\[
\zeta_{T,t}: \t_\C \twoheadrightarrow E_{T,t}
\]
of Remark \ref{fad}. 
\end{remark}

\begin{remark}\label{grut}
Recall that $t \in \X^+$ determines an identification $\R^2 \cong \R t_1 + \R t_2 = \C$, which we denote by $\xi_t$. The maps $\xi_t$ assemble over all $t \in \X^+$ to give an isomorphism
\[
\xi: \X^+ \times \R^2 \longrightarrow \X^+ \times \C
\]
of real vector bundles over $\X^+$. Recall that $E_t = \Lambda_t \backslash \C$, by definition. The free $\Z^2$-action on $\R^2$ corresponds via $\xi$ to the natural action of $\Lambda_t$ on $\C$, so that $\xi$ induces a commutative diagram
\begin{equation}\label{seer}
\begin{tikzcd}
\X^+ \times \R^2 \ar[r,"\sim","\xi"'] \ar[d,two heads,"\id \times \exp_{\bT^2}"]& \X^+ \times \C \ar[d, two heads, "{\zeta}" ]\\
\X^+ \times \bT^2 \ar[r,"\sim","\chi"'] & E
\end{tikzcd}
\end{equation} 
of real manifolds, where $\chi$ is the map induced by $\xi$ on $\Z^2$-orbits. Recall the definitions of Remark \ref{fad} and let $K$ be an arbitrary compact abelian group. We apply the functor $\Hom(\hat{K},-)$ in a fiberwise manner to diagram \eqref{seer}, to yield the commutative diagram
\begin{equation}\label{deer}
\begin{tikzcd}
\X^+ \times \Lie(K) \times \Lie(K) \ar[r,"\sim","\xi_K"'] \ar[d,two heads,"\id \times \exp_{K\times K}"]& \X^+ \times \Lie(K)_\C \ar[d, two heads, "{\zeta_{K}}" ]\\
\X^+ \times K \times K \ar[r,"\sim","\chi_K"'] & E_{K}.
\end{tikzcd}
\end{equation} 
If $K$ is a torus $T$, then we may achieve the same result by instead applying $\check{T} \otimes - $ fiberwise to diagram \eqref{seer}. Diagram \eqref{deer} is clearly natural in $K$.
\end{remark}


\section{An open cover of $E_T$ adapted to $X$}\label{soup}

In this section, we prove several technical results which will mainly be used to obtain a local description of the sheaf $\H^*_{\bT^2 \times T}(L^2X)$ in Section \ref{low}. We begin by defining, for a finite $T$-CW complex $X$, an open cover of the compact Lie group $T \times T$ adapted to $X$. We show that such a cover exists, and that it induces an open cover of the total space $E_T$ via the isomorphism $\chi_T: \X^+ \times T \times T \cong E_T$. We also show that the restriction of the cover to $E_{T,t}$ is adapted to $X$ in the sense of Definition \ref{ross}. Finally, we establish some properties of the open cover which will be useful in later sections, and we give an example of a cover adapted to the representation sphere $S_\lambda$. 

\begin{definition}
If $\S$ is a finite set of closed subgroups of $T$, we can define a relation on the points of $T\times T$ by saying that $(a_1,a_2) \leq_\S (b_1,b_2)$ if $(b_1,b_2) \in K \times K$ implies $(a_1,a_2) \in K\times K$, for any $K \in \S$. This relation is obviously reflexive and transitive, but not symmetric.
\end{definition}

\begin{definition}\label{rachel}
We use the notation of Definition \ref{ruff}, and say that an open set $U$ in $T \times T$ is \textit{small} if $\exp_{T\times T}^{-1}(U)$ is a disjoint union of connected components $V$ such that $\exp_{T\times T}|_V: V \to U$ is a bijection for each $V$. An open cover $\U = \{U_{a_1,a_2}\}$ of $T \times T$ indexed by the points of $T\times T$ is said to be \textit{adapted to $\S$} if it has the following properties:
\begin{enumerate}
\item $(a_1,a_2) \in U_{a_1,a_2}$, and $U_{a_1,a_2}$ is small.
\item If $U_{a_1,a_2} \cap U_{b_1,b_2} \neq \emptyset$, then either $(a_1,a_2) \leq_\S (b_1,b_2)$ or $(b_1,b_2) \leq_\S (a_1,a_2)$.
\item If $(a_1,a_2) \leq_\S (b_1,b_2)$, and for some $K \in \S$, we have $(a_1,a_2) \in K \times K$ but $(b_1,b_2) \notin K\times K$, then $U_{b_1,b_2} \cap K \times K = \emptyset$.
\item Let $(a_1,a_2)$ and $(b_1,b_2)$ lie in $K \times K$ for some $K \in \S$. If $U_{a_1,a_2} \cap U_{b_1,b_2} \neq \emptyset$, then $(a_1,a_2)$ and $(b_1,b_2)$ belong to the same connected component of $K\times K$.
\end{enumerate}
If $\U$ is adapted to $\S = \S(X)$ then we say that $\U$ is adapted to $X$. If $\S = \S(f)$ and $\U$ is adapted to $\S$ then we say that $\U$ is adapted to $f$. If $\S$ is understood, then we just write $\leq$ for $\leq_\S$. 
\end{definition}

Our proof of the following result is based on the proof of Proposition 2.5 in \cite{Rosu03}.

\begin{lemma}\label{construu}
For any finite set $\S$ of subgroups of $T$, there exists an open cover $\U$ of $E_T$ adapted to $\S$. Any refinement of $\U$ is also adapted to $\S$.
\end{lemma}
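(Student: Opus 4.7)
The plan is to construct the cover on $T \times T$ following Rosu's approach in \cite[Proposition 2.5]{Rosu03}, and then transport it to $E_T$ via the isomorphism $\chi_T : \X^+ \times T \times T \to E_T$ of Remark \ref{grut}. Fix a translation-invariant metric $d$ on the compact Lie group $T \times T$, and for each $p \in T \times T$ write $\S_p := \{K \in \S \, : \, p \in K \times K\}$.

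For each $p$ I would first choose a radius $r_p > 0$ such that the open metric ball $B(p, r_p)$ simultaneously satisfies: (a) $B(p, r_p)$ is small in the sense of Definition \ref{rachel}; (b) $B(p, r_p) \cap (K \times K) = \emptyset$ for every $K \in \S \setminus \S_p$; and (c) $B(p, r_p) \cap (K \times K)$ lies in the connected component of $K \times K$ containing $p$, for each $K \in \S_p$. Such an $r_p$ exists because $\S$ is finite, small neighborhoods of $p$ form a neighborhood basis (since $\exp_{T \times T}$ is a covering map onto its image), the closed set $\bigcup_{K \notin \S_p} K \times K$ does not contain $p$, and each connected component of a closed subgroup $K \times K \subset T \times T$ is open in $K \times K$. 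I would then set
\[
U_p := B(p, r_p/3).
\]

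With this choice, properties (1), (3), and (4) of Definition \ref{rachel} are immediate: smallness is inherited by open subsets of small sets, property (3) is condition (b), and property (4) is condition (c). The essential point is property (2), and here the factor of $1/3$ does the work: if $U_p \cap U_q \neq \emptyset$, then without loss of generality $r_p \leq r_q$, and the triangle inequality gives $d(p, q) < r_p/3 + r_q/3 \leq 2 r_q / 3 < r_q$, so $p \in B(q, r_q)$. Condition (b) applied to $B(q, r_q)$ then forces every $K \in \S_p$ to lie in $\S_q$, i.e.\ $\S_p \subset \S_q$, which unwinds to $q \leq_\S p$. Setting $\widetilde{U}_p := \chi_T(\X^+ \times U_p)$ then yields the desired open cover of $E_T$.

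For the refinement claim, if $\U' = \{U'_p\}_{p}$ refines $\U$ with $p \in U'_p \subset U_p$ for every $p$, then smallness and properties (3) and (4) pass to the smaller sets directly, while property (2) follows from $U'_p \cap U'_q \subset U_p \cap U_q$ combined with property (2) for $\U$. I expect the main obstacle to be coordinating conditions (a), (b), and (c) in the single radius $r_p$ and applying the factor-of-three trick to secure property (2); the remaining properties amount to bookkeeping.
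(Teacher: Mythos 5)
Your proof is essentially the same as the paper's: both follow the Rosu-style argument on $T \times T$ (using a metric and open balls with carefully chosen radii, then transporting to $E_T$ via $\chi_T$). The only real difference is in bookkeeping: the paper takes the radius of $U_{a_1,a_2}$ to be $\tfrac{1}{2}\min\{d((a_1,a_2),D) : (a_1,a_2)\notin D\}$ where $D$ ranges over components of the $K\times K$ with $K\in\S$, and verifies properties (2) and (4) by showing the relevant pair of open balls is outright disjoint (sum of radii $\leq$ distance between centers); you take an abstract radius $r_p$ satisfying (a)--(c), scale by $\tfrac{1}{3}$, and use the triangle inequality to force $p\in B(q,r_q)$. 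Both variants work.

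One inaccuracy worth fixing: you state that property (4) of Definition \ref{rachel} \emph{is} your condition (c) and hence immediate, but condition (c) only constrains $B(p,r_p)\cap(K\times K)$ for a single center $p$, whereas property (4) concerns two centers $p,q$ with $U_p\cap U_q\neq\emptyset$, and the points of that intersection need not lie in $K\times K$ at all. Condition (c) does not directly bite. The correct deduction is the same factor-of-three argument you used for property (2): after WLOG $r_p\leq r_q$, conclude $p\in B(q,r_q)$, hence $p\in B(q,r_q)\cap(K\times K)$, which by (c) for $q$ lies in the component of $K\times K$ containing $q$. So property (4) does hold, but it is not immediate from (c) alone; it requires the same triangle-inequality step as property (2). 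With that caveat spelled out, the proof is complete.
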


\begin{proof}
Since a compact abelian group has finitely many components, the set 
\[
\S^0 = \{D \subset T \times T \, | \, \text{$D$ is a component of $K \times K$ for some $K \in \S$}\}
\]
is finite. We choose a metric on $T \times T$ and denote it by $d$. 
If $(a_1,a_2) \in D$ for all $D \in \S^0$, then define $U_{a_1,a_2}$ to be an open ball centered at $(a_1,a_2)$ with radius $r = \frac{1}{2}$. Otherwise, define $U_{a_1,a_2}$ to be an open ball centered at $(a_1,a_2)$, with radius  
\[
r = \frac{1}{2} \min \{d((a_1,a_2),D) \, | \, (a_1,a_2) \notin D \},
\]
where 
\[
d((a_1,a_2),D) = \min \{d((a_1,a_2),(b_1,b_2))\, | \, (b_1,b_2)\in D\}.
\]
The open cover $\U$ of $T \times T$ thus constructed clearly satisfies the first condition of an adapted cover.  

Furthermore, if there exist distinct components $D,D' \in \S^0$ such that $(a_1,a_2)$ is in $D$ but not in $D'$, and $(b_1,b_2)$ is in $D'$ but not in $D$, then $U_{a_1,a_2} \cap U_{b_1,b_2}$ is empty by construction. If $D$ and $D'$ correspond to distinct elements of $\S$, then $(a_1,a_2)$ and $(b_1,b_2)$ do not relate under the ordering, and the previous statement implies the contrapositive of the second condition. If $D$ and $D'$ correspond to the same element of $\S$, then it implies the contrapositive of the fourth condition. 

The third condition holds because $(b_1,b_2) \notin K\times K$ implies that $U_{b_1,b_2}$ does not intersect $K\times K$, by construction. It is clear that any refinement of $\U$ will also satisfy all four conditions.
\end{proof}

\begin{lemma}\label{hard2}
Let $\U$ be an open cover of $T\times T$ adapted to $\S$. If $(b_1,b_2) \in U_{a_1,a_2}$, then $(a_1,a_2) \leq (b_1,b_2)$.
\end{lemma}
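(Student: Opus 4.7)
The plan is to argue directly from the four defining properties of an adapted cover. First I would observe that $(b_1,b_2)$ lies in both $U_{a_1,a_2}$ (by hypothesis) and $U_{b_1,b_2}$ (by property (1)), so the intersection $U_{a_1,a_2}\cap U_{b_1,b_2}$ is nonempty. Property (2) then forces one of the two comparabilities $(a_1,a_2)\leq (b_1,b_2)$ or $(b_1,b_2)\leq (a_1,a_2)$. In the first case there is nothing left to prove.

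For the second case, suppose for contradiction that $(b_1,b_2)\leq (a_1,a_2)$ holds but $(a_1,a_2)\not\leq (b_1,b_2)$. Unpacking the negation of the relation, there must exist some $K\in\S$ with $(b_1,b_2)\in K\times K$ but $(a_1,a_2)\notin K\times K$. Now I would apply property (3) with the roles of $(a_1,a_2)$ and $(b_1,b_2)$ swapped: since $(b_1,b_2)\leq (a_1,a_2)$ and there is a $K\in\S$ containing $(b_1,b_2)$ but not $(a_1,a_2)$, property (3) gives $U_{a_1,a_2}\cap (K\times K)=\emptyset$. But $(b_1,b_2)$ belongs to both $U_{a_1,a_2}$ and $K\times K$, contradiction.

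Hence the second case also implies $(a_1,a_2)\leq (b_1,b_2)$, completing the proof. The argument is essentially a bookkeeping exercise about the asymmetric relation $\leq_\S$, and the only subtle point — the main obstacle, such as it is — is being careful that property (3) is stated with a specific orientation of the $\leq$-relation, so one must apply it with $(a_1,a_2)$ and $(b_1,b_2)$ in swapped roles relative to its literal statement. No other ingredient beyond properties (1), (2) and (3) is needed; property (4) plays no role here.
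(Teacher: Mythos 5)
Your proof is correct and follows essentially the same route as the paper: after noting that $(b_1,b_2)\in U_{a_1,a_2}\cap U_{b_1,b_2}$ forces comparability via property (2), you assume the conclusion fails, extract a witnessing $K\in\S$, and apply property (3) (with the roles of $(a_1,a_2)$ and $(b_1,b_2)$ exchanged) to derive the contradiction $U_{a_1,a_2}\cap(K\times K)=\emptyset$ while $(b_1,b_2)$ lies in both. The paper phrases this as a single contradiction argument rather than a case split, but the ingredients and their use are identical.
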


\begin{proof}
Suppose that $(b_1,b_2) \in U_{a_1,a_2}$ and $(a_1,a_2) \leq (b_1,b_2)$ does not hold. This implies two things. Firstly, by the second property of an adapted cover, we have $(b_1,b_2) \leq (a_1,a_2)$. Secondly, by definition of the relation, there must exist some $K \in \S$ such that $(b_1,b_2) \in K \times K$ and $(a_1,a_2) \notin K \times K$. Together, the two statements imply that $U_{a_1,a_2} \cap K \times K = \emptyset$, by the third property of an adapted cover. This contradicts the assumption that $(b_1,b_2) \in U_{a_1,a_2}$, since $(b_1,b_2) \in K \times K$.
\end{proof}

\begin{definition}\label{siss}
Let $\U = \{U_{a_1,a_2}\}$ be an open cover adapted to $\S$. We denote by $V_{x_1,x_2} \subset \t \times \t$ the open subset which is the component of $\exp_{T\times T}^{-1}(U_{\exp_{T\times T}(x_1,x_2)})$ containing $(x_1,x_2)$.
\end{definition}

The following lemma is a strengthening of the fourth property of an adapted cover, which we will need in the next section. 

\begin{lemma}\label{hard3}
Let $\U$ be a cover of $T\times T$ adapted to $\S$. Let $(a_1,a_2),(b_1,b_2) \in T\times T$ with open neighbourhoods $U_{a_1,a_2},U_{b_1,b_2} \in \U$, and let $(x_1,x_2),(y_1,y_2) \in \t \times \t$ such that $\exp_T(x_i) = a_i$ and $\exp_T(y_i) = b_i$. Let $K \in \S$ and suppose $(a_1,a_2),(b_1,b_2) \in K\times K$. If
\[
V_{x_1,x_2} \cap V_{y_1,y_2} \neq \emptyset, 
\]
then $(x_1,x_2)$ and $(y_1,y_2)$ lie in the same component of $\exp_{T\times T}^{-1}(K\times K)$.
\end{lemma}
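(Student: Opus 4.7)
The plan is to reduce the claim to a linear statement about images under the projection $\tilde\pi\colon\t\times\t\twoheadrightarrow\Lie(T/K)\times\Lie(T/K)$ (with kernel $\Lie(K)\times\Lie(K)$), and then to verify it by producing a point above $D$ at which two local inverse sections of $\exp_{T\times T}$ coincide. Applying $\exp_{T\times T}$ to any point of $V_{x_1,x_2}\cap V_{y_1,y_2}$ gives $U_{a_1,a_2}\cap U_{b_1,b_2}\neq\emptyset$, so property~(4) of an adapted cover places $(a_1,a_2)$ and $(b_1,b_2)$ in a common connected component $D$ of $K\times K$. Combining Lemma~\ref{hard2} with property~(4) applied to any element of $U_{a_1,a_2}\cap K\times K$ further yields $U_{a_1,a_2}\cap K\times K\subset D$, and similarly for $U_{b_1,b_2}$; so the homeomorphism $V_{x_1,x_2}\to U_{a_1,a_2}$ induced by $\exp_{T\times T}$ identifies $V_{x_1,x_2}\cap\exp_{T\times T}^{-1}(K\times K)$ with $U_{a_1,a_2}\cap D$. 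Since the components of $\exp_{T\times T}^{-1}(K\times K)$ are the cosets of $\Lie(K)\times\Lie(K)$ inside it, the problem reduces to showing $\tilde\pi(x_1,x_2)=\tilde\pi(y_1,y_2)$.

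For this I would pick $(z_1,z_2)\in V_{x_1,x_2}\cap V_{y_1,y_2}$ and set $(c_1,c_2)=\exp_{T\times T}(z_1,z_2)$. The local sections $\sigma_{x_1,x_2}\colon U_{a_1,a_2}\to V_{x_1,x_2}$ and $\sigma_{y_1,y_2}\colon U_{b_1,b_2}\to V_{y_1,y_2}$ of $\exp_{T\times T}$ agree at $(c_1,c_2)$; since their difference on $U_{a_1,a_2}\cap U_{b_1,b_2}$ is continuous into the discrete kernel $\check T\times\check T$, it is locally constant, so the two sections agree on the entire connected component $W$ of $U_{a_1,a_2}\cap U_{b_1,b_2}$ containing $(c_1,c_2)$. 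Granting that $W$ meets $D$ at some $(d_1,d_2)$, the common value $\sigma_{x_1,x_2}(d_1,d_2)=\sigma_{y_1,y_2}(d_1,d_2)$ lies in both $V_{x_1,x_2}\cap\exp_{T\times T}^{-1}(D)$ and $V_{y_1,y_2}\cap\exp_{T\times T}^{-1}(D)$. Each of these sets is the $\sigma$-image of a connected open subset of $D$ and hence connected; the continuous map $\tilde\pi$ sends $\exp_{T\times T}^{-1}(K\times K)$ to a discrete subgroup of $\Lie(T/K)\times\Lie(T/K)$, so it is constant on each, with values $\tilde\pi(x_1,x_2)$ and $\tilde\pi(y_1,y_2)$ respectively, which the shared point forces to agree.

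The main obstacle is verifying the smallness conditions used implicitly above: that $W$ meets $D$ and that each $U_{a_1,a_2}\cap D$ is connected. These are not explicit in Definition~\ref{rachel}, but hold for the concrete cover built in Lemma~\ref{construu} from metric balls of radius at most $\tfrac{1}{2}$ once one chooses a bi-invariant Riemannian metric on $T\times T$: cosets of closed subgroups are then totally geodesic, and small ball intersections with them are geodesically convex. Since any refinement of an adapted cover remains adapted, $\U$ may be refined as needed to guarantee these geometric properties.
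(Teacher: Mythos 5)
Your approach is genuinely different from the paper's, and the comparison is instructive. The paper argues by contradiction using a single metric estimate: if $(x_1,x_2)$ and $(y_1,y_2)$ lay in different components of $\exp_{T\times T}^{-1}(D)$, then their difference would be a nonzero element of $\check{T}/\check{K}\times\check{T}/\check{K}$ (modulo $\mathrm{Lie}(K)\times\mathrm{Lie}(K)$), forcing $d((x_1,x_2),(y_1,y_2))\geq 1$, which is incompatible with $V_{x_1,x_2}$ and $V_{y_1,y_2}$ each being contained in a ball of radius $\tfrac{1}{2}$. You instead reduce to comparing projections to $\Lie(T/K)\times\Lie(T/K)$ and locate a common point of the two local $\exp$-sections over $D$. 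Your reduction is sound, and the observation that $U_{a_1,a_2}\cap K\times K\subset D$ via Lemma~\ref{hard2} and property~(4) is a nice touch. However, the paper's proof only invokes the flat lattice geometry of $\check{T}^2\subset\t\times\t$, whereas yours requires additional machinery (totally geodesic cosets, geodesic convexity of ball–coset intersections, and a nontrivial argument that the relevant component $W$ meets $D$), making the paper's argument considerably leaner for the same conclusion.

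The genuine gap is in your final paragraph. Refining $\U$ does \emph{not} salvage the argument, because refining shrinks each $V_{x_1,x_2}$ and hence can destroy the hypothesis $V_{x_1,x_2}\cap V_{y_1,y_2}\neq\emptyset$: proving the lemma for a refinement $\U'$ establishes the implication only under the strictly stronger hypothesis $V'_{x_1,x_2}\cap V'_{y_1,y_2}\neq\emptyset$, and therefore does not imply the lemma for $\U$ itself. (It is true, and worth noting, that the paper's own proof also leans on more than the literal definition of \emph{small} in Definition~\ref{rachel} when it asserts that $V_{x_1,x_2}$ lies in a $\tfrac{1}{2}$-ball about $(x_1,x_2)$; that bound is a feature of the metric-ball cover constructed in Lemma~\ref{construu} rather than a formal consequence of the covering-space condition. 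So both arguments implicitly restrict to the constructed cover and its refinements. But you need $W\cap D\neq\emptyset$ and connectivity of $U_{a_1,a_2}\cap D$ on top of that, and you have not justified them within the hypotheses as stated, nor can ``refine as needed'' be invoked to supply them.) The cleaner repair, if you want to preserve your reduction, is to import the paper's distance computation at the final step: once you know $(x_1,x_2),(y_1,y_2)\in\exp_{T\times T}^{-1}(D)$, the difference lies in $\check{T}^2 + \Lie(K)\times\Lie(K)$, and orthogonality of the lattice part to $\Lie(K)\times\Lie(K)$ forces distance $\geq 1$ unless $\tilde\pi(x_1,x_2)=\tilde\pi(y_1,y_2)$, at which point the $\tfrac{1}{2}$-ball bound on the $V$'s contradicts $V_{x_1,x_2}\cap V_{y_1,y_2}\neq\emptyset$. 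That replaces your local-section detour with the paper's one-line estimate and eliminates the need for the geometric convexity facts and the $W\cap D\neq\emptyset$ claim entirely.
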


\begin{proof}
Since $V_{x_1,x_2} \cap V_{y_1,y_2} \neq \emptyset$, we have $U_{a_1,a_2} \cap U_{b_1,b_2} \neq \emptyset$. Therefore, $(a_1,a_2)$ and $(b_1,b_2)$ lie in the same component $D$ of $K\times K$, by the fourth property of an adapted cover, so that $(x_1,x_2),(y_1,y_2) \in \exp_{T\times T}^{-1}(D)$. We have
\begin{equation}
\begin{array}{rcl}
\exp_{T\times T}^{-1}(D) &=& \{(x_1,x_2)\} + \check{T} \times \check{T} + \mathrm{Lie}(K) \times \mathrm{Lie}(K)\\
&=& \{(x_1,x_2)\} + \check{T}/\check{K} \times \check{T}/\check{K} + \mathrm{Lie}(K) \times \mathrm{Lie}(K). \\
\end{array}
\end{equation}
Note that, if $A$ and $B$ are subsets of a topological group, then we use the notation $A + B$ to mean the set $\{a + b \, | \, a \in A, b\in B\}$. Now suppose that $(x_1,x_2)$ and $(y_1,y_2)$ lie in different components of $\exp_{T\times T}^{-1}(D)$. Then $\check{T}/\check{K}$ is nontrivial, and we have
\[
(x_1,x_2) = (x_1 + m_1 + h_1, x_2 + m_2 + h_2) \quad \mathrm{and} \quad (y_1,y_2) = (x_1 + m_1' + h_1', x_2 +m_2' + h_2')
\]
for some $h_1,h_2,h_1',h_2' \in \Lie(K)$ and distinct $(m_1,m_2),(m_1',m_2') \in \check{T}/\check{K}\times \check{T}/\check{K}$.

Choose a metric $d$ on $\t \times \t$. We have
\[
\begin{array}{rcl}
d((x_1,x_2),(y_1,y_2)) &=& |(x_1,x_2)-(y_1,y_2)|  \\
&=& |(m_1-m_1' + h_1 - h_1',m_2-m_2' + h_2 - h_2')|\\
&\geq &|m_1-m_1',m_2-m_2' | \\
\end{array}
\]
where the inequality holds since $\check{T}/\check{K}$ is orthogonal to Lie$(K)$. Since $(m_1,m_2) \neq (m_1',m_2')$, we have
\[
d((x_1,x_2),(y_1,y_2)) \geq 1.
\]

By the first property of an adapted cover, $U_{a_1,a_2}$ is small, which means that $V_{x_1,x_2}$ is contained in the interior of a ball at $(x_1,x_2)$ with radius $\frac{1}{2}$. The same is true for $V_{y_1,y_2}$. But this means that $V_{x_1,x_2}$ and $V_{y_1,y_2}$ cannot intersect, since $d((x_1,x_2),(y_1,y_2)) \geq 1$, so we have a contradiction. Therefore, $(x_1,x_2)$ and $(y_1,y_2)$ must lie in the same connected component of $\exp_{T\times T}^{-1}(D)$, and hence the same connected component of $\exp_{T\times T}^{-1}(K\times K)$.
\end{proof}

\begin{definition}\label{defo}
Let $\U$ be an open cover of $T \times T$ adapted to $\S$. For each $(t,x)  \in \X^+ \times \t_\C$, writing $x = x_1t_1 + x_2t_2$, we define an open subset
\[
V_{t,x} := \xi_T(\X^+ \times V_{x_1,x_2}) \subset \X^+ \times \t_\C,
\]
so that $(t,x) \in V_{t,x}$. Note that $V_{t,x} = V_{t',x'}$ whenever we have $x = x_1t_1 +x_2t_2$ and $x' = x_1t_1'+x_2t_2'$. The set 
\[
\{V_{t,x}\}_{(t,x) \in \X^+ \times \t_\C}
\]
is an open cover of $\X^+ \times \t_\C$ (with some redundant elements). The set
\[
\{\zeta_T(V_{t,x})\}_{(t,x) \in \X^+ \times \t_\C}
\]
is an open cover of $E_T$.
\end{definition}

\begin{definition}\label{sag}
Let $\U$ be an open cover of $T \times T$ adapted to $\S$. Given $a \in E_{T,t}$, define the open subset
\[
U_a := \zeta_{T}(V_{t,x}) \cap E_{T,t}
\]
where $x \in \t_\C$ is any element such that $a= \zeta_{T,t}(x)$, so that $a \in U_a$. The set $\{U_a\}_{a\in E_{T,t}}$ is an open cover of $E_{T,t}$.
\end{definition}

\begin{lemma}
Let $\U$ be an open cover of $T \times T$ which is adapted to $\S$. The open cover of $E_{T,t}$ in Definition \ref{sag} is adapted to $\S$ in the sense of Definition \ref{ross}.
\end{lemma}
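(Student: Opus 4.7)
The plan is to leverage the homeomorphism $\chi_{T,t}: T \times T \xrightarrow{\sim} E_{T,t}$ obtained as the fiber over $t \in \X^+$ of diagram \eqref{deer} to translate each of the four conditions of Definition \ref{ross} into the corresponding condition of Definition \ref{rachel}. The first step is to identify $U_a$ as a direct image. Given $a \in E_{T,t}$, pick $x \in \t_\C$ with $\zeta_{T,t}(x) = a$ and write $x = x_1 t_1 + x_2 t_2$ with $(x_1,x_2) \in \t \times \t$. Set $(a_1,a_2) := \exp_{T \times T}(x_1,x_2)$; then $\chi_{T,t}(a_1,a_2) = a$ by commutativity of \eqref{deer}. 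Using that $U_{a_1,a_2}$ is small, so $\exp_{T\times T}|_{V_{x_1,x_2}}$ is a bijection onto $U_{a_1,a_2}$, and chasing through \eqref{deer}, one verifies
\[
U_a = \zeta_T(V_{t,x}) \cap E_{T,t} = \chi_{T,t}(\exp_{T\times T}(V_{x_1,x_2})) = \chi_{T,t}(U_{a_1,a_2}).
\]

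The second step is to verify that $\chi_{T,t}$ restricts to a bijection $K \times K \xrightarrow{\sim} E_{K,t}$ for every closed subgroup $K \subset T$. One direction, $\chi_{T,t}(K \times K) \subseteq E_{K,t}$, is immediate from naturality of \eqref{deer} in $K$. For the other direction, surjectivity of $\chi_{K,t}$ suffices, which follows by decomposing $K$ as a product of a torus and a finite abelian group and checking the two factors separately (the torus factor yields an isomorphism, and for $\Z/n\Z$ the map $(\Z/n\Z)^2 \to E_t[n]$ is manifestly surjective). In particular, $a \in E_{K,t}$ iff $(a_1,a_2) \in K \times K$, so the partial orders $\leq_\S$ on $E_{T,t}$ and on $T \times T$ correspond under $\chi_{T,t}$.

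Given the above, the four conditions of Definition \ref{ross} reduce directly to those of Definition \ref{rachel}. Condition (1): $a \in U_a$ since $(a_1,a_2) \in U_{a_1,a_2}$; smallness of $U_a$ follows because each connected component of $\exp_{T \times T}^{-1}(U_{a_1,a_2})$ yields, via $\xi_{T,t}$, a connected component of $\zeta_{T,t}^{-1}(U_a)$ on which $\zeta_{T,t}$ is a bijection. Conditions (2), (3), and (4) are direct consequences of the corresponding items in Definition \ref{rachel} applied to $U_{a_1,a_2}$ and $U_{b_1,b_2}$, combined with the subgroup correspondence from the second step and the fact that $\chi_{T,t}$, being a group isomorphism, carries connected components of $K \times K$ bijectively to connected components of $E_{K,t}$. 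The main subtlety lies in the second step---verifying that the subgroup-defined orderings correspond under $\chi_{T,t}$---but once that is established, no substantive obstacle remains.
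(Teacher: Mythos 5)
Your proposal is correct and follows essentially the same route as the paper: establish the identity $U_a = \chi_{T,t}(U_{a_1,a_2})$ by chasing diagram \eqref{deer}, and then transfer the four conditions across the group isomorphism $\chi_{T,t}$. The paper compresses the transfer step into the single clause ``by naturality of diagram \eqref{deer} it is clear that the properties \dots are equivalent,'' whereas you make explicit the key ingredient — that $\chi_{T,t}$ restricts to a bijection $K \times K \to E_{K,t}$ for each closed $K \subset T$, so the orders $\leq_\S$ correspond; this is sound, though the detour through decomposing $K$ into torus and finite parts is redundant, since $\chi_{K,t} = \Hom(\hat{K},\chi_t)$ is already asserted to be an isomorphism as part of diagram \eqref{deer}.
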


\begin{proof}
We have
\[
\begin{array}{rcl}
U_a &:=& \zeta_T (V_{t,x}) \cap E_{T,t}  \\
&=& (\zeta_T \circ \xi_T)(\X^+ \times V_{x_1,x_2}) \cap E_{T,t}  \\
&=& (\chi_T \circ (\id_{\X^+} \times \exp_{T\times T}))(\X^+ \times V_{x_1,x_2}) \cap E_{T,t}  \\
&=& \chi_T(\X^+ \times U_{a_1,a_2}) \cap E_{T,t} \\
&=& \chi_{T,t}(U_{a_1,a_2})
\end{array}
\]
where $a_i = \exp_T(x_i)$. Therefore, the open cover $\{U_a\}_{a\in E_{T,t}}$ of $E_{T,t}$ corresponds exactly to the open cover $\U$ of $T \times T$ via the isomorphism
\[
\chi_{T,t}: T \times T \cong E_{T,t}
\]
of real Lie groups. By naturality of diagram \eqref{deer} it is clear that the properties of an adapted cover in the sense of Definition \ref{ross} are equivalent to the properties in Definition \ref{rachel}. Since $\U$ is adapted to $\S$ in the sense of Definition \ref{rachel}, the result now follows.
\end{proof}

\begin{example}\label{coverex}
Let $T = \R/\Z$, and for $\lambda \in \hat{T}$ set $X$ equal to the representation sphere $S_\lambda$ associated to $\lambda$ (see Example \ref{repsphere}). Recall that $X$ has a $T$-CW complex structure with a $0$-cell $T/T \times \D_N^0$ at the north pole, a $0$-cell $T/T \times \D_S^0$ at the south pole, and a 1-cell $T \times \D^1$ of free orbits connecting the two poles. Thus,
\[
\S(X) = \{T, 1\}.
\]
The relations $\leq_{\S(X)}$ between the points of $T \times T$ are:
\begin{enumerate}
\item $(0,0) \leq (a_1,a_2)$ for all $(a_1,a_2) \in T \times T$; and
\item $(a_1,a_2) \leq (b_1,b_2)$ for all $(a_1,a_2),(b_1,b_2) \in T \times T - \{(0,0)\}$.
\end{enumerate}
As in the proof of Lemma \ref{construu}, we can easily construct an open cover of $T \times T$ which is adapted to $\S(X)$. Note that $\S^0 = \S$ in this case. Let $\U$ denote the open cover consisting of open balls $U_{a_1,a_2}$ centered at $(a_1,a_2)$ with radius 
\[
r_{a_1,a_2} = \begin{cases} \frac{1}{2} \sqrt{a_1^2 + a_2^2}  &\mbox{if } a_1 \neq 0 \mbox{ or } a_2 \neq 0 \\ 
\frac{1}{2} & \mbox{if } (a_1,a_2) = (0,0)\end{cases},
\]
where we have identified $a_1,a_2$ with their unique representatives in $[ 0,1)$. It is easily verified that $\U$ is a cover adapted to $\S(X)$. Now, for $a = \chi_{T,t}(a_1,a_2)$ and $x = x_1t_1 +x_2t_2$ such that $\zeta_{T,t}(x) = a$, the open set
\[
V_{x_1,x_2} \subset \R^2
\]
is an open ball of radius $\frac{1}{2}$ if $(x_1,x_2) \in \Z^2$, and an open neighbourhood not intersecting $\Z^2$ otherwise. 
\end{example}

\section{The Borel-equivariant cohomology of double loop spaces}\label{low}

In Definition \ref{bochum}, we defined for a finite $T$-CW complex $X$ the holomorphic sheaf $\H^*_T(X)$ of $\Z/2\Z$-graded $\O_{\t_\C}$-algebras. We would like to apply this definition to produce the main ingredient in our construction of elliptic cohomology: the holomorphic sheaf $\H^*_{\bT^2 \times T}(L^2X)$, which depends on the $\bT^2 \times T$-equivariant space $L^2X$. However, since $L^2X$ is not compact, we cannot apply Definition \ref{bochum} directly. To solve this problem, we take inspiration from a construction that was given by Kitchloo in \cite{Kitch1} (see Definition 3.2). Namely, we tensor the cohomology ring of each finite subcomplex of $L^2X$ with the ring of holomorphic functions on $\X^+ \times \t_\C$, and then define $\H^*_{\bT^2 \times T}(L^2X)$ as the inverse limit over the maps induced by the inclusions of subcomplexes. While this definition is at first obscure, it will become clearer during remainder of this section when we obtain a nice description of the local values of $\H^*_{\bT^2 \times T}(L^2X)$.

\begin{remark}
By Theorem 1.1 in \cite{LMS}, the double loop space $L^2X = \Map(\bT^2,X)$ of $X$ is weakly $\bT^2 \times T$-homotopy equivalent to a $\bT^2 \times T$-CW complex $Z$.\footnote{To apply this theorem to our situation, set $G = \bT^2 \times T$. Now let $G$ act on $\bT^2$ via the projection to the first factor, and on $X$ via the projection to the second factor.} From now on, when we speak of a $\bT^2 \times T$-CW structure on $L^2X$, we will mean the replacement complex $Z$. In this situation, we will abuse notation and write $L^2X$ for $Z$.
\end{remark}

\begin{definition}\label{varlimm}
Recall the notation of Definition \ref{bochum}. We define the sheaf $\H^*_{\bT^2 \times T}(L^2X)$ of $\Z/2\Z$-graded $\O_{\X^+ \times \t_\C}$-algebras as
\[
\H^*_{\bT^2 \times T}(L^2X) := \varprojlim_{Y \subset L^2X} \H^*_{\bT^2 \times T}(Y)_{\X^+ \times \t_\C}
\]
where the inverse limit is in the category of $\Z/2\Z$-graded $\O_{\X^+ \times \t_\C}$-algebras, and runs over all finite $\bT^2 \times T$-CW subcomplexes $Y$ of $L^2X$.
\end{definition}

\begin{remark}
The inverse limit may be computed in the category of presheaves, so that the value of the inverse limit sheaf $\H^*_{\bT^2 \times T}(L^2X)$ on an open subset $U \subset \X^+ \times \t_\C$ is 
\[
\varprojlim_{Y \subset L^2X} H^*_{\bT^2 \times T}(Y) \otimes_{H_{\bT^2 \times T}} \O_{\X^+ \times \t_\C}(U).
\]
\end{remark}

\begin{remark}
If $X = \pt$ then $\H^*_{\bT^2 \times T}(L^2X) = \O_{\X^+ \times \t_\C}$, by construction.
\end{remark}

The sheaf $\H^*_{\bT^2 \times T}(L^2X)$ depends a priori on all finite subcomplexes $Y$ of $L^2X$, which are of arbitrarily large dimension. Since this is difficult to analyse, it is useful to apply Theorem \ref{localisationn} to describe the stalk $\H^*_{\bT^2 \times T}(Y)_{(t,x)}$ in terms of the cohomology of the subspace $Y^{t,x} \subset Y$ of loops fixed by a certain subgroup $T(t,x) \subset \bT^2 \times T$ (cf. Theorem 3.3 in \cite{Kitch1}). Doing this for each $Y$, we can then describe $\H^*_{\bT^2 \times T}(L^2X)_{(t,x)}$ as an inverse limit over a class of subspaces of $L^2X$ which is hopefully more tractable than before. It turns out that this is a fruitful line of attack, because the subgroup $T(t,x)$ is big enough to fix only a relatively small number of loops in $L^2X$. The main result of this section is that this description holds not only on stalks, but also on the open sets $\{V_{t,x}\}$ defined in the previous section. Before we arrive at the main result, we must prove five technical lemmas about the groups $T(t,x)$, the spaces $Y^{t,x}$ and the open sets $V_{t,x}$.

\begin{definition}
Let $(t,x)$ be a point in 
\[
\X^+ \times \t_\C\subset \Lie(\bT^2 \times T)_\C. 
\]
Recall that a closed subgroup $K \subset \bT^2 \times T$ induces an inclusion of complexified Lie algebras $\Lie(K)_\C \subset \Lie(\bT^2 \times T)_\C$. Define
\[
T(t,x) := \bigcap_{(t,x) \in \mathrm{Lie}(K)_\C} K
\]
as the intersection of closed subgroups $K \subset \bT^2 \times T $ whose complexified Lie algebra contains $(t,x)$. For a $\bT^2 \times T$-space $Y$, denote by $Y^{t,x}$ the subspace of points fixed by $T(t,x)$.
\end{definition}

\begin{remark}\label{push}
We give an explicit description of $T(t,x)$ for a given $(t,x) \in \X^+ \times \t_\C$. The identification
\[
\Lie(\bT^2 \times T)_\C =  \Lie(\bT^2 \times T) \otimes_\R \C = \Lie(\bT^2 \times T) t_1 + \Lie(\bT^2 \times T) t_2  
\]
induced by $\C = \R t_1 + \R t_2$ is clearly natural in subgroups of $\bT^2 \times T$. For a closed subgroup $K \subset \bT^2 \times T$, the complex Lie algebra $\Lie(K)_\C$ therefore contains 
\[
(t,x) = (t_1,t_2,x_1t_1+x_2t_2) = (1,0,x_1) t_1 + (0,1,x_2) t_2 \in \Lie(\bT^2 \times T)_\C,
\]
if and only if 
\[
(1,0,x_1),(0,1,x_2) \in \mathrm{Lie}(K).
\] 
Therefore, $T(t,x)$ is the smallest subgroup whose Lie algebra contains both $(1,0,x_1)$ and $(0,1,x_2)$, which is 
\[
T(t,x) = \langle \exp_{\bT^2 \times T}(r_1,0,x_1 r_1),\exp_{\bT^2 \times T}(0,r_2,x_2 r_2) \rangle_{r_1,r_2 \in \R}.
\]
Moreover, we also have 
\[
T(t,x) \cap T = \langle \exp_T(x_1),\exp_T(x_2) \rangle,
\]
where we have identified $T$ with its image under the inclusion $T \hookrightarrow \bT^2 \times T$. This holds since the intersection of $T(t,x)$ with $T \subset \bT^2 \times T$ consists exactly of those elements of $T(t,x)$ for which $r_1,r_2 \in \Z$.  
\end{remark}

\begin{remark}\label{pusha}
Recall the subgroup $T(a) \subset T$ of Definition \ref{teed}. Let $a = \zeta_T(t,x)$ and write $x = x_1t_1 + x_2t_2$. By diagram \eqref{deer}, it is clear that $E_{K,t}$ contains $a$ if and only if 
\[
\exp_T(x_1),\exp_T(x_2) \in K.
\]
Therefore, $T(a)$ is the closed subgroup 
\[
\langle \exp_T(x_1), \exp_T(x_2) \rangle \subset T.
\] 
By Remark \ref{push}, we therefore have that $T(a) = T \cap T(t,x)$.
\end{remark}

\begin{lemma}\label{surj}
Let $\zeta_T(t,x) = a$. There is a short exact sequence of compact abelian groups 
\[
0 \to T(a) \to T(t,x) \to \bT^2 \to 0
\]
where $T(t,x) \to \bT^2$ is the map induced by the projection $\bT^2 \times T \twoheadrightarrow \bT^2$.
\end{lemma}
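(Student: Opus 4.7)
The plan is to verify the three exactness conditions directly using the explicit descriptions of $T(t,x)$ and $T(a)$ established in Remarks \ref{push} and \ref{pusha}. First I would set up the two maps. The inclusion $T(a)\hookrightarrow T(t,x)$ is simply the restriction of the standard inclusion $T\hookrightarrow \bT^2\times T$; this lands in $T(t,x)$ precisely because Remark \ref{pusha} identifies $T(a)$ with $T\cap T(t,x)$. The second map is the restriction to $T(t,x)$ of the projection $\bT^2\times T\twoheadrightarrow \bT^2$, which is a continuous group homomorphism. Injectivity of $T(a)\to T(t,x)$ is automatic since $T\hookrightarrow \bT^2\times T$ is injective, and the composite $T(a)\to T(t,x)\to \bT^2$ vanishes since elements of $T$ have trivial image in $\bT^2$.

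Next I would verify exactness in the middle. An element of $T(t,x)$ lies in the kernel of the projection to $\bT^2$ if and only if its first two coordinates vanish, i.e., if and only if it lies in $T(t,x)\cap T$. By Remark \ref{pusha} this intersection is precisely $T(a)$, so $\ker(T(t,x)\to \bT^2) = T(a)$, which gives the desired exactness.

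The remaining step is surjectivity of $T(t,x)\to \bT^2$. By the explicit description in Remark \ref{push}, $T(t,x)$ is generated topologically by the one-parameter families $\exp_{\bT^2\times T}(r_1,0,x_1r_1)$ and $\exp_{\bT^2\times T}(0,r_2,x_2r_2)$ for $r_1,r_2\in\R$. Projecting to $\bT^2$ yields $(\exp(r_1),0)$ and $(0,\exp(r_2))$ respectively; as $r_1,r_2$ range over $\R$ these elements sweep out the two circle factors of $\bT^2$, so the image contains a generating set and is therefore all of $\bT^2$.

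Honestly there is no serious obstacle: once the explicit generators of $T(t,x)$ from Remark \ref{push} are in hand, and once Remark \ref{pusha} is invoked to identify $T(a)$ with $T\cap T(t,x)$, the three verifications are essentially formal. The only point requiring a moment's care is noting that surjectivity already follows from the individual one-parameter subgroups hitting each $\bT$-factor separately, so no joint analysis of the two generators is needed.
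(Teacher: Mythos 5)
Your proof is correct and takes essentially the same approach as the paper: the paper's proof is simply the one-line observation that the projection of $T(t,x)$ to $\bT^2$ is surjective by the description in Remark \ref{push} and has kernel $T(t,x)\cap T = T(a)$ by Remark \ref{pusha}. Your write-up spells out the same two facts in more detail (plus the routine injectivity and ``composite vanishes'' checks), but there is no difference in substance.
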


\begin{proof}
The projection of $T(t,x) \subset \bT^2 \times T$ onto $\bT^2$ is surjective by the description in Remark \ref{push}, and has kernel $T(t,x) \cap T = T(a)$ by Remark \ref{pusha}.
\end{proof}

\begin{notation}\label{nerf}
Write
\[
p_a: T \twoheadrightarrow T/T(a) \qquad \mathrm{and}  \qquad p_{t,x}: \bT^2 \times T \twoheadrightarrow (\bT^2 \times T)/T(t,x)
\]
for the quotient maps, and let 
\[
\iota_{s_1,s_2}: T \hookrightarrow \bT^2 \times T
\]
denote the inclusion of the fiber over $(s_1,s_2) \in \bT^2$. 
\end{notation}

\begin{remark}\label{fern}
It follows from Lemma \ref{surj} that there is a commutative diagram 
\begin{equation}\label{rect}
\begin{tikzcd}
T(a) \ar[r,hook] \ar[d,hook] & T \ar[r, two heads,"p_a"] \ar[d, "\iota_{0,0}",hook] & T/T(a) \ar[d, "\nu", dashed] \\
T(t,x) \ar[r,hook] \ar[d,two heads] & \bT^2 \times T \ar[r, two heads,"{p_{t,x}}"] \ar[d,two heads] & (\bT^2 \times T)/T(t,x) \ar[d] \\
\bT^2 \ar[r,equal] & \bT^2 \ar[r] & 0
\end{tikzcd}
\end{equation}
where $\nu$ is induced by $\iota_{0,0}$.
\end{remark}

\begin{lemma}\label{groupss} 
The map $\nu$ of diagram \eqref{rect} is an isomorphism.
\end{lemma}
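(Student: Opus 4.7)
The plan is to verify directly that $\nu$ is both injective and surjective, using two facts already established: that $T(a) = T \cap T(t,x)$ (Remark \ref{pusha}), and that the projection $T(t,x) \to \bT^2$ induced by $\bT^2 \times T \twoheadrightarrow \bT^2$ is surjective (Lemma \ref{surj}). Essentially this is a standard diagram chase in the $3 \times 3$ diagram \eqref{rect}, where all three rows and the first two columns are already known to be exact, so the third column must be as well; but since the statement is compact enough, a direct two-step verification is cleanest.

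For injectivity, I would observe that $\nu$ is induced by the composite $p_{t,x} \circ \iota_{0,0}: T \to (\bT^2 \times T)/T(t,x)$. An element $t \in T$ lies in the kernel of this composite precisely when $\iota_{0,0}(t) = (0,0,t)$ belongs to $T(t,x)$, i.e.\ when $t \in T \cap T(t,x)$. By Remark \ref{pusha}, this intersection is exactly $T(a)$. Hence the map $p_{t,x} \circ \iota_{0,0}$ descends to an injection $T/T(a) \hookrightarrow (\bT^2 \times T)/T(t,x)$, which is $\nu$.

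For surjectivity, I would take an arbitrary class $[(s_1,s_2,u)] \in (\bT^2 \times T)/T(t,x)$ and use Lemma \ref{surj} to choose a lift of $(s_1,s_2) \in \bT^2$ to some element $(s_1,s_2,u') \in T(t,x)$. Then
\[
(s_1,s_2,u) - (s_1,s_2,u') = (0,0,u-u') = \iota_{0,0}(u-u'),
\]
so $[(s_1,s_2,u)] = [\iota_{0,0}(u-u')] = \nu([u-u'])$ in the quotient. Thus every class lies in the image of $\nu$.

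The argument has no real obstacle since all the work was done in proving Lemma \ref{surj}; the only thing to be careful about is matching the bookkeeping of Notation \ref{nerf} with the description of $T(t,x)$ in Remark \ref{push}, to make sure the identification $T \cap T(t,x) = T(a)$ is applied in the correct ambient group.
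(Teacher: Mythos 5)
Your proof is correct and amounts to the same argument the paper gives: the paper appeals to a standard diagram chase in the $3\times 3$ diagram \eqref{rect} (with exactness of both columns from Lemma \ref{surj} and exactness of the rows), while you simply unwind that chase into a direct verification that the kernel of $p_{t,x}\circ\iota_{0,0}$ is $T\cap T(t,x)=T(a)$ (Remark \ref{pusha}) and that surjectivity of $T(t,x)\to\bT^2$ lets you represent any class by an element of the form $\iota_{0,0}(u-u')$.
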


\begin{proof}
By Lemma \ref{surj}, the left hand column is a short exact sequence. It is clear that the middle column is a short exact sequence, as are all rows. It now follows from a standard diagram chase that $\nu$ is an isomorphism.
\end{proof}

\begin{remark}\label{pushy}
The description of $T(t,x)$ in Remark \ref{push} allows us to explicitly describe $L^2X^{t,x} := L^2X^{T(t,x)}$. By this description, a loop $\gamma \in L^2X$ is fixed by $T(t,x)$ if and only if
\[
\gamma(s_1,s_2) = \exp_{\bT^2 \times T}((r_1,0,x_1 r_1)) \cdot \gamma(s_1,s_2) = \exp_{T}( x_1 r_1) \cdot \gamma(s_1 - \exp(r_1),s_2)
\]
and
\[
\gamma(s_1,s_2) = \exp_{\bT^2 \times T}((0,r_2,x_2 r_2)) \cdot \gamma(s_1,s_2) = \exp_T(x_2 r_2) \cdot \gamma(s_1,s_2- \exp( r_2))
\]
for all $s=(s_1,s_2) \in \bT^2$ and all $r_1,r_2 \in \R$. By setting $s_1 = s_2 = 0$, one sees that this is true if and only if 
\[
\gamma(s_1,s_2) = \exp_T(x_1 s_1 + x_2s_2) \cdot \gamma(0,0),
\]
where $\gamma(0,0) \in X^a$ for $a = \zeta_T(t,x)$. Therefore, we have
\[
L^2X^{t,x} = \{ \gamma \in L^2X \, | \,  \gamma(s_1,s_2) = \exp_T(x_1 s_1 +  x_2 s_2)\cdot z, \, z \in X^a\}.
\]
Note that, although $s_1,s_2$ are elements of $\bT = \R/\Z$, the loop $\exp_T(x_1 s_1 +  x_2 s_2)\cdot z$ is well defined since $\exp_T$ is a homomorphism and $z$ is fixed by both $\exp_T(x_1)$ and $\exp_T(x_2)$. Note also that the image of each $\gamma \in L^2X^{t,x}$ is contained in a single $T$-orbit, so that $L^2X^{t,x}$ is contained in the space $L^2X^{gh}$ of \textit{ghost maps}, which appears in Section 5 of \cite{Rezk}. This description of $L^2X^{t,x}$ should also be compared to Corollary 3.4 in \cite{Kitch1}.
\end{remark}

\begin{remark}
The map 
\[
\begin{tikzcd}
ev: L^2X \ar[r]& X
\end{tikzcd}
\]
given by evaluating a loop at $(0,0)$ is evidently $T$-equivariant and continuous.
\end{remark}

\begin{lemma}\label{equal}
The map $ev$ induces a homeomorphism
\[
\begin{tikzcd}
ev_{t,x}: L^2X^{t,x} \ar[r,"\sim"]& X^{a}
\end{tikzcd}
\]
which is natural in $X$, and equivariant with respect to $\nu$. 
\end{lemma}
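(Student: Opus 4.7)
The plan is to build the proposed homeomorphism directly from the explicit description of $L^2X^{t,x}$ already obtained in Remark \ref{pushy}. There I have
\[
L^2X^{t,x} = \{ \gamma \in L^2X \, | \, \gamma(s_1,s_2) = \exp_T(x_1s_1 + x_2s_2)\cdot z, \, z \in X^a\},
\]
so every fixed loop $\gamma$ is completely determined by the value $z = \gamma(0,0) \in X^a$. Conversely, for any $z \in X^a$, the formula $\gamma_z(s_1,s_2) := \exp_T(x_1s_1 + x_2s_2)\cdot z$ defines an element of $L^2X^{t,x}$: it descends from $\R^2$ to $\bT^2 = \R^2/\Z^2$ precisely because $z$ is fixed by $\exp_T(x_1)$ and $\exp_T(x_2)$, and by Remark \ref{pusha} these elements generate $T(a)$. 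This yields a set-theoretic inverse $z \mapsto \gamma_z$ to $ev_{t,x}$.

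Next I would verify that both $ev_{t,x}$ and its inverse are continuous. Continuity of $ev_{t,x}$ is immediate since it is the restriction of the evaluation-at-$(0,0)$ map $L^2X \to X$, which is continuous by standard properties of the compact-open topology. For the inverse $z \mapsto \gamma_z$, the associated adjoint map $\bT^2 \times X^a \to X$ sending $(s_1,s_2,z)$ to $\exp_T(x_1s_1+x_2s_2)\cdot z$ is manifestly continuous (composition of the continuous group action with continuous group operations), so the inverse is continuous by the exponential law for mapping spaces. Hence $ev_{t,x}$ is a homeomorphism.

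Naturality in $X$ is formal: a $T$-equivariant map $f: X \to Y$ induces $f_* : L^2X \to L^2Y$, which sends $T(t,x)$-fixed loops to $T(t,x)$-fixed loops, and plainly commutes with evaluation at $(0,0)$, restricting on target spaces to the induced map $X^a \to Y^a$.

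The only point requiring a small unwinding is equivariance with respect to $\nu$. By diagram \eqref{rect}, $\nu$ is induced by $\iota_{0,0}$: for $[h] \in T/T(a)$ represented by $h \in T$, one has $\nu([h]) = [(0,0,h)] \in (\bT^2 \times T)/T(t,x)$. Applying the $\bT^2 \times T$-action formula,
\[
\bigl((0,0,h) \cdot \gamma\bigr)(s_1,s_2) = h \cdot \gamma(s_1,s_2),
\]
so evaluating at $(0,0)$ gives $h \cdot \gamma(0,0) = [h] \cdot ev_{t,x}(\gamma)$ in $X^a$, where we use that $T(a)$ acts trivially on $X^a$ to pass from $h$ to $[h]$. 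The mild obstacle in the argument is this bookkeeping of the semidirect product action together with checking that the formula $\gamma_z$ genuinely descends to $\bT^2$; both reduce, via Remark \ref{pusha}, to the identification $T(a) = T \cap T(t,x) = \langle \exp_T(x_1), \exp_T(x_2) \rangle$.
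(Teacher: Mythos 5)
Your proof is correct and follows exactly the route the paper intends: the paper's own proof is the one-liner ``This is evident by Remark \ref{pushy}.'', and your argument simply unpacks that remark, verifying the set-theoretic inverse, the two-sided continuity via the exponential law, naturality, and $\nu$-equivariance. One small point worth noting is that checking equivariance only on elements of the form $(0,0,h)$ suffices precisely because $\nu$ (induced by $\iota_{0,0}$) is already known to be an isomorphism from Lemma \ref{groupss}, so these exhaust $K(t,x)$.
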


\begin{proof}
This is evident by Remark \ref{pushy}.
\end{proof}

\begin{example}\label{loopex}
We calculate $L^2X^{t,x}$ in the example of the representation sphere $X = S_\lambda$ associated to $\lambda \in \hat{T} \cong \Z$. For $a \in E_{T,t}$, let $a_1,a_2$ denote the preimage of $a$ under $\chi_{T,t}$. Then 
\[
T(a) = \langle a_1,a_2\rangle \subset T,
\]
and we have
\[
X^a =  \begin{cases} \{N,S\}  &\mbox{if } (a_1,a_2) \neq (0,0) \\ 
S_\lambda & \mbox{if } (a_1,a_2) = (0,0)\end{cases}. 
\]
Let $x \in \zeta_{T,t}^{-1}(a)$ and write $x = x_1t_1 + x_2t_2$. We have
\[
L^2X^{t,x} = \begin{cases} \{N,S\}  &\mbox{if } (x_1,x_2) \notin \Z^2 \\ 
\{ \exp_T(x_1s_1+x_2s_2)\cdot z \, | \,z \in S_\lambda \} & \mbox{if } (x_1,x_2) \in \Z^2 \end{cases}
\]
where $N,S$ denote constant loops. A double loop $\exp_T(x_1s_1+x_2s_2)\cdot_\lambda z$, for $z \neq N,S$, wraps the first loop around the sphere $\lambda x_1$ times, and the second loop around $\lambda x_2$ times, where the loops run parallel to the equator. The direction of the loop corresponds to the sign of the $\lambda x_i$. 
\end{example}

\begin{lemma}\label{hard}
Let $\U$ be an open cover of $T \times T$ adapted to $\S(X)$. Let $(t,x),(t',y) \in \X^+ \times \t_\C$, with $a = \zeta_T(t,x)$ and $b = \zeta_T(t',y)$. We have the following.
\begin{enumerate}
\item If $V_{t,x} \cap V_{t',y} \neq \emptyset$, then either $X^b \subset X^a$ or $X^a \subset X^b$. 
\item If $V_{t,x} \cap V_{t',y} \neq \emptyset$ and $X^b \subset X^a$, then $L^2X^{t',y} \subset L^2X^{t,x}$.
\end{enumerate}
\end{lemma}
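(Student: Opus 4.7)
The plan is to begin by translating everything to coordinates relative to the chosen basis. Writing $x = x_1 t_1 + x_2 t_2$ and $y = y_1 t_1' + y_2 t_2'$ with $x_i, y_i \in \t$, I would observe from Definition \ref{defo} that $V_{t,x}$ and $V_{t',y}$ are the $\xi_T$-images of $\X^+ \times V_{x_1,x_2}$ and $\X^+ \times V_{y_1,y_2}$. Since a vector in $\t_\C$ has unique coordinates in the basis $\{t_1'',t_2''\}$ for any $t'' \in \X^+$, it follows that
\[
V_{t,x} \cap V_{t',y} \neq \emptyset \iff V_{x_1,x_2} \cap V_{y_1,y_2} \neq \emptyset \text{ in } \t \times \t.
\]
This reduces both parts of the lemma to statements about the real cover $\U$.

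For part (1), projecting this nonempty intersection via $\exp_{T\times T}$ gives $U_{a_1,a_2} \cap U_{b_1,b_2} \neq \emptyset$, where $a_i = \exp_T(x_i)$ and $b_i = \exp_T(y_i)$. The second property of an adapted cover then yields, after possibly swapping labels, $(a_1,a_2) \leq_{\S(X)} (b_1,b_2)$. By Remark \ref{pusha} we have $T(a) = \langle a_1, a_2 \rangle$ and $T(b) = \langle b_1, b_2 \rangle$. For any $z \in X^b$, the isotropy $T_z$ lies in $\S(X)$ and contains $T(b)$, so $(b_1,b_2) \in T_z \times T_z$; the ordering then forces $(a_1,a_2) \in T_z \times T_z$, i.e. $T(a) \subset T_z$, so $z \in X^a$. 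Hence $X^b \subset X^a$.

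For part (2), I use the description in Remark \ref{pushy}: any $\gamma \in L^2X^{t',y}$ has the form $\gamma(s_1,s_2) = \exp_T(y_1 s_1 + y_2 s_2) \cdot z'$ with $z' = \gamma(0,0) \in X^b \subset X^a$. To place $\gamma$ in $L^2X^{t,x}$ it suffices, again by Remark \ref{pushy}, to show $\gamma(s_1,s_2) = \exp_T(x_1 s_1 + x_2 s_2)\cdot z'$, which is equivalent to
\[
\exp_T\bigl((y_1 - x_1)s_1 + (y_2 - x_2)s_2\bigr) \cdot z' = z' \text{ for all } (s_1,s_2) \in \bT^2.
\]
Setting $K := T_{z'} \in \S(X)$, this condition is in turn equivalent to the linear statement $y_i - x_i \in \Lie(K)$ for $i = 1,2$.

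The main obstacle is verifying this linear condition, and the key tool is Lemma \ref{hard3}. Since $z' \in X^b \subset X^a$, both $T(b)$ and $T(a)$ are contained in $K$, so $(a_1,a_2),(b_1,b_2) \in K \times K$. Combined with the nonemptiness $V_{x_1,x_2} \cap V_{y_1,y_2} \neq \emptyset$ from the first reduction, Lemma \ref{hard3} applies and places $(x_1,x_2)$ and $(y_1,y_2)$ in a common connected component of
\[
\exp_{T \times T}^{-1}(K \times K) = \check{T}\times \check{T} + \Lie(K) \times \Lie(K).
\]
Since the components of this set are exactly the cosets of $\Lie(K) \times \Lie(K)$, this forces $y_i - x_i \in \Lie(K)$, as needed. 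The only delicate bookkeeping is keeping straight that the coordinates $x_i$ and $y_i$ are taken in two different bases $t$ and $t'$; this is handled cleanly by the remark in Definition \ref{defo} that $V_{t,x}$ depends only on $(x_1,x_2) \in \t \times \t$ and not on the spectral parameter.
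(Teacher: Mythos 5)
Your proof is correct and takes essentially the same route as the paper: reduce to the real cover $\U$ via $\xi_T$, invoke the second property of an adapted cover for part (1), and invoke Lemma \ref{hard3} together with the explicit description from Remark \ref{pushy} for part (2). One small slip: the identity $\exp_{T\times T}^{-1}(K\times K) = \check{T}\times\check{T} + \Lie(K)\times\Lie(K)$ only holds when $K$ is connected, but this does not affect the argument since the relevant fact is merely that the connected components of $\exp_{T\times T}^{-1}(K\times K)$ are cosets of $\Lie(K)\times\Lie(K)$, which is what you actually use.
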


\begin{proof}
Write $x = x_1t_1 + x_2t_2$, $y = y_1t_1' + y_2t_2'$ and $a_i = \exp_T(x_i)$, $b_i = \exp_T(y_i)$. Since $V_{t,x} \cap V_{t',y} \neq \emptyset$, we have
\[
V_{x_1,x_2} \cap V_{y_1,y_2} \neq \emptyset
\]
by definition. This implies that 
\[
U_{a_1,a_2} \cap U_{b_1,b_2} \neq \emptyset,
\]
so by the second property of an adapted cover, either $(a_1,a_2) \leq (b_1,b_2)$ or $(b_1,b_2) \leq (a_1,a_2)$. This implies that either $X^b \subset X^a$ or $X^a \subset X^b$, which yields the first part of the result.

For the second part, assume that $X^b \subset X^a$, and let $\gamma \in L^2X^{t',y}$. Let $z = \gamma(0,0)$. By the description in Remark \ref{pushy}, we have
\[
\gamma(s_1,s_2) = \exp_T(y_1s_1 + y_2 s_2)\cdot z.
\]
Let $K \subset T$ be the isotropy group of $z \in X^b \subset X^a$, so that $K \in \S(X)$ and $a_i,b_i \in K$ for $i = 1,2$. The condition 
\[
V_{x_1,x_2} \cap V_{y_1,y_2} \neq \emptyset
\]
implies by Lemma \ref{hard3} that $(x_1,x_2)$ and $(y_1,y_2)$ lie in the same component of $\exp_{T\times T}^{-1}(K\times K)$, since $\U$ is adapted to $\S(X)$. Therefore, $(x_1 - y_1, x_2 - y_2)$ lies in the identity component of $\exp_{T\times T}^{-1}(K\times K)$, which is equal to $\mathrm{Lie}(K) \times \mathrm{Lie}(K)$. This implies that $z$ is fixed by $\exp_T((x_1-y_1)r_1)$ and $\exp_T((x_2-y_2)r_2)$ for all $r_1,r_2 \in \R$.

We can now write
\[
\begin{array}{rcl}
\gamma(s_1,s_2) &=& \exp_T( y_1 s_1 +  y_2 s_2)\cdot z \\
&=& \exp_T(y_1 s_1+  y_2 s_2) \cdot (\exp_T((x_1 - y_1)s_1 + (x_2 - y_2)s_2)\cdot z) \\
&=& \exp_T(x_1 s_1 + x_2 s_2)\cdot z,
\end{array}
\]
which is a loop in $L^2X^{t,x}$ since $z \in X^a$. This yields the second part of the result.
\end{proof}

\begin{lemma}\label{hay}
Let $\U$ be an open cover adapted to $\S(X)$. If $(t',y) \in V_{t,x}$, then $L^2X^{t',y} \subset L^2X^{t,x}$.
\end{lemma}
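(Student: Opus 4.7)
The plan is to reduce the statement directly to Lemma \ref{hard}, using Lemma \ref{hard2} to ensure we land in the correct case of that lemma (namely, $X^b \subset X^a$ rather than $X^a \subset X^b$).

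First I would unpack the definitions. Write $x = x_1 t_1 + x_2 t_2$ and $y = y_1 t_1' + y_2 t_2'$, and set $a_i = \exp_T(x_i)$, $b_i = \exp_T(y_i)$, so that $a = \zeta_T(t,x)$ and $b = \zeta_T(t',y)$. By Definition \ref{defo}, the hypothesis $(t',y) \in V_{t,x} = \xi_T(\X^+ \times V_{x_1,x_2})$ is equivalent to $(y_1,y_2) \in V_{x_1,x_2}$. Since $V_{x_1,x_2}$ is by Definition \ref{siss} the component of $\exp_{T\times T}^{-1}(U_{a_1,a_2})$ containing $(x_1,x_2)$, we obtain $(b_1,b_2) = \exp_{T \times T}(y_1,y_2) \in U_{a_1,a_2}$.

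Next, I would apply Lemma \ref{hard2} to this inclusion, which yields the ordering $(a_1,a_2) \leq_{\S(X)} (b_1,b_2)$. By definition of the ordering, every isotropy subgroup $K \in \S(X)$ containing $(b_1,b_2)$ also contains $(a_1,a_2)$, and combined with Remark \ref{pusha} this forces the inclusion $X^b \subset X^a$.

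Finally, observe that $(t',y)$ lies in both $V_{t,x}$ (by hypothesis) and $V_{t',y}$ (as $(y_1,y_2) \in V_{y_1,y_2}$ by definition), so $V_{t,x} \cap V_{t',y} \neq \emptyset$. Combined with $X^b \subset X^a$, this is exactly the hypothesis of the second assertion in Lemma \ref{hard}, which immediately gives $L^2X^{t',y} \subset L^2X^{t,x}$, as desired.

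There is no genuine obstacle here: all the real work has already been done in Lemmas \ref{hard2} and \ref{hard}. The only thing one must be careful about is to invoke the asymmetric Lemma \ref{hard2} rather than merely Lemma \ref{hard} on its own, since the latter alone only tells us that one of $L^2X^{t,x}, L^2X^{t',y}$ is contained in the other, without specifying which direction.
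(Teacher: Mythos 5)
Your argument is correct and follows essentially the same route as the paper's proof: unpack $(t',y) \in V_{t,x}$ to deduce $(b_1,b_2) \in U_{a_1,a_2}$, invoke Lemma \ref{hard2} to get the ordering $(a_1,a_2) \leq (b_1,b_2)$ and hence $X^b \subset X^a$, then apply the second part of Lemma \ref{hard}. Your final paragraph spelling out why the hypotheses of Lemma \ref{hard} are met (in particular that $V_{t,x} \cap V_{t',y} \ni (t',y)$ is nonempty) is slightly more explicit than the paper's terse closing line, but the content is identical.
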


\begin{proof}
Write $a = \zeta_T(t,x)$, $b = \zeta_T(t',y)$, $x = x_1t_1 + x_2t_2$, $y = y_1t_1' + y_2t_2'$, $b_i = \exp_T(y_i)$, and $a_i = \exp_T(x_i)$. Since $(t',y) \in V_{t,x}$, we have 
\[
(y_1,y_2) \in V_{x_1,x_2} \subset \t \times \t
\] 
and so 
\[
(b_1,b_2) \in U_{a_1,a_2} \subset T\times T.
\]
Therefore $(a_1,a_2) \leq (b_1,b_2)$ by Lemma \ref{hard2}, from which it follows that $X^b \subset X^a$, since $\U$ is adapted to $\S(X)$. Lemma \ref{hard} yields the result.
\end{proof}

\begin{example}\label{incex}
We examine the inclusions of Lemmas \ref{hard} and \ref{hay} in the example of the representation sphere $X = S_\lambda$. Let $(t,x), (t',y) \in \X^+ \times \t_\C$, write $x = x_1t_1+x_2t_2$, $y=y_1t_1' + y_2t_2'$, $a = \zeta_T(t,x)$ and $b = \zeta_T(t',y)$. If $V_{t,x} \cap V_{t,y} = \emptyset$, then by Definition \ref{defo} we have
\[
V_{x_1,x_2} \cap V_{y_1,y_2} \neq \emptyset.
\]
Therefore, by Example \ref{coverex}, at least one of $(x_1,x_2)$ and $(y_1,y_2)$ lies outside the lattice $\Z^2 \subset \R^2$. By Example \ref{loopex}, this means that either $L^2X^{t',y} = X^b = \{N,S\}$ or $L^2X^{t,x} = X^a = \{N,S\}$, and we clearly have either $X^b \subset X^a$ or $X^a \subset X^b$. If we assume that $X^b \subset X^a$, then since at least one of the spaces is equal to $\{N,S\}$, we must have $X^b = \{N,S\}$. Therefore, $L^2X^{t',y} \subset L^2X^{t,x}$. Note that if we had the additional hypothesis that $(t',y) \in V_{t,x}$, then this would imply that $(y_1,y_2) \in V_{x_1,x_2}$, and by the description in Example \ref{coverex} we would have $(y_1,y_2) \notin \Z^2$, which means that $X^b \subset X^a$. So, with this hypothesis, no assumption would be necessary.
\end{example}

\begin{proposition}\label{extend}
Let $Y \subset L^2X$ be a finite $\bT^2 \times T$-CW subcomplex. The inclusion $Y^{t,x} \subset Y$ induces an isomorphism of $\Z/2\Z$-graded $\O_{V_{t,x}}$-algebras
\[
\H^*_{\bT^2\times T}(Y)_{V_{t,x}} \cong \H^*_{\bT^2\times T}(Y^{t,x})_{V_{t,x}}.
\]
\end{proposition}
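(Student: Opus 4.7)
The plan is to reduce the assertion to a stalkwise check, then apply the fixed-point Theorem \ref{localisationn} twice combined with the inclusion of fixed loci supplied by Lemma \ref{hay}.

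First I would observe that a map of sheaves is an isomorphism if and only if it is an isomorphism on every stalk. The inclusion $i\colon Y^{t,x}\hookrightarrow Y$ is a map of finite $\bT^2\times T$-CW complexes (the subspace $Y^{t,x}$ inherits such a structure as the fixed set of a compact subgroup action on a $\bT^2\times T$-CW complex), and it induces a map of sheaves
\[
i^*\colon \H^*_{\bT^2\times T}(Y)_{V_{t,x}} \longrightarrow \H^*_{\bT^2\times T}(Y^{t,x})_{V_{t,x}}.
\]
Fix an arbitrary point $(t',y)\in V_{t,x}$. Theorem \ref{localisationn} applied to the finite $\bT^2\times T$-CW complexes $Y$ and $Y^{t,x}$ yields isomorphisms $\H^*_{\bT^2\times T}(Y)_{(t',y)}\cong \H^*_{\bT^2\times T}(Y^{t',y})_{(t',y)}$ and $\H^*_{\bT^2\times T}(Y^{t,x})_{(t',y)}\cong \H^*_{\bT^2\times T}((Y^{t,x})^{t',y})_{(t',y)}$, both induced by restriction along the inclusion of the $T(t',y)$-fixed subspace. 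By the naturality of these restriction maps with respect to $i$, I obtain a commutative diagram
\[
\begin{tikzcd}
\H^*_{\bT^2 \times T}(Y)_{(t',y)} \ar[r,"i^*_{(t',y)}"] \ar[d,"\sim"'] & \H^*_{\bT^2 \times T}(Y^{t,x})_{(t',y)} \ar[d,"\sim"'] \\
\H^*_{\bT^2 \times T}(Y^{t',y})_{(t',y)} \ar[r,"j^*_{(t',y)}"] & \H^*_{\bT^2 \times T}((Y^{t,x})^{t',y})_{(t',y)}
\end{tikzcd}
\]
where $j\colon (Y^{t,x})^{t',y}\hookrightarrow Y^{t',y}$ is the evident inclusion.

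The key geometric input is that $j$ is in fact an equality. Since $Y$ is a $\bT^2\times T$-subcomplex of $L^2X$, the fixed-point subspaces satisfy $Y^{t,x}=Y\cap L^2X^{t,x}$ and $Y^{t',y}=Y\cap L^2X^{t',y}$. Because $(t',y)\in V_{t,x}$, Lemma \ref{hay} gives $L^2X^{t',y}\subset L^2X^{t,x}$, whence $Y^{t',y}\subset Y^{t,x}$. Consequently $(Y^{t,x})^{t',y}=Y^{t,x}\cap Y^{t',y}=Y^{t',y}$, so $j$ is the identity map and $j^*_{(t',y)}$ is the identity. The commutative square then forces $i^*_{(t',y)}$ to be an isomorphism. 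Since this holds for every $(t',y)\in V_{t,x}$, the restriction of $i^*$ to $V_{t,x}$ is an isomorphism of sheaves of $\Z/2\Z$-graded $\O_{V_{t,x}}$-algebras.

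The only non-routine step is the naturality of the localization isomorphism with respect to the inclusion $i$, but this is built into the statement of Theorem \ref{localisationn}, which is itself proved by a Mayer--Vietoris-type argument on the $\bT^2\times T$-cells that is manifestly compatible with restriction along subcomplex inclusions. The rest of the argument is formal: reducing to stalks, applying the fixed-point theorem twice, and identifying $(Y^{t,x})^{t',y}$ with $Y^{t',y}$ via Lemma \ref{hay}.
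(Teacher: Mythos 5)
Your proof is correct and follows essentially the same route as the paper's: fix $(t',y)\in V_{t,x}$, use Lemma \ref{hay} to get $Y^{t',y}\subset Y^{t,x}$ (hence $(Y^{t,x})^{t',y}=Y^{t',y}$), and apply Theorem \ref{localisationn} twice — the paper packages this as a commutative triangle with vertex $\H^*_{\bT^2\times T}(Y^{t',y})_{V_{t,x}}$ and two-of-three for isomorphisms, which is exactly your square after the identification $(Y^{t,x})^{t',y}=Y^{t',y}$.
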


\begin{proof}
Let $(t',y) \in V_{t,x}$. Then $L^2X^{t',y} \subset L^2X^{t,x}$ by Lemma \ref{hay}, which implies that $Y^{t',y} \subset Y^{t,x}$. Consider the commutative diagram

\begin{equation}
\begin{tikzcd}
\H^*_{\bT^2\times T}(Y)_{V_{t,x}} \ar[dr] \ar[rr] && \H^*_{\bT^2\times T}(Y^{t,x})_{V_{t,x}} \ar[dl] \\
&\H^*_{\bT^2\times T}(Y^{t',y})_{V_{t,x}}, &
\end{tikzcd}
\end{equation}
\noindent
which is induced by the evident inclusions. Taking stalks at $(t',y)$, Theorem \ref{localisationn} implies that the two diagonal maps are isomorphisms, and so the horizontal map is also an isomorphism. The isomorphism of the proposition follows. 
\end{proof}

\begin{remark} 
The subspace $L^2X^{t,x} \subset L^2X$ is a $\bT^2 \times T$-equivariant CW subcomplex, consisting of those equivariant cells in $L^2X$ whose isotropy group contains $T(t,x)$. In fact, it follows easily from Lemma \ref{equal} that $L^2X^{t,x}$ is a finite $\bT^2 \times T$-CW complex, since $X$ is finite. 
\end{remark}

\begin{corollary}\label{stalkss}
The inclusion $L^2X^{t,x} \subset L^2X$ induces an isomorphism of $\Z/2\Z$-graded $\O_{V_{t,x}}$-algebras
\[
\H^*_{\bT^2 \times T}(L^2X)_{V_{t,x}} \cong \H^*_{\bT^2 \times T}(L^2X^{t,x})_{V_{t,x}},
\]
natural in $X$. In particular, we have an isomorphism of stalks
\[
\H^*_{\bT^2 \times T}(L^2X)_{(t,x)} \cong \H^*_{\bT^2 \times T}(L^2X^{t,x})_{(t,x)}.
\]
\end{corollary}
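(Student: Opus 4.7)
The plan is to compute both sides as inverse limits, apply Proposition \ref{extend} termwise, and finish with a cofinality argument.

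First I would use that inverse limits of sheaves are computed section-wise, so that restriction to the open subset $V_{t,x}$ commutes with the inverse limit of Definition \ref{varlimm}. This gives
\[
\H^*_{\bT^2 \times T}(L^2X)_{V_{t,x}} \cong \varprojlim_{Y \subset L^2X} \H^*_{\bT^2 \times T}(Y)_{V_{t,x}},
\]
where $Y$ ranges over the finite $\bT^2 \times T$-CW subcomplexes of $L^2X$. Then I would apply Proposition \ref{extend} to each $Y$, obtaining isomorphisms
\[
\H^*_{\bT^2 \times T}(Y)_{V_{t,x}} \cong \H^*_{\bT^2 \times T}(Y^{t,x})_{V_{t,x}}
\]
induced by the inclusions $Y^{t,x} \hookrightarrow Y$. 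Because these inclusions are natural with respect to inclusions $Y \subset Y'$ (note $Y^{t,x} = Y \cap L^2X^{t,x}$), the isomorphisms intertwine the transition maps, yielding an isomorphism of inverse systems and hence
\[
\varprojlim_{Y \subset L^2X} \H^*_{\bT^2 \times T}(Y)_{V_{t,x}} \;\cong\; \varprojlim_{Y \subset L^2X} \H^*_{\bT^2 \times T}(Y^{t,x})_{V_{t,x}}.
\]

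Next, I would invoke the remark immediately preceding the corollary, which says that $L^2X^{t,x}$ is itself a \emph{finite} $\bT^2 \times T$-CW subcomplex of $L^2X$. Hence the subfamily $\{Y \, | \, L^2X^{t,x} \subset Y\}$ is cofinal in the directed set of finite subcomplexes: any finite $Y$ is contained in $Y \cup L^2X^{t,x}$, which is still finite. For every $Y$ in this cofinal subfamily, $Y^{t,x} = Y \cap L^2X^{t,x} = L^2X^{t,x}$, so the inverse system on the right is constant with value $\H^*_{\bT^2 \times T}(L^2X^{t,x})_{V_{t,x}}$ (which, as $L^2X^{t,x}$ is finite, is equally well the object produced by Definition \ref{bochum} restricted to $V_{t,x}$, since the inverse system of Definition \ref{varlimm} applied to $L^2X^{t,x}$ has $L^2X^{t,x}$ itself as terminal object). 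This yields the desired isomorphism on $V_{t,x}$. The stalk statement follows immediately since $(t,x) \in V_{t,x}$ by construction, and naturality in $X$ is inherited from the naturality of Proposition \ref{extend} together with functoriality of the inverse limit.

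The only delicate point is the cofinality step, which depends crucially on the fact that $L^2X^{t,x}$ is finite—this in turn rests on Lemma \ref{equal}, which identifies $L^2X^{t,x}$ $\nu$-equivariantly with $X^a$. Everything else is formal: that restriction to an open subset preserves limits of sheaves, and that termwise isomorphisms of inverse systems yield isomorphisms of their limits.
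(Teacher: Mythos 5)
Your proposal is correct and takes essentially the same approach as the paper: restrict the inverse limit to $V_{t,x}$, apply Proposition \ref{extend} termwise, and conclude via cofinality using that $L^2X^{t,x}$ is itself a finite subcomplex (so it is a terminal object in the system of $Y^{t,x}$'s). The only detail worth adding is that the paper's naturality argument explicitly refines the cover $\U$ to one adapted to $\S(f)$, which matters because Proposition \ref{extend} depends on the choice of adapted cover.
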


\begin{proof}
It follows from Definition \ref{varlimm} and Proposition \ref{extend} that 
\[
\begin{array}{rcl}
\H^*_{\bT^2 \times T}(L^2X)_{V_{t,x}} &=& \varprojlim_{Y \subset L^2X} \H^*_{\bT^2 \times T}(Y)_{V_{t,x}} \\
&\cong& \varprojlim_{Y \subset L^2X} \H^*_{\bT^2 \times T}(Y^{t,x})_{V_{t,x}} \\
&=& \H^*_{\bT^2 \times T}(L^2X^{t,x})_{V_{t,x}}.
\end{array}
\]
The final equality holds by definition of the inverse limit, since each $Y^{t,x}$ is contained in the finite $\bT^2 \times T$-CW subcomplex $L^2X^{t,x} \subset L^2X$. One shows naturality with respect to a $T$-equivariant map $f: X \to Y$ by refining the cover $\U$ so that it is adapted to $\S(f)$, and by using the functoriality of the loop space functor and of Borel-equivariant cohomology. The second statement follows immediately by definition of the stalk.
\end{proof}

\begin{remark}\label{coherent} 
It follows from Corollary \ref{stalkss} that $\H^*_{\bT^2 \times T}(L^2X)$ is a coherent sheaf of $\O_{\X^+ \times \t_\C}$-modules, since $L^2X^{t,x}$ is a finite $\bT^2 \times T$-CW complex.
\end{remark}

\section{The construction of the equivariant sheaf $\E^*_T(X)$ over $E_T$}\label{brew}

In this section, we begin by showing that $\H^*_{\bT^2 \times T}(L^2X)$ depends only on loops contained in the subspace
\[
\bigcup_{(t,x)\in \X^+ \times \t_\C} L^2X^{t,x} 
\]
of $L^2X$. This is an important feature of our construction which will make computations much more tractable. We then show that the action of the extended double loop group $\widetilde{L^2T}$ on $L^2X$, which was defined in Remark \ref{wei1}, induces an action of the Weyl group 
\[
W_{\widetilde{L^2T}} = \SL_2(\Z) \ltimes \check{T}^2
\]
on the sheaf $\H^*_{\bT^2 \times T}(L^2X)$, which also carries a natural $\C^\times$-action. Finally, we define the $\C^\times \times \SL_2(\Z)$-equivariant sheaf $\E^*_T(X)$ over $E_T$ as the $\check{T}^2$-invariants of the pushforward of $\H^*_{\bT^2 \times T}(L^2X)$ along $\zeta_T$. In the next section we will show that this is a $T$-equivariant elliptic cohomology theory in an appropriate sense. 

\begin{definition}
Let $X$ be a finite $T$-CW complex, and let $\D(X)$ denote the set of finite $\bT^2 \times T$-CW subcomplexes of $L^2X$ generated by 
\[
\{L^2X^{t,x}\}_{(t,x) \in \X^+ \times \t_\C}
\]
under finite unions and intersections. The set $\D(X)$ is partially ordered by inclusion.
\end{definition}

\begin{theorem}\label{yess}
Let $X$ be a finite $T$-CW complex. There is an isomorphism of $\Z/2\Z$-graded $\O_{\X^+ \times \t_\C}$-algebras
\[
\H^*_{\bT^2 \times T}(L^2X) \cong \varprojlim_{Y \in \D(X)} \H^*_{\bT^2 \times T}(Y)_{\X^+ \times \t_\C}
\]
natural in $X$.
\end{theorem}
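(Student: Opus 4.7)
The plan is to use the open cover $\{V_{t,x}\}_{(t,x)\in \X^+ \times \t_\C}$ of Definition \ref{defo}, where I fix once and for all a cover $\U$ of $T\times T$ adapted to $\S(X)$ (which exists by Lemma \ref{construu}). Since both objects in the statement are sheaves of $\O_{\X^+\times \t_\C}$-algebras, it suffices to produce a natural comparison map and verify it is an isomorphism after restriction to each $V_{t,x}$. The natural map is canonical: let $\F(L^2X)$ denote the poset of all finite $\bT^2\times T$-CW subcomplexes of $L^2X$; the inclusion $\D(X) \hookrightarrow \F(L^2X)$ induces, by the universal property of inverse limits, a comparison map from $\H^*_{\bT^2\times T}(L^2X)$ to $\varprojlim_{Y\in \D(X)} \H^*_{\bT^2\times T}(Y)_{\X^+\times \t_\C}$.

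Next I would fix $(t,x) \in \X^+ \times \t_\C$ and restrict both sides to $V_{t,x}$. For the left-hand side this identification is exactly Corollary \ref{stalkss}, giving $\H^*_{\bT^2\times T}(L^2X^{t,x})_{V_{t,x}}$. For the right-hand side, restriction to an open set commutes with inverse limits of presheaves, so the restriction is $\varprojlim_{Y\in \D(X)}\H^*_{\bT^2\times T}(Y)_{V_{t,x}}$. Applying Proposition \ref{extend} to each $Y \in \D(X)$, functorially in $Y$, rewrites this as $\varprojlim_{Y \in \D(X)} \H^*_{\bT^2\times T}(Y^{t,x})_{V_{t,x}}$. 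The crucial observation is that $L^2X^{t,x}$ itself belongs to $\D(X)$ by definition, and since $\D(X)$ is closed under finite unions, the subsystem $\{Y \in \D(X) : Y \supset L^2X^{t,x}\}$ is cofinal in $\D(X)$. For any such $Y$ we have $Y^{t,x} = Y \cap L^2X^{t,x} = L^2X^{t,x}$, so the cofinal piece of the inverse system is constant with value $\H^*_{\bT^2\times T}(L^2X^{t,x})_{V_{t,x}}$.

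Both identifications on the left and right are induced by the same family of pullbacks to $L^2X^{t,x}$, so the comparison map restricts to the identity on $V_{t,x}$, and the local isomorphisms glue uniquely to a global isomorphism. For naturality in $X$, given a map $f\colon X \to X'$ I would pass to a common refinement of covers adapted to $\S(X)$ and $\S(X')$ (which is simultaneously adapted to $\S(f)$ by Lemma \ref{construu}), and use the functoriality of $L^2(-)$, of $Y \mapsto Y^{t,x}$, and of Borel-equivariant cohomology, exactly as in the naturality statement of Corollary \ref{stalkss}.

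The main obstacle is the bookkeeping around the interaction of inverse limits with open restriction and with passage to the fixed-point subcomplex. Everything works because (i) restriction commutes with limits at the level of presheaves, so no sheafification issue arises before taking stalks, (ii) Proposition \ref{extend} yields an \emph{isomorphism} of sheaves over $V_{t,x}$, not merely on stalks, so passage to the limit preserves it, and (iii) the closure of $\D(X)$ under finite unions is precisely what guarantees the cofinal subsystem used to stabilize the limit. Once these three points are in place, the argument is essentially a refinement of the proof of Corollary \ref{stalkss}, with the inverse system $\F(L^2X)$ replaced by its subposet $\D(X)$.
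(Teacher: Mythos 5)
Your proof is correct. The underlying comparison map you construct — the restriction $\varprojlim_{\F(L^2X)} \to \varprojlim_{\D(X)}$ induced by the poset inclusion — is the same morphism the paper produces, but your route to showing it is an isomorphism is organized differently. The paper introduces the auxiliary subcomplex $S := \bigcup_{(t,x)} L^2X^{t,x}$, shows that the map to $\varprojlim_{Y} \H^*_{\bT^2\times T}(Y\cap S)_{\X^+\times\t_\C}$ is an isomorphism on stalks via Corollary \ref{stalkss}, and then separately identifies that target with $\varprojlim_{\D(X)}$ by a cofinality argument (every finite subcomplex of $S$ sits inside a finite union of the $L^2X^{t,x}$). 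You avoid the intermediate object $S$ entirely: you check the comparison map over the open sets $V_{t,x}$ (which is a little stronger than checking on stalks), identify the left side via Corollary \ref{stalkss}, and identify the right side via Proposition \ref{extend} together with cofinality of $\{Y \in \D(X) : Y \supset L^2X^{t,x}\}$ inside $\D(X)$. Both arguments exploit closure of $\D(X)$ under finite unions, but in different places. Your version is arguably slightly cleaner in that it keeps the whole argument inside the index poset $\D(X)$ rather than passing through the geometric union $S$, and it makes explicit where (ii) the sheaf-level (not merely stalk-level) content of Proposition \ref{extend} is being used. One small phrasing issue: at the end you speak of gluing the local isomorphisms, but since the comparison map is already defined globally, what you are actually invoking is that a global morphism of sheaves which restricts to an isomorphism on each member of an open cover is itself an isomorphism — no gluing step is needed.
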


\begin{proof}
Consider the union
\[
S := \bigcup_{(t,x) \in \X^+ \times \t_\C} L^2X^{t,x}.
\]
Notice that $S$ is a $\bT^2 \times T$-CW subcomplex of $L^2X$. For each finite $\bT^2 \times T$-CW subcomplex $Y \subset L^2X$, we have an inclusion $Y \cap S \hookrightarrow Y$. The induced map of $\Z/2\Z$-graded $\O_{\X^+ \times \t_\C}$-algebras
\begin{equation}\label{maps1}
\varprojlim_{Y \subset L^2X} \H^*_{\bT^2 \times T}(Y)_{\X^+ \times \t_\C} \to \varprojlim_{Y \subset L^2X} \H^*_{\bT^2 \times T}(Y \cap S)_{\X^+ \times \t_\C}
\end{equation}
is natural in $X$, by the functoriality of Borel-equivariant cohomology. Let $(t,x)$ be an arbitrary point. By Corollary \ref{stalkss}, the map \eqref{maps1} induces an isomorphism of stalks at $(t,x)$ because $S$ contains $L^2X^{t,x}$. Therefore, the map \eqref{maps1} is an isomorphism of sheaves.

It is clear that the set $\{S \cap Y \, | \, Y \subset L^2X \, \text{finite}\}$ is equal to the set of all finite equivariant subcomplexes of $S$. Therefore, the target of map \eqref{maps1} is equal to the inverse limit
\begin{equation}\label{mapsss1}
\varprojlim_{Y \subset S} \H^*_{\bT^2 \times T}(Y)_{\X^+ \times \t_\C}
\end{equation}
over all finite equivariant subcomplexes $Y \subset S$. Any such $Y$, since it is finite, is contained in the union of finitely many spaces of the form $L^2X^{t,x}$, which is also a finite $\bT^2 \times T$-CW complex. Therefore, by definition of the inverse limit, \eqref{mapsss1} is equal to 
\[
\varprojlim_{Y \in \D(X)} \H^*_{\bT^2 \times T}(Y)_{\X^+ \times \t_\C},
\]
which completes the proof.
\end{proof}

\begin{remark}
Recall from the introduction to the paper that the subspace $L^2X^{gh} \subset L^2X$ of ghost maps consists precisely of those maps whose image is contained in a single $T$-orbit. We showed in Remark \ref{pushy} that $L^2X^{t,x}$ is a subspace of $L^2X^{gh}$ for all $(t,x) \in \X^+ \times \t_\C$. Thus, Theorem \ref{yess} implies in particular that $\H^*_{\bT^2 \times T}(L^2X)$ depends only on the subspace of ghost maps $L^2X^{gh} \subset L^2X$, as does Rezk's construction in Section 5 of \cite{Rezk}. 
\end{remark}

\begin{remark}
Note that Theorem \ref{yess} allows us to define $\H^*_{\bT^2 \times T}(L^2X)$ without the need for a $\bT^2 \times T$-CW complex structure on $L^2X$. 
\end{remark}

\begin{remark}\label{silmaril}
Recall that $\C^\times$ acts on $\X^+ \times \t_\C$ as in Remarks \ref{ulmo} and \ref{melkor}. For a finite $\bT^2 \times T$-CW complex $Y$, we may equip $\H^*_{\bT^2 \times T}(Y)_{\X^+ \times \t_\C}$ with a $\C^\times$-equivariant structure just as in Remark \ref{ainur}. This induces a $\C^\times$-equivariant structure on the inverse limit $\H_{\bT^2 \times T}(L^2X)$. Explicitly, if $c \in H^i(E(\bT^2 \times T) \times_{\bT^2 \times T} Y; \C)$ and $f \in \O_{\X^+ \times \t_\C}(\lambda \cdot U)$ such that $f(\lambda\cdot(t,x)) = \lambda^j f(t,x)$, then the action of $\lambda \in \C^\times$ sends $c \otimes f$ to $\lambda^{i+j} c \otimes f$.
\end{remark}

\begin{theorem}\label{actyon1}
Let $X$ be a finite $T$-CW complex, and recall from Section \ref{water} the extended double loop group 
\[
\widetilde{L^2T} = (\SL_2(\Z) \ltimes \bT^2) \ltimes L^2T
\]
and the Weyl group 
\[
W_{\widetilde{L^2T}} = \SL_2(\Z) \ltimes \check{T}^2
\]
corresponding to the maximal torus $\bT^2 \times T \subset \widetilde{L^2T}$. Recall the action map 
\[
\alpha: W_{\widetilde{L^2T}} \times \X^+ \times \t_\C \to \X^+ \times \t_\C
\]
given by
\[
\alpha((A, m, t,x)) = (At, x + mt)
\]
and let $\pi$ denote the projection map $W_{\widetilde{L^2T}} \times \X^+ \times \t_\C \twoheadrightarrow \X^+ \times \t_\C$.
The action of $\widetilde{L^2T}$ on $L^2X$ induces a $W_{\widetilde{L^2T}}$-equivariant structure
\[
I: \alpha^* \H^*_{\bT^2 \times T}(L^2X) \to \pi^* \H^*_{\bT^2 \times T}(L^2X)
\]
on $\H^*_{\bT^2 \times T}(L^2X)$ which commutes with the $\C^\times$-equivariant structure.
\end{theorem}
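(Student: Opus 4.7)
The plan is to define, for each $w = (A,m) \in W_{\widetilde{L^2T}}$, an isomorphism $I_w: \alpha_w^* \H^*_{\bT^2 \times T}(L^2X) \to \H^*_{\bT^2 \times T}(L^2X)$ using the canonical lift $\tilde{w} = (A, 0, \gamma_m) \in \widetilde{L^2T}$ with $\gamma_m(s) = ms$, and then to assemble these into the single map $I$ on $W_{\widetilde{L^2T}} \times \X^+ \times \t_\C$ (with $I$ on a fiber over $w$ given by $I_w$).

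First I would observe that $\tilde{w}$ lies in $N_{\widetilde{L^2T}}(\bT^2 \times T)$ and that the conjugation action $c_{\tilde{w}}$ on $\bT^2 \times T$ is precisely the Weyl action $(r,t) \mapsto (Ar, t+mr)$ of Remark \ref{ulmo}. For each finite $\bT^2 \times T$-CW subcomplex $Y \subset L^2X$, the homeomorphism $\phi_{\tilde{w}}$ restricts to a $c_{\tilde{w}}$-twisted equivariant homeomorphism $Y \to \phi_{\tilde{w}}(Y)$, which descends to a map of Borel constructions covering $Bc_{\tilde{w}}$ and hence induces a ring isomorphism
$$w^*: H^*_{\bT^2 \times T}(\phi_{\tilde{w}}(Y)) \to H^*_{\bT^2 \times T}(Y)$$
that is $c_{\tilde{w}}^*$-twisted linear over $H_{\bT^2 \times T}$. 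Under the Chern-Weil isomorphism of Remark \ref{remmie}, the action of $c_{\tilde{w}}^*$ on $H_{\bT^2 \times T} \cong \Sym((\Lie(\bT^2 \times T)_\C)^\vee)$ is the dual of the linearization of $c_{\tilde{w}}$, and by direct identification this agrees with pullback along $\alpha_w$ restricted to polynomial functions on $\X^+ \times \t_\C$. The twists thus cancel when I tensor $w^*$ with $\alpha_w^*$, giving a well-defined $\O_{\X^+ \times \t_\C}$-algebra map
$$H^*_{\bT^2 \times T}(\phi_{\tilde{w}}(Y)) \otimes_{H_{\bT^2 \times T}} \O_{\X^+ \times \t_\C}(\alpha_w(U)) \to H^*_{\bT^2 \times T}(Y) \otimes_{H_{\bT^2 \times T}} \O_{\X^+ \times \t_\C}(U)$$
natural in $Y$ and $U$. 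Since $\phi_{\tilde{w}}$ is a homeomorphism permuting the finite $\bT^2 \times T$-CW subcomplexes of $L^2X$, these maps assemble under the inverse limit of Definition \ref{varlimm} into the sheaf map $I_w$.

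The cocycle condition then follows from a direct calculation using the group law \eqref{cute} of $\widetilde{L^2T}$, which shows that the assignment $w \mapsto \tilde{w}$ is itself a group homomorphism $W_{\widetilde{L^2T}} \hookrightarrow \widetilde{L^2T}$ (so $\widetilde{ww'} = \tilde{w}\tilde{w}'$); the cocycle identity for $I$ then follows from the functoriality of Borel-equivariant cohomology and of holomorphic pullback. Compatibility with the $\C^\times$-structure of Remark \ref{silmaril} is straightforward: the scalar $\C^\times$-action on $\X^+ \times \t_\C$ commutes with the linear $W_{\widetilde{L^2T}}$-action $\alpha$, while the cohomological $\C^\times$-twist depends only on cohomological degree, which is preserved by the ring automorphism induced by $\phi_{\tilde{w}}$.

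The main obstacle I expect is the bookkeeping in step two, namely verifying cleanly that the $c_{\tilde{w}}^*$-twist in $w^*$ really does match the $\alpha_w^*$-twist in the holomorphic pullback so that $w^* \otimes \alpha_w^*$ is well-defined over $H_{\bT^2 \times T}$. This amounts to checking that the Weyl action on $\bT^2 \times T$, its complex-linear extension to $\Lie(\bT^2 \times T)_\C$, the restriction of that extension to $\X^+ \times \t_\C$, and the induced action of $W_{\widetilde{L^2T}}$ on $H_{\bT^2 \times T}$ via Chern-Weil all agree under the relevant canonical identifications; this is essentially a naturality statement for Chern-Weil applied to the automorphism $c_{\tilde{w}}$ of $\bT^2 \times T$.
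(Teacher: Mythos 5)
Your proposal follows the same overall strategy as the paper's proof: lift $w=(A,m)\in W_{\widetilde{L^2T}}$ to the normalizer element $\tilde w=(A,0,ms)\in\widetilde{L^2T}$, exploit twisted $\bT^2\times T$-equivariance of the action map, cancel the $c_{\tilde w}^*$-twist on $H_{\bT^2\times T}$ against the holomorphic pullback $\alpha_w^*$ via naturality of Chern--Weil, pass to the inverse limit, and obtain the cocycle condition from the fact that $w\mapsto\tilde w$ is the inclusion $\SL_2(\Z)\ltimes\check{T}^2\hookrightarrow\widetilde{L^2T}$ (hence a homomorphism). The paper's use of the model $(\bT^2\times T)\backslash(E\widetilde{L^2T}\times L^2X)\cong E(\bT^2\times T)\times_{\bT^2\times T}L^2X$ is just a packaging of the same twisted-equivariance bookkeeping.

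There is, however, a gap in the step where you assemble the maps into the inverse limit. You argue that $\phi_{\tilde w}$ ``permutes the finite $\bT^2\times T$-CW subcomplexes of $L^2X$,'' but by the convention set up just before Definition \ref{varlimm}, ``finite $\bT^2\times T$-CW subcomplexes of $L^2X$'' means subcomplexes of the Lewis--May--Steinberger replacement complex $Z$, which is only a $\bT^2\times T$-CW complex. The $\widetilde{L^2T}$-action, and hence $\phi_{\tilde w}$, lives on the genuine mapping space $\Map(\bT^2,X)$, not on $Z$, and there is no reason for $\phi_{\tilde w}$ to preserve the chosen CW structure on $Z$. The paper sidesteps this by first invoking Theorem \ref{yess}, which identifies $\H^*_{\bT^2\times T}(L^2X)$ with the inverse limit over the poset $\D(X)$ generated by the honest subspaces $L^2X^{t,x}\subset\Map(\bT^2,X)$ (which are themselves finite $\bT^2\times T$-CW complexes, no replacement needed), and then observes directly from the explicit formula that $(A,m)$ carries $L^2X^{t,x}$ homeomorphically onto $L^2X^{At,\,x+mt}$, hence permutes $\D(X)$. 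Your proof should therefore first pass to the inverse limit over $\D(X)$ via Theorem \ref{yess} before appealing to permutation of the indexing poset; as stated, the claim about finite CW subcomplexes of $Z$ is unjustified.
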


\begin{proof}
The group $\widetilde{L^2T}$ acts on $L^2X$ via
\[
(A,t,\gamma(s)) \cdot \gamma'(s) = \gamma(A^{-1}s-At)\cdot \gamma'(A^{-1}s-At).
\]
Define a left action of $\widetilde{L^2T}$ on $E\widetilde{L^2T} \times L^2X$ by  
\[
g\cdot (e,\gamma) = (e\cdot g^{-1},g\cdot \gamma)
\]
for all $g \in \widetilde{L^2T}$ and $(e,\gamma) \in E\widetilde{L^2T} \times L^2X$. Note that the maximal torus $\bT^2 \times T$ of $\widetilde{L^2T}$ acts freely on $E\widetilde{L^2T}$ via the action of $\widetilde{L^2T}$. Therefore, the quotient of $E\widetilde{L^2T} \times L^2X$ by the $\bT^2 \times T$-action is a model for the $\bT^2 \times T$-equivariant Borel construction
\[
(\bT^2 \times T) \backslash (E\widetilde{L^2T} \times L^2X) \cong E(\bT^2 \times T) \times_{\bT^2 \times T} L^2X.
\]
It follows that the action of $\widetilde{L^2T}$ on $E\widetilde{L^2T} \times L^2X$ induces an action of the Weyl group $W_{\widetilde{L^2T}}$ on 
\[ 
E(\bT^2 \times T) \times_{\bT^2 \times T} L^2X.
\]
On the subspace of fixed loops $L^2X^{t,x}$, the action of $(A,m) \in W_{\widetilde{L^2T}}$ induces a homeomorphism  
\[
E(\bT^2 \times T) \times_{\bT^2 \times T} L^2X^{t,x} \longrightarrow E(\bT^2 \times T) \times_{\bT^2 \times T} L^2X^{At,x+mt}
\]
for each $(t,x) \in \X^+ \times \t_\C$. Writing $x = x_1t_1 + x_2t_2$, the homeomorphism sends
\begin{equation}\label{earl1}
(e,\exp_{T}(x_1s_1 + x_2s_2)\cdot z) \mapsto (e\cdot (A,m)^{-1},\exp_{T}((x_1+m_1,x_2+m_2)A^{-1}(s_1,s_2))\cdot z).
\end{equation}
Note that  
\[
\exp_{T}((x_1+m_1,x_2+m_2)A^{-1}(s_1,s_2))\cdot z 
\]
does in fact lie in $L^2X^{At,x+mt}$, since
\[
x + mt = ((x_1+m_1)t_1, (x_2+m_2)t_2) = (x_1+m_1,x_2+m_2)A^{-1} A(t_1,t_2).
\]
There is an induced isomorphism on cohomology rings
\[
H^*_{\bT^2 \times T}(L^2X^{At,x+mt};\C) \longrightarrow H^*_{\bT^2 \times T}(L^2X^{t,x};\C)
\]
which induces an isomorphism of sheaves
\[
\alpha_{A,m}^* \H^*_{\bT^2 \times T}(L^2X^{At,x+mt})_{\X^+ \times \t_\C} \longrightarrow \pi_{A,m}^*\H^*_{\bT^2\times T}(L^2X^{t,x})_{\X^+ \times \t_\C}.
\]
Here $\alpha_{A,m}$ and $\pi_{A,m}$ are the components of $\alpha$ and $\pi$ corresponding to $(A,m)$. The isomorphism is compatible with the action on the base space because the Chern-Weil isomorphism is natural in $\bT^2 \times T$. In the same way, the action of $\widetilde{L^2T}$ on $L^2X$ induces an isomorphism 
\[
\alpha_{A,m}^* \H^*_{\bT^2 \times T}((A,m)\cdot Y)_{\X^+ \times \t_\C} \longrightarrow \pi^*_{A,m}\H^*_{\bT^2\times T}(Y)_{\X^+ \times \t_\C} 
\]
for each $Y \in \D(X)$, which yields a family of isomorphisms indexed over $\D(X)$, compatible with the inclusion of subcomplexes in $\D(X)$. By Theorem \ref{yess}, we therefore have an induced isomorphism
\[
I_{A,m}: \alpha_{A,m}^*\H^*_{\bT^2 \times T}(L^2X) \longrightarrow \pi^*_{A,m} \H^*_{\bT^2 \times T}(L^2X)
\]
of inverse limit sheaves. It is straightforward to verify that the union $I = \cup I_{A,m}$ of such isomorphisms satisfies the cocycle condition of Remark \ref{ainur} and commutes with the action of $\C^\times$.
\end{proof}

\begin{remark}
We will use $\iota_{t}$ to denote the inclusion of the fiber $\t_\C \hookrightarrow \X^+ \times \t_\C$ over $t \in \X^+$, relying on context to avoid confusion with the usage already defined in Notation \ref{nerf}. We have a commutative diagram of complex manifolds
\[
\begin{tikzcd}
\t_\C \ar[r,hook,"{\iota_t}"] \ar[d,two heads, "{\zeta_{T,t}}"] & \X^+ \times \t_\C \ar[d,two heads, "{\zeta_T}"] \\
E_{T,t} \ar[r,hook] & E_T. 
\end{tikzcd}
\]
The $\C^\times \times \SL_2(\Z)$-action on $E_T$ does not preserve the fiber over $t$. 
\end{remark}

The following definition is analogous to Definition 4.1 in Kitchloo's paper \cite{Kitch1}.

\begin{definition}\label{sherpa}
Let $X$ be a finite $T$-CW complex. We define the $\C^\times \times \SL_2(\Z)$-equivariant, coherent sheaf of $\Z/2\Z$-graded $\O_{E_T}$-algebras 
\[
\E^*_T(X) := ((\zeta_{T})_*\, \H^*_{\bT^2 \times T}(L^2X))^{\check{T}^2}.
\]
We also define the coherent sheaf of $\Z/2\Z$-graded $\O_{E_{T,t}}$-algebras 
\[
\E^*_{T,t}(X) :=  ((\zeta_{T,t})_*\, \iota_t^*\, \H^*_{\bT^2 \times T}(L^2X))^{\check{T}^2}.
\]
\end{definition}

\begin{remark}
By construction, we have that $\E_{T}^*(\pt)$ is equal to the $\C^\times \times \SL_2(\Z)$-equivariant structure sheaf
\[
\O_{E_T} = ((\zeta_T)_*\O_{\X^+ \times \t_\C})^{\check{T}^2} 
\]
of $E_T$. Similarly, $\E_{T,t}^*(\pt)$ is equal to the structure sheaf
\[
\O_{E_{T,t}} = ((\zeta_{T,t})_* \O_{\t_\C})^{\check{T}^2}
\]
of the fiber of $E_T$ over $t \in \X^+$. 
\end{remark}

\begin{example}
It is straightforward to compute $\E_T^*(X)$ when $T = e$ is the trivial group, for this implies that $\D(X) = \{X\}$. We obtain the $\C^\times \times \SL_2(\Z)$-equivariant sheaf whose value on an open subset $U \subset \X^+$ is 
\[
\E_e^*(X)(U) = H_e^*(X) \otimes_\C \O_{\X^+}(U).
\]
\end{example}

\begin{example}\label{modf}
In the case that $T = e$ and $X = \pt$ we obtain the $\C^\times \times \SL_2(\Z)$-equivariant structure sheaf $\O_{\X^+}$. 
\end{example}

\begin{remark}\label{modforms}
The $\C^\times \times \SL_2(\Z)$-action on $\E^*_T(X)$ allows one to extract a $\Z$-graded theory $Ell^*_T(X)$ such that $Ell^*_e(\pt)$ is the ring of weak modular forms. Consider the quotient map
\[
\kappa: E_T \twoheadrightarrow \C^\times \backslash E_T.
\]
The mapping $(t_1,t_2,x) \mapsto (t_1/t_2,x/t_2)$ induces an isomorphism of the quotient with 
\[
\check{T}^2 \backslash (\mathfrak{h} \times \t_\C)
\]
where $\mathfrak{h}$ is the upper half plane of complex numbers, and $m \in \check{T}^2$ acts by $m\cdot (\tau,x) = (\tau, x+m_1\tau + m_2)$. There is a residual action of $\SL_2(\Z)$ on the quotient space given by 
\[
\begin{pmatrix} a & b \\ c & d \end{pmatrix} \cdot (\tau,[x]) = \left(\frac{a\tau + b}{c\tau + d}, \frac{[x]}{c\tau + d}\right).
\]
For $k \in \Z$ and $\lambda \in \C^\times$, let $\Ell^k_T(X)$ denote the eigenspace of $\lambda^k$ in $\E^*_T(X)$. The sheaf $\Ell^k_T(X)$ is preserved by the action of $\SL_2(\Z)$ since $\C^\times$ commutes with $\SL_2(\Z)$. The pushforward of $\Ell^k_T(X)$ along $\kappa$ is also $\SL_2(\Z)$-equivariant, and we write 
\[
Ell^k_T(X) := (\Gamma \, \kappa_* \, \Ell^k_T(X))^{\SL_2(\Z)}
\]
for the vector space of $\SL_2(\Z)$-invariant global sections. We can now define the $\Z$-graded ring
\[
Ell^*_T(X) := \bigoplus_{k\in \Z} Ell^k_T(X).
\]
The map $X \to \pt$ induces a $Ell^*_T(\pt)$-algebra structure on $Ell^*_T(X)$ corresponding to the map of rings $Ell^*_T(\pt) \to Ell^*_T(X)$. By definition, the value of $Ell^{-2k}_T(\pt)$ is the space of holomorphic functions $f$ on $E_T$ satisfying the transformation property
\[
f(t_1,t_2,[x]) = f(at_1 + bt_2, ct_1 + dt_2, [x]) = (ct_1 + dt_2)^{-k} f(\frac{at_1 + bt_2}{ct_1 + dt_2}, 1, \frac{[x]}{ct_1 + dt_2}) 
\]
for all $\begin{pmatrix} a & b \\ c & d \end{pmatrix} \in \SL_2(\Z)$. The first equality holds by $\SL_2(\Z)$-invariance, and the second equality holds since $f \in Ell^{-2k}_T(\pt)$ means that $f(\lambda^2t,\lambda^2[x]) = \lambda^{-2k} f(t,x)$, by the last sentence of Remark \ref{silmaril}. Such a function $f$ determines and is determined by a function $h \in \O(\C^\times \backslash E_T)$ satisfying
\[
(c\tau + d)^k h(\tau,[y]) = h(\frac{a\tau + b}{c\tau + d}, \frac{[y]}{c\tau + d}),
\]
via the substitutions $\tau = t_1/t_2$ and $y = x/t_2$. In particular, $Ell^{-2k}_e(\pt)$ is exactly the space of weak modular forms of weight $k$.
\end{remark}

\section{A torus-equivariant elliptic cohomology theory}\label{salt}

In Definition \ref{sherpa} we defined a functor $\E^*_{T,t}$ on the category of finite $T$-CW complexes, and taking values in the category of $\Z/2\Z$-graded, coherent $\O_{E_{T,t}}$-algebras. Recall from Definition \ref{tear} that a suspension isomorphism for the reduced version $\widetilde{\E}^*_{T,t}$ of $\E^*_{T,t}$ is an isomorphism $\widetilde{\E}^{*+1}_{T,t}(S^1\wedge X) \cong \widetilde{\E}^{*}_{T,t}(X)$, which is natural in $X$ and where the $T$-action on $S^1$ is trivial. In this section, we construct such a map for all $t \in \X^+$ at once, using the description of $\H^*_{\bT^2 \times T}(L^2X)$ given in Theorem \ref{yess} and the suspension isomorphism of cohomology. Then, we give the main result of this section, which is that $\widetilde{\E}^*_{T,t}$ is a reduced $T$-equivariant elliptic cohomology theory in the sense of Definition \ref{tear}.

\begin{remark}
Let $X$ be a pointed finite $T$-CW complex. We regard the loop space $L^2(S^1 \wedge X)$ as a pointed $\bT^2 \times T$-CW complex with basepoint given by the loop $\gamma_\pt: \bT^2 \to \pt \hookrightarrow S^1 \wedge X$. Since the basepoint of $S^1 \wedge X$ is fixed by $T$, the loop $\gamma_\pt$ is fixed by $\bT^2 \times T$, and so $\gamma_\pt$ is contained in $L^2(S^1 \wedge X)^{t,x}$ for all $(t,x)$. Therefore, each $Y \in \D(S^1 \wedge X)$ is a pointed subcomplex of $L^2(S^1 \wedge X)$.
\end{remark}

\begin{lemma}\label{wedger}
For each $(t,x) \in \X^+ \times \t_\C$, we have an equality
\[
L^2(S^1\wedge X)^{t,x} = S^1 \wedge L^2X^{t,x}
\]
as subsets of $L^2(S^1 \wedge X)$.
\end{lemma}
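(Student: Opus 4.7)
My plan is to prove the equality by giving an explicit description of both sides via Remark \ref{pushy}, using the fact that $T$ acts trivially on the $S^1$ factor of $S^1 \wedge X$.

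First I would make precise the natural embedding
\[
S^1 \wedge L^2X \hookrightarrow L^2(S^1 \wedge X), \qquad z \wedge \gamma \longmapsto \big((s_1,s_2) \mapsto z \wedge \gamma(s_1,s_2)\big),
\]
through which $S^1 \wedge L^2X^{t,x}$ is regarded as a subset of $L^2(S^1 \wedge X)$. It is injective because one can recover $z$ and $\gamma$ from the image (the only collapses happen when $z$ or $\gamma$ is the basepoint, and both map to the constant loop at the basepoint). It is $\bT^2 \times T$-equivariant with the conventions of Remark \ref{wei1}, since $T$ acts trivially on $S^1$.

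Next I would characterize the left-hand side. By Remark \ref{pushy}, a loop $\gamma \in L^2(S^1 \wedge X)^{t,x}$ is determined by $\gamma(0,0) \in (S^1 \wedge X)^a$ (where $a = \zeta_T(t,x)$) and has the explicit form
\[
\gamma(s_1,s_2) \,=\, \exp_T(x_1 s_1 + x_2 s_2) \cdot \gamma(0,0).
\]
Since $T$ acts trivially on $S^1$ and $\wedge$ is compatible with the $T$-action on $X$, we have $(S^1 \wedge X)^a = S^1 \wedge X^a$, so $\gamma(0,0) = z_1 \wedge z_2$ with $z_1 \in S^1$ and $z_2 \in X^a$ (up to basepoint identifications). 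Hence
\[
\gamma(s_1,s_2) \,=\, z_1 \wedge \bigl(\exp_T(x_1 s_1 + x_2 s_2)\cdot z_2\bigr),
\]
which is exactly the image of $z_1 \wedge \gamma'$ under the embedding above, where $\gamma' \in L^2X^{t,x}$ is given by Remark \ref{pushy} applied to $X$ with $\gamma'(0,0) = z_2$. This shows $L^2(S^1 \wedge X)^{t,x} \subset S^1 \wedge L^2X^{t,x}$.

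For the reverse inclusion, a general element of $S^1 \wedge L^2X^{t,x}$ has the form $z_1 \wedge \gamma'$ with $\gamma'(s_1,s_2) = \exp_T(x_1s_1+x_2s_2)\cdot z_2$ for some $z_2 \in X^a$; its image is the loop $(s_1,s_2) \mapsto z_1 \wedge \exp_T(x_1s_1+x_2s_2) z_2$, which visibly satisfies the fixed-point characterization of Remark \ref{pushy} for $L^2(S^1 \wedge X)^{t,x}$ (its value at $(0,0)$ lies in $S^1 \wedge X^a = (S^1 \wedge X)^a$). No step looks like a serious obstacle; the only subtlety is being careful about basepoint identifications in the smash product, which are handled uniformly by observing that both sides contain the base loop $\gamma_\pt$ and agree on non-basepoint elements as described.
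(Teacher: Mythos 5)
Your argument is correct and lands on the same key facts as the paper's proof (that $T$ acts trivially on the $S^1$ factor, so the $S^1$-coordinate of a $T(t,x)$-fixed loop is constant, and that $(S^1\wedge X)^a = S^1\wedge X^a$). The only stylistic difference is the route: the paper first establishes the product version $L^2(S^1\times X)^{t,x} = S^1\times L^2X^{t,x}$ by unwinding the fixed-point condition directly (effectively re-running the computation behind Remark \ref{pushy}), and then observes that passing to the subspaces defining the smash product recovers the stated equality; you instead invoke Remark \ref{pushy} as a black box applied to the finite pointed $T$-CW complex $S^1\wedge X$, and use the explicit form $\gamma(s_1,s_2)=\exp_T(x_1s_1+x_2s_2)\cdot\gamma(0,0)$ together with $(S^1\wedge X)^a=S^1\wedge X^a$ to identify both sides directly. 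This is slightly more economical and equally rigorous; just make sure you note, as you do, that the embedding $S^1\wedge L^2X^{t,x}\hookrightarrow L^2(S^1\wedge X)$ is well defined and injective across basepoint identifications, which is the one place the smash product requires a little care.
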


\begin{proof}
Suppose that $\gamma$ is a loop in $L^2(S^1\times X)^{t,x}$ sending $s \mapsto (\gamma_1(s),\gamma_2(s))$. Write $\gamma(0) = (z_1,z_2)$ and $x = x_1t_1 + x_2t_2 \in \t_\C$. The loop $\gamma$ is fixed by $T(t,x)$ if and only if 
\[
(\gamma_1(s),\gamma_2(s)) = \exp_{T\times T}(x_1r_1+x_2r_2)\cdot (\gamma_1(s-r),\gamma_2(s-r)) =  (\gamma_1(s-r),\exp_{T\times T}(x_1r_1+x_2r_2)\cdot \gamma_2(s-r))
\]
for all $r,s \in \bT^2$, since $S^1$ is fixed by $T$. Setting $r = s$, one sees that this is true if and only if
\[
(\gamma_1(r),\gamma_2(r)) =  (\gamma_1(0),\exp_{T\times T}(x_1r_1+x_2r_2)\cdot \gamma_2(0)), 
\]
which holds if and only $\gamma_1$ is constant and $\gamma_2$ is in $L^2X^{t,x}$. Therefore, we have an equality
\[
L^2(S^1 \times X)^{t,x} = S^1 \times L^2X^{t,x}.
\]
To prove the equality of the lemma, consider that the image of $\gamma$ is contained in $0 \times X \cup S^1 \times \pt$ if and only if we have either $\gamma_1(s) = 0$ for all $s$, or $\gamma_2(s) = \pt$ for all $s$ (since $\gamma_1$ is constant). This is the same as saying that $\gamma$ lies in the subset $0 \times L^2X^{t,x} \cup S^1 \times \pt$ of the right hand side. This completes the proof.
\end{proof}

\begin{definition}
Let $X$ be a pointed, finite $T$-CW complex. The value of the reduced theory $\widetilde{\E}_T^*(X)$ on $X$ is defined as the kernel
\[
\widetilde{\E}_T^*(X) := \ker(\E_T^*(X) \to \E_T^*(\pt))
\]
of the map induced by the inclusion of the basepoint. Thus, $\widetilde{\E}_T^*(X)$ is an ideal sheaf of $\Z/2\Z$-graded, coherent $\O_{E_T}$-algebras. 
\end{definition}

\begin{proposition}\label{jezza}
Let $X$ be a finite $T$-CW complex. There is an $\SL_2(\Z)$-equivariant isomorphism of coherent $\O_{E_T}$-algebras
\[
\widetilde{\E}^{*}_T(X) \cong \widetilde{\E}^{*+1}_T(S^1 \wedge X)
\]
natural in $X$. 
\end{proposition}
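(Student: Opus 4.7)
My plan is to reduce the problem to the level of inverse limits before applying the suspension isomorphism cell-by-cell. By Theorem \ref{yess}, both $\H^*_{\bT^2 \times T}(L^2X)$ and $\H^{*+1}_{\bT^2 \times T}(L^2(S^1 \wedge X))$ may be computed as inverse limits over $\D(X)$ and $\D(S^1 \wedge X)$ respectively. The key geometric input will be Lemma \ref{wedger}, which says $L^2(S^1 \wedge X)^{t,x} = S^1 \wedge L^2X^{t,x}$, together with the elementary observation that for pointed subspaces $A, B$ (containing the basepoint) of a pointed space, $S^1 \wedge (A \cup B) = (S^1 \wedge A) \cup (S^1 \wedge B)$ and $S^1 \wedge (A \cap B) = (S^1 \wedge A) \cap (S^1 \wedge B)$. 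Consequently $Y \mapsto S^1 \wedge Y$ defines an order-preserving bijection $\D(X) \cong \D(S^1 \wedge X)$, and this bijection plainly intertwines the $W_{\widetilde{L^2T}}$-action, because $W_{\widetilde{L^2T}}$ acts trivially on the suspension coordinate.

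Next, I would apply the suspension isomorphism of reduced Borel-equivariant cohomology to each pair, obtaining a natural isomorphism $\widetilde{H}^{*+1}_{\bT^2 \times T}(S^1 \wedge Y; \C) \cong \widetilde{H}^*_{\bT^2 \times T}(Y; \C)$ of $H_{\bT^2 \times T}$-modules, for every $Y \in \D(X)$. Tensoring with $\O_{\X^+ \times \t_\C}$ over $H_{\bT^2 \times T}$ yields an isomorphism of the corresponding reduced sheaves, and naturality of the suspension isomorphism with respect to pointed inclusions ensures these assemble into a morphism of inverse systems indexed by $\D(X)$. Passing to the inverse limit, and using that the reduction-by-basepoint operation commutes with inverse limits (since taking kernels is a left-exact operation and the diagram $Y \mapsto \H^*_{\bT^2 \times T}(\pt)_{\X^+ \times \t_\C}$ is constant), I obtain an isomorphism of $\Z/2\Z$-graded $\O_{\X^+ \times \t_\C}$-algebras between the reduced inverse-limit sheaves $\widetilde{\H}^{*+1}_{\bT^2 \times T}(L^2(S^1 \wedge X))$ and $\widetilde{\H}^*_{\bT^2 \times T}(L^2 X)$.

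Finally, I apply the left-exact functor $((\zeta_T)_* -)^{\check{T}^2}$, which preserves kernels and therefore sends the reduced inverse-limit sheaf to $\widetilde{\E}_T^*(X)$ (and similarly for $S^1 \wedge X$), producing the desired isomorphism $\widetilde{\E}^{*+1}_T(S^1 \wedge X) \cong \widetilde{\E}^*_T(X)$. For $\SL_2(\Z)$-equivariance, I need to check that the cellwise suspension isomorphism commutes with the $W_{\widetilde{L^2T}}$-equivariant structure of Theorem \ref{actyon1}; this is straightforward because the $W_{\widetilde{L^2T}}$-action on $L^2(S^1 \wedge X)$ acts only on the $L^2X$-factor under the identification $S^1 \wedge L^2X^{t,x}$, so the suspension isomorphism is $W_{\widetilde{L^2T}}$-equivariant at each level. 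Taking $\check{T}^2$-invariants then yields $\SL_2(\Z)$-equivariance. Naturality in $X$ follows from naturality of the suspension isomorphism and naturality of the constructions of $\H^*_{\bT^2 \times T}(L^2-)$ and $\E^*_T(-)$.

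The main obstacle I anticipate is not any single step but the bookkeeping required to verify that the cellwise suspension isomorphisms genuinely assemble into a map of pro-systems over $\D(X)$, rather than just a collection of isomorphisms; in particular one must check compatibility with both unions and intersections in $\D(X)$. However, since the suspension isomorphism is natural with respect to all pointed continuous maps, this compatibility is essentially automatic once the bijection $\D(X) \cong \D(S^1 \wedge X)$ is established.
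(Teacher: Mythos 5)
Your proposal is correct and follows essentially the same route as the paper's proof: Theorem \ref{yess} to reduce to inverse limits over $\D(X)$, Lemma \ref{wedger} (extended to the lattice operations) to identify $\D(S^1 \wedge X)$ with $\{S^1 \wedge Y\}_{Y \in \D(X)}$, the cellwise suspension isomorphism of Borel-equivariant cohomology, and the observation that kernel-taking commutes with $\varprojlim$, $(\zeta_T)_*$, and $(-)^{\check{T}^2}$ since all are left-exact. Your rendering is slightly more explicit than the paper's about why $Y \mapsto S^1 \wedge Y$ is a bijection on the posets and about the $W_{\widetilde{L^2T}}$-equivariance check, but the underlying argument is the same.
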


\begin{proof}
We have the composite isomorphism
\[
\begin{array}{rcl}
\widetilde{\E}^{*}_T(X) &:=&  \ker(\E^*_T(X) \to \E^*_T(\pt)) \\
&\cong& (\zeta_{T*} (\varprojlim_{Y \in \D(X)} \ker(\H^{*}_{\bT^2 \times T}(Y) \to \H^{*}_{\bT^2 \times T}(\pt))_{\X^+ \times \t_\C}))^{\check{T}^2} \\
&\cong& (\zeta_{T*} (\varprojlim_{Y \in \D(X)} \widetilde{\H}^{*}_{\bT^2 \times T}(Y)_{\X^+ \times \t_\C}))^{\check{T}^2}\\
&\cong& (\zeta_{T*} (\varprojlim_{Y \in \D(X)} \widetilde{\H}^{*+1}_{\bT^2 \times T}(S^1 \wedge Y)_{\X^+ \times \t_\C}))^{\check{T}^2 }\\
&=& (\zeta_{T*} (\varprojlim_{Y \in \D(S^1 \wedge X)} \widetilde{\H}^{*+1}_{\bT^2 \times T}(Y)_{\X^+ \times \t_\C}))^{\check{T}^2} \\
&\cong& (\zeta_{T*} (\varprojlim_{Y \in \D(S^1 \wedge X)} \ker(\H^{*+1}_{\bT^2 \times T}(Y) \to \H^{*+1}_{\bT^2 \times T}(\pt))_{\X^+ \times \t_\C}))^{\check{T}^2} \\
&\cong&  \ker(\E^{*+1}_T(S^1\wedge X) \to \E^{*+1}_T(\pt)) \\
&=:&  \widetilde{\E}^{*+1}_T(S^1 \wedge X).
\end{array}
\]
Indeed, the second and seventh lines hold since the inverse limit is a right adjoint functor, and therefore respects all limits, including kernels. The third and sixth lines are a standard characterisation of reduced cohomology. The fourth line is induced by the natural suspension isomorphism on cohomology. The fifth holds because the equality of Lemma \ref{wedger} is an equality of subsets of $L^2(S^1 \wedge X)$ and therefore compatible with inclusion, so that it extends to an equality of partially ordered sets
\[
\D(S^1 \wedge X) = \{S^1 \wedge Y\}_{Y\in \D(X)}.
\]
It is straightforward to show that the isomorphisms are natural in $X$ and $\SL_2(\Z)$-equivariant.
\end{proof}

\begin{corollary}\label{hewson}
The functor $\widetilde{\E}^*_{T,t}$ is a reduced $T$-equivariant elliptic cohomology theory in the sense of Definition \ref{tear}, once it is equipped with the restriction of the suspension isomorphism of Proposition \ref{jezza} to $E_{T,t}$. Furthermore, let $\psi$ be a homomorphism of compact tori $\psi: T \to G$, and let $R_\psi: E_{T,t} \to E_{G,t}$ be the map induced by $\psi$.
Then there exists a natural transformation of functors $\widetilde{\E}^*_{G,t} \to R_{\psi *}\widetilde{\E}^*_{T,t}$.
\end{corollary}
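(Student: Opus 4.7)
The plan for the first part is to verify each condition of Definition \ref{tear} for $\widetilde{\E}^*_{T,t}$ in turn. Functoriality in $X$ is inherited through the composition
\[
X \longmapsto L^2 X \longmapsto \H^*_{\bT^2 \times T}(L^2 X) \longmapsto \iota_t^*(-) \longmapsto (\zeta_{T,t})_*(-) \longmapsto (-)^{\check{T}^2},
\]
each step of which is functorial; naturality of the inverse limit defining $\H^*_{\bT^2 \times T}(L^2 X)$ is guaranteed by Theorem \ref{yess} (and the proof of Corollary \ref{stalkss}). The suspension isomorphism is obtained by restricting the global isomorphism of Proposition \ref{jezza} to the fiber $E_{T,t} \subset E_T$. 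Coherence follows from Remark \ref{coherent} together with the fact that $\zeta_{T,t}$ is a covering map with free, properly discontinuous deck group $\check{T}^2$, so that local $\check{T}^2$-invariants of the pushforward are isomorphic to the sections on a single component of the preimage.

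For homotopy invariance, exactness and additivity, I would reduce to stalks. Given $a \in E_{T,t}$ and $x \in \t_\C$ with $\zeta_{T,t}(x) = a$, smallness of a sufficiently fine cover identifies the stalk $\widetilde{\E}^*_{T,t}(X)_a$ with $\widetilde{\H}^*_{\bT^2 \times T}(L^2 X)_{(t,x)}$. Combining Corollary \ref{stalkss}, the equivariant homeomorphism $ev_{t,x}: L^2 X^{t,x} \cong X^a$ of Lemma \ref{equal}, and Proposition \ref{changeh} (using the isomorphism $\nu$ of Lemma \ref{groupss} to identify $(\bT^2 \times T)/T(t,x)$ with $T/T(a)$) presents this stalk in the form
\[
\widetilde{H}^*_T(X^a) \otimes_{H_T} \O_{\t_\C,\, x}.
\]
The functor $X \mapsto X^a$ preserves $T$-homotopies, cofiber sequences of finite $T$-CW complexes, and finite wedges, while $\O_{\t_\C, x}$ is flat over $H_T$ (it is an analytic localisation of a polynomial ring). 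The three remaining axioms therefore follow from the corresponding properties of reduced Borel-equivariant cohomology.

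For the second part, a homomorphism $\psi: T \to G$ induces a compatible pair: a group homomorphism $\bT^2 \times T \to \bT^2 \times G$, and a holomorphic map $\id \times \Lie(\psi)_\C: \X^+ \times \t_\C \to \X^+ \times \Lie(G)_\C$ which descends to $R_\psi$ on the fibers over $t$. Functoriality of Borel-equivariant cohomology yields, for each finite $\bT^2 \times G$-CW subcomplex $Y \subset L^2 X$, a natural map $H^*_{\bT^2 \times G}(Y) \to H^*_{\bT^2 \times T}(Y)$ compatible with $H_{\bT^2 \times G} \to H_{\bT^2 \times T}$. Assembling these maps, tensoring with structure sheaves, and passing to the inverse limit via Theorem \ref{yess} yields a natural map
\[
(\id \times \Lie(\psi)_\C)^*\, \H^*_{\bT^2 \times G}(L^2 X) \longrightarrow \H^*_{\bT^2 \times T}(L^2 X)
\]
on $\X^+ \times \t_\C$. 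The $\check{T}^2$-action on the target is compatible with the $\check{G}^2$-action on the source via $\check\psi: \check{T}^2 \to \check{G}^2$, so pushing forward along $\zeta_{T,t}$, restricting to the fiber at $t$, and taking invariants on both sides yields the desired natural transformation $\widetilde{\E}^*_{G,t}(X) \to R_{\psi *}\widetilde{\E}^*_{T,t}(X)$.

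The main obstacle throughout is bookkeeping: verifying that pullbacks, pushforwards, inverse limits, and $\check{T}^2$-invariants interact correctly and that each step preserves the relevant exactness or naturality. For the second part, note that non-surjectivity of $\psi$ poses no difficulty since only a restriction map, rather than the isomorphism of Proposition \ref{Segall}, is needed to construct a natural transformation.
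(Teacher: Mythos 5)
Your proposal is correct and follows essentially the same route as the paper: reduce homotopy invariance, exactness, and additivity to stalks via Corollary \ref{stalkss} and Lemma \ref{equal}, inherit them from Borel-equivariant cohomology, take the suspension isomorphism from Proposition \ref{jezza}, and construct the natural transformation in $T$ by applying functoriality of Borel-equivariant cohomology and the Chern--Weil isomorphism levelwise through the inverse limit before pushing forward and taking lattice invariants. The only minor imprecision is in the last step of the second part, where the pushforward should be taken along $\zeta_{G,t}$ (or equivalently along $R_\psi \circ \zeta_{T,t}$) so that both sides land over $E_{G,t}$; this does not affect the substance of the argument.
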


\begin{proof}
It follows from Remark \ref{coherent} that, for a pointed finite $T$-complex $X$, the assignment $X \mapsto \widetilde{\E}^*_{T,t}(X)$ takes values in the category $\Coh(E_{T,t})$ of $\Z/2\Z$-graded, coherent $\O_{E_{T,t}}$-algebras. Furthermore, it is functorial and homotopy invariant, since the double loop space functor and Borel-equivariant cohomology are functorial and homotopy invariant. It is also exact and additive since these properties may be checked on stalks, and on stalks they are inherited from Borel-equivariant cohomology by Corollary \ref{stalkss} (since $L^2X^{t,x} \cong X^a$ by Lemma \ref{equal}; compare the proof of Proposition \ref{kerry}). 

It remains to show naturality in the compact torus $T$. The map $\psi:T \to G$ induces a map $U_\psi: \X^+ \times \Lie(T)_\C \to \X^+ \times \Lie(G)_\C$ of complex manifolds, which allows us to pull back holomorphic functions. Moreover, given a $G$-complex $X$, we may regard $X$ as a $T$-complex via $\psi$, and $\psi$ yields a map 
\[
L^2X \times_{\bT^2 \times T} E(\bT^2 \times T) \to L^2X \times_{\bT^2 \times G} E(\bT^2 \times G)
\]
of topological spaces. This induces a map on cohomology $H^*_{\bT^2 \times G}(L^2X) \to H^*_{\bT^2 \times T}(L^2X)$ which, for $X = \pt$, is compatible with pullback of functions, because the Chern-Weil isomorphism is natural in $T$. Therefore, we have an induced map
\[
H^*_{\bT^2 \times G}(Y) \otimes_{H_{\bT^2 \times G}} \O_{\X^+ \times \Lie(G)_\C} \to U_{\psi *} (H^*_{\bT^2 \times T}(Y) \otimes_{H_{\bT^2 \times T}} \O_{\X^+ \times \Lie(T)_\C}).
\]
Taking the inverse limit over $Y \in \D(X)$ yields a map 
\[
\H^*_{L^2G}(L^2X) \to U_{\psi *} \H^*_{\bT^2 \times T}(L^2X).
\]
Finally, pushing forward along $\zeta_{G,t}$ and taking $\check{G}^2$-invariants yields a map
\[
\E^*_G(X) \to R_{\psi *} \E^*_T(X),
\]
natural in $X$, which restricts to the desired map.
\end{proof}

\section{A calculation of $\E_T^*(T/K)$}\label{pepper}

In this section, we use Theorem \ref{yess} to compute the value of $\E^*_T$ on a single orbit $T/K$, which has a $T$-action induced by the group structure. This allows one to compute $\E^*_T(X)$ for any other finite $T$-CW complex $X$ by using the Mayer-Vietoris sequence. 

Let $i$ denote the inclusion $E_K \hookrightarrow E_T$ of complex manifolds induced by the inclusion $K \subset T$. We will show that there is a canonical isomorphism
\[
\E^*_T(T/K) \cong i_* \O_{E_K}
\]
where the left hand side is a $\Z/2\Z$-graded algebra concentrated in even degree.

We begin by calculating $\D(T/K)$. Let $a \in E_{T,t}$ and write $(a_1,a_2)$ for the preimage of $a$ under $\chi_{T,t}$. We have
\[
(T/K)^a = (T/K)^{\langle a_1,a_2 \rangle} = \begin{cases} T/K &\mbox{if } (a_1,a_2) \in K \times K \\ \emptyset & \mbox{otherwise.} \end{cases}
\]
Let $x \in \zeta_{T,t}^{-1}(a)$ and write $x = x_1t_1 + x_2t_2$. Recall that $\zeta_{T,t}$ denotes the quotient map $\t_\C \twoheadrightarrow E_{T,t}$, and $\exp_T: \t \to T$ denotes the exponential map of $T$. We have that
\begin{equation}\label{henry}
L^2X^{t,x} = \{\gamma(s_1,s_2) = \exp_T(x_1s_1 + x_2s_2) \cdot z \, | \, z\in T/K\}
\end{equation}
if $(x_1,x_2) \in \exp_{T\times T}^{-1}(K \times K)$, and $L^2X^{t,x}$ is empty otherwise. To calculate $\D(T/K)$, we need to calculate the intersections of these subspaces. Suppose 
\[
\gamma(s_1,s_2) \in L^2X^{t,x}  \cap L^2X^{t',x'} 
\]
with $x = x_1t_1 + x_2t_2$ and $x' = x_1't_1'+ x_2't_2'$. Then
\[
\gamma(s_1,s_2) = \exp_T(x_1s_1+x_2s_2)\cdot z = \exp_T(x_1's_1+x_2's_2)\cdot z'
\]
for $z,z' \in T/K$. A straightforward calculation shows that this holds if and only if
\[
z - z' = \exp_T((x_1-x_1')s_1 + (x_2-x_2')s_2) \text{ mod } K
\]
for all $s_1,s_2$, which holds if and only if
\[
z = z' \quad \text{and} \quad (x_1-x_1',x_2-x_2') \in \mathrm{Lie}(K) \times \mathrm{Lie}(K).
\]
Thus, for all $(t,x),(t',x') \in \X^+ \times \t_\C$, the intersection
\[
L^2X^{t,x} \cap L^2X^{t',x'} = L^2X^{t,x} = L^2X^{t',x'}, 
\]
if and only if $(x_1,x_2),(x_1',x_2')$ lie in the same component of $\exp_{T\times T}^{-1}(K \times K)$, and is empty otherwise. In particular, the spaces $L^2X^{t,x}$ are indexed by the components of $\exp_{T\times T}^{-1}(K \times K)$, where the component containing $(x_1,x_2)$ has the form
\begin{equation}\label{goop}
\{(x_1,x_2)\} + \mathrm{Lie}(K) \times \mathrm{Lie}(K) \subset \t \times \t.
\end{equation}
We will call $(x_1,x_2)$ a \textit{representative} of this component. Moreover, we have seen that there are no nonempty intersections between two spaces of the form $L^2X^{t,x}$, unless they are equal. Therefore $\D(X)$ is the set of finite disjoint unions of such spaces. \par

Before moving on to the next step, which is to calculate the cohomology ring $H_{\bT^2 \times T}(L^2X^{t,x})$, we need to know more about the space $L^2X^{t,x}$. Since the $\bT^2 \times T$-action on $L^2X^{t,x}$ is clearly transitive, we can apply the change of groups property of Proposition \ref{change1} if we know the $\bT^2 \times T$-isotropy. An element $(r,u) \in \bT^2 \times T$ fixes a nonempty subset $L^2X^{t,x}$ if and only if
\[
(r,u) \cdot (\exp_T(x_1s_1 + x_2s_2)\cdot z) = (\exp_T(x_1(s_1-r_1) + x_2(s_2-r_2)) + u ) \cdot z
= \exp_T(x_1s_1 + x_2s_2)\cdot z,
\]
which holds if and only if $\exp_T(-x_1r_1 - x_2r_2) + u$ fixes $z$. Therefore, we must have
\[
u - \exp_T(x_1r_1 + x_2r_2) \in K,
\]
which means that the isotropy group of the $\bT^2 \times T$-orbit $L^2X^{t,x}$ is equal to 
\begin{equation}\label{ernie}
\{(r,u) \in \bT^2 \times T \, | \, u - \exp_T(x_1r_1 + x_2r_2) \in K \} = \langle T(t,x),K \rangle.
\end{equation}
Furthermore, since two spaces of the form $L^2X^{t,x}$ are equal if they correspond to the same component of $\exp_{T\times T}^{-1}(K \times K)$, we must have that 
\[
\langle T(t,x),K \rangle = \langle T(t',x'),K \rangle
\]
whenever $(x_1,x_2),(x_1',x_2')$ lie in the same component $\exp_{T\times T}^{-1}(K \times K)$. Using Proposition \ref{change1}, we calculate 
\[
H_{\bT^2 \times T}(L^2X^{t,x}) \cong H_{\bT^2 \times T}((\bT^2 \times T)/\langle T(t,x),K \rangle) \cong H_{\langle T(t,x),K \rangle}.
\]
The value of $\H^*_{\bT^2 \times T}(L^2X)$ on an open subset $U \subset \X^+ \times \t_\C$ is therefore
\begin{gather*}
\varprojlim_{Y \in \D(X)} H_{\bT^2 \times T}(Y) \otimes_{H_{\bT^2 \times T}} \O_{\X^+ \times \t_\C}(U) \\ = \prod_{(x_1,x_2) \in J(K)} H_{\langle T(x_1,x_2),K \rangle}  \otimes_{H_{\bT^2 \times T}} \O_{\X^+ \times \t_\C}(U)
\end{gather*}
where the product is indexed over a set $J(K) = \{(x_1,x_2)\}$ of representatives of the components of $\exp_{T\times T}^{-1}(K \times K)$, and $T(x_1,x_2) := T(t,x)$ for any $(t,x)$ such that $x = x_1t_1+x_2t_2$. \par

From the description in \eqref{ernie}, we see that
\begin{equation}\label{berrol}
\mathrm{Lie}(\langle T(x_1,x_2),K \rangle)_\C = \{ (t,y) \in \X^+ \times \t_\C \, | \, y \in (x_1 + \mathrm{Lie}(K))t_1 + (x_2 + \mathrm{Lie}(K))t_2 \}.
\end{equation}
Let $I(x_1,x_2,K) \subset \C[t_1,t_2,y]$ be the ideal associated to $\mathrm{Lie}(\langle T(x_1,x_2),K \rangle)_\C \subset \C^2 \times \t_\C$, so that
\[
H_{\langle T(x_1,x_2),K \rangle} = \C[t,y]/I(x_1,x_2,K).
\]
By Proposition 2.8 in \cite{Rosu03}, tensoring over $H_{\bT^2 \times T}$ with the ring of holomorphic functions is an exact functor. Therefore, if $\I(x_1,x_2,K) \subset \O_{\X^+ \times \t_\C}$ denotes the analytic ideal associated to $I(x_1,x_2,K)$, we can write
\begin{equation}\label{beer}
\H^*_{\bT^2 \times T}(L^2(T/K)) = \prod_{(x_1,x_2)\in J(K)}  \O_{\X^+ \times \t_\C}/\I(x_1,x_2,K).
\end{equation}
Note that the left hand side, whose grading is induced by the even and odd grading on cohomology, is concentrated in even degree.

\begin{example}
If $T$ has rank one and $K = \Z/n\Z$, then $\mathrm{Lie}(K)$ is trivial and we have
\[
\H^*_{\bT^2 \times T}(L^2(T/K))(U) = \prod_{(x_1,x_2)} \C[t_1,t_2,y]/(y - x_1t_1 - x_2t_2) \otimes_{\C[t,y]} \O_{\X^+ \times \t_\C}(U)
\]
where $(x_1,x_2)$ ranges over $J(\Z/n\Z) = \{(a_1/n, a_2/n) \, | \, a_1,a_2 \in \Z\}$. This is a holomorphic version of the calculation made by Rezk in Example 5.2 of \cite{Rezk}. Note that the left hand side is concentrated in even degree.
\end{example}

It remains to compute the $\check{T}^2$-invariants of \eqref{beer}, which is the sheaf of holomorphic functions on the complex submanifold
\begin{equation}\label{derk}
\coprod_{(x_1,x_2)\in J(K)} \mathrm{Lie}(\langle T(x_1,x_2),K \rangle)_\C \subset \X^+ \times \t_\C.
\end{equation}
The $\check{T}^2$-invariant functions on this space are canonically identified with functions on the $\check{T}^2$-quotient, and we will show that this quotient is exactly $E_K$. Recall from \eqref{goop} that 
\[
\exp_{T\times T}^{-1}(K \times K) = \coprod_{(x_1,x_2)\in J(K)} \{(x_1,x_2)\} + \mathrm{Lie}(K) \times \mathrm{Lie}(K).
\]
It follows from \eqref{berrol} that the right hand side is the preimage of 
\[
\coprod_{(x_1,x_2)\in J(K)} \mathrm{Lie}(\langle T(x_1,x_2),K \rangle)_\C
\]
under the isomorphism $\xi_T: \X^+ \times \t \times \t \rightarrow \X^+ \times \t_\C$, which sends $(t,x_1,x_2)$ to $(t,x_1t_1+x_2t_2)$, as in Remark \ref{grut}. Recall that $\xi_T$ is $\check{T}^2$-equivariant, and that it induces the isomorphism $\chi_T: \X^+ \times T \times T \to E_T$ of orbit spaces, as expressed by diagram \eqref{deer}. We may summarise our present situation by the commutative diagram
\[
\begin{tikzcd}
\X^+ \times \exp_{T\times T}^{-1}(K \times K) \ar[d, twoheadrightarrow] \ar[r,"{\cong}"] & \coprod_{(x_1,x_2)\in J(K)} \mathrm{Lie}(\langle T(x_1,x_2),K \rangle)_\C \ar[d, twoheadrightarrow] \\
\X^+ \times K \times K \ar[r,"{\cong}","{\chi_K}"']& E_K.
\end{tikzcd}
\]
Here, the upper horizontal map, left vertical map and right vertical map are suitable restrictions of the respective maps $\xi_T$, $\id \times \exp_{T\times T}$ and $\zeta_T$ of diagram \eqref{deer}. Thus one sees that $E_K$ is the complex analytic quotient of \eqref{derk} by the $\check{T}^2$-action. In other words, we have a canonical isomorphism
\[
\E^*_T(T/K) \cong i_* \O_{E_K},
\]
where the left hand side is concentrated in even degree.

\section{A local description}\label{calf}

In this section, for $t \in \X^+$ and a cover $\U$ of $E_{T,t}$ which is adapted to $X$, we give a local description of $\E^*_{T,t}(X)$ with respect to $\U$. This description will turn out to be identical to Grojnowski's construction of $\G^*_{T,t}(X)$ over the elliptic curve $E_t$, as outlined in Section \ref{Grojn}. 

Before we can give the description, we need two technical lemmas. Recall that $\iota_s: T \hookrightarrow \bT^2 \times T$ denotes the inclusion of the fiber over $s \in \bT^2$, and recall the maps $p_{t,x}$, $p_a$ and $\nu$ which fit into a commutative diagram 
\begin{equation}\label{rect1}
\begin{tikzcd}
T \ar[r, two heads,"p_a"] \ar[d, "\iota_{0,0}",hook] & T/T(a) \ar[d, "\nu", "\cong"'] \\
 \bT^2 \times T \ar[r, two heads,"{p_{t,x}}"] & (\bT^2 \times T)/T(t,x) \\
\end{tikzcd}
\end{equation}
as in Remark \ref{fern}.

In this section we write $K(t,x)$ for the quotient group $(\bT^2 \times T)/T(t,x)$, we write $\mathrm{tr}_{t,x}$ for translation by $(t,x)$ in $\C^2 \times \t_\C$, and $\mathrm{tr}_x$ for translation by $x$ in $\t_\C$. For any map $f$ of compact Lie groups, we abuse notation and also write $f$ for the induced map of complex Lie algebras (as we have been doing with the symbol $\iota$).

\begin{lemma}\label{dg}
The diagram
\begin{equation}
\begin{tikzcd}
\t_\C \ar[r,"p_{a} \circ \mathrm{tr}_{-x}",two heads] \ar[d,hook,"\iota_{t}"] & \mathrm{Lie}(T/T(a))_\C \ar[d,"\nu","\cong"'] \\
\C^2 \times \t_\C \ar[r,"p_{t,x}", two heads] & \mathrm{Lie}(K(t,x))_\C
\end{tikzcd}
\end{equation}
of maps of complex Lie algebras commutes. 
\end{lemma}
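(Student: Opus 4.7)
The plan is to verify commutativity by evaluating both compositions at a generic element $y \in \t_\C$, landing in the common target $\Lie(K(t,x))_\C = (\C^2 \times \t_\C)/\Lie(T(t,x))_\C$, and then identifying the two resulting cosets using the explicit description of $\Lie(T(t,x))_\C$ given in Remark \ref{push}.

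Concretely, I would first unpack the maps. The fiber inclusion gives $\iota_t(y) = (t,y) \in \C^2 \times \t_\C$, and translation gives $\mathrm{tr}_{-x}(y) = y - x$. The isomorphism $\nu$ is, by construction in Remark \ref{fern}, induced by $\iota_{0,0}: T \hookrightarrow \bT^2 \times T$; hence on complexified Lie algebras $\nu$ is induced by the inclusion $\t_\C \hookrightarrow \C^2 \times \t_\C$ sending $z \mapsto (0,0,z)$. Therefore the two paths around the square evaluate to
\[
\nu(p_a \circ \mathrm{tr}_{-x}(y)) = \bigl[(0,0,y-x)\bigr] \quad \text{and} \quad p_{t,x}(\iota_t(y)) = \bigl[(t,y)\bigr]
\]
in $\Lie(K(t,x))_\C$.

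It therefore suffices to check that
\[
(t,y) - (0,0,y-x) = (t,x) \in \Lie(T(t,x))_\C.
\]
This is exactly the content of the defining property of $T(t,x)$, as recorded in Remark \ref{push}: $T(t,x)$ is the smallest closed subgroup of $\bT^2 \times T$ whose complexified Lie algebra contains $(t,x)$, and so $(t,x)$ lies in $\Lie(T(t,x))_\C$ tautologically.

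There is no real obstacle here; the argument is entirely bookkeeping. The one point that requires care is simply keeping the two meanings of $\iota$ straight (the fiber inclusion $\iota_t$ of a complex manifold versus the group-theoretic inclusion $\iota_{0,0}$ that defines $\nu$), and noting that although $\iota_t$ is not itself induced by a group homomorphism for $t \neq 0$, this is irrelevant because the verification of commutativity reduces to the single set-theoretic identity displayed above.
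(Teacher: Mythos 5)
Your proof is correct and takes essentially the same route as the paper's: both arguments turn on the two observations that $\nu$ is induced on Lie algebras by $\iota_{0,0}$ (so that $\nu \circ p_a = p_{t,x}\circ\iota_{0,0}$), and that $(t,x)$ lies in $\mathrm{Lie}(T(t,x))_\C = \ker p_{t,x}$. You phrase the second point as a pointwise coset identity, whereas the paper records the same fact as the composition identity $p_{t,x} = p_{t,x}\circ\mathrm{tr}_{t,x}$ and then manipulates compositions of maps; the difference is purely presentational.
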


\begin{proof}
The commutative diagram \eqref{rect1} induces a commutative diagram of complex Lie algebras, so that
\[
\nu \circ p_a = p_{t,x} \circ \iota_{0,0}
\]
on complex Lie algebras. Consider the commutative diagram
\[
\begin{tikzcd}
T(t,x) \ar[r] \ar[d] & \bT^2 \times T \ar[d] \\
0 \ar[r] & (\bT^2 \times T)/T(t,x) 
\end{tikzcd}
\]
of compact abelian groups. By applying the Lie algebra functor and then tensoring with $\C$, we see that $(t,x)$ lies in the kernel of 
\[
p_{t,x}: \C^2 \times \t_\C \twoheadrightarrow \mathrm{Lie}((\bT^2 \times T)/T(t,x))_\C.
\]
Therefore,
\[
p_{t,x} = p_{t,x} \circ \mathrm{tr}_{t,x}
\]
and so
\[
\begin{array}{rcl}
\nu \circ p_a &=& p_{t,x} \circ \mathrm{tr}_{t,x} \circ \iota_{0,0} \\
&=& p_{t,x} \circ \iota_t \circ \mathrm{tr}_{x}. \\
\end{array}
\]
Composing on the right by $\mathrm{tr}_{-x}$ now yields the result.
\end{proof}

\begin{lemma}\label{equal1}
There is an isomorphism of sheaves of $\Z/2\Z$-graded $\O_{\t_\C}$-algebras
\[
\mathrm{tr}_{-x}^* \, p_{a}^*\, \H^*_{T/T(a)}(X^{a}) \cong \iota_{t}^* \, p_{t,x}^*\, \H^*_{K(t,x)}(L^2X^{t,x}) 
\]
natural in $X$.
\end{lemma}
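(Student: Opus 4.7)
\emph{Proof plan.} The strategy is to rewrite the right-hand side using the pullback-composition formula supplied by Lemma \ref{dg}, and then invoke the homeomorphism of Lemma \ref{equal} to replace loop space cohomology with fixed-point cohomology.

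First I would apply Lemma \ref{dg}, which says $p_{t,x}\circ \iota_t = \nu\circ p_a \circ \mathrm{tr}_{-x}$ as maps of complex Lie algebras. Pulling back sheaves along this equality gives
\[
\iota_t^*\, p_{t,x}^*\, \H^*_{K(t,x)}(L^2X^{t,x}) \;\cong\; \mathrm{tr}_{-x}^*\, p_a^*\, \nu^*\, \H^*_{K(t,x)}(L^2X^{t,x}),
\]
so it suffices to produce a natural isomorphism $\nu^*\,\H^*_{K(t,x)}(L^2X^{t,x}) \cong \H^*_{T/T(a)}(X^a)$ of $\Z/2\Z$-graded sheaves of $\O_{\Lie(T/T(a))_\C}$-algebras.

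For this I would use Lemma \ref{equal}, which gives a $\nu$-equivariant homeomorphism $ev_{t,x}\colon L^2X^{t,x}\xrightarrow{\sim} X^{a}$ between a $K(t,x)$-space and a $T/T(a)$-space. Since $\nu$ is a group isomorphism, the induced map on Borel constructions is a homeomorphism, yielding a ring isomorphism $H^*_{K(t,x)}(L^2X^{t,x}) \cong H^*_{T/T(a)}(X^a)$ compatible with the coefficient identification $\nu^*\colon H_{K(t,x)}\xrightarrow{\sim} H_{T/T(a)}$. Tensoring with $\O_{\Lie(T/T(a))_\C}(U)$ on an analytic open $U$ identifies the value of $\nu^*\H^*_{K(t,x)}(L^2X^{t,x})$ on $U$, namely
\[
H^*_{K(t,x)}(L^2X^{t,x})\otimes_{H_{K(t,x)}}\O_{\Lie(T/T(a))_\C}(U),
\]
with $H^*_{T/T(a)}(X^a)\otimes_{H_{T/T(a)}}\O_{\Lie(T/T(a))_\C}(U)$, which by Definition \ref{bochum} is exactly $\H^*_{T/T(a)}(X^a)(U)$. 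Combining the two displayed isomorphisms yields the desired identification.

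Naturality in $X$ is immediate since every ingredient is natural: the evaluation homeomorphism $ev_{t,x}$ is natural in $X$ by Lemma \ref{equal}, Borel-equivariant cohomology is functorial, and the Lie-algebra identity in Lemma \ref{dg} is independent of $X$. There is essentially no obstacle beyond bookkeeping; the only point requiring care is to check that the $\Z/2\Z$-grading and the $\O$-algebra structure are preserved at each step, which follows because the coefficient identification $H_{K(t,x)} \cong H_{T/T(a)}$ from $\nu^*$ is a graded ring isomorphism and $\mathrm{tr}_{-x}^*$, $p_a^*$, $\iota_t^*$, $p_{t,x}^*$ all respect the sheaf of algebras structure by construction.
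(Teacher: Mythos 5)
Your argument is correct and matches the paper's proof in all essentials: both use the $\nu$-equivariant homeomorphism from Lemma~\ref{equal} to transport Borel cohomology across the group isomorphism $\nu$, and both invoke the commuting Lie-algebra square of Lemma~\ref{dg} to rewrite the pullbacks; the only difference is the (immaterial) order in which the two lemmas are applied.
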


\begin{proof}
Recall the isomorphism $\nu: K(t,x) \cong T/T(a)$ of diagram \eqref{rect1}. By Lemma \ref{equal}, the evaluation map
\[
ev_{t,x}: L^2X^{t,x} \cong X^a
\]
is natural in $X$ and equivariant with respect to $\nu$. The induced homeomorphism
\[
E(K(t,x)) \times_{K(t,x)} L^2X^{t,x} \cong E(T/T(a)) \times_{T/T(a)} X^a
\]
induces, in turn, an isomorphism of $\Z/2\Z$-graded $H_{T/T(a)}$-algebras
\[
H^*_{T/T(a)}(X^{a}) \cong H^*_{K(t,x)}(L^2X^{t,x}),
\]
natural in $X$. Here the ring $H_{T/T(a)}$ acts on the target via the isomorphism 
\[
H_{K(t,x)} \cong H_{T/T(a)}
\]
induced by $\nu$. Thus, we have an isomorphism of $\O_{\mathrm{Lie}(T/T(a))_\C}$-algebras   
\[
\H^*_{T/T(a)}(X^{a}) \cong \nu^* \, \H^*_{K(t,x)}(L^2X^{t,x}),
\]
and hence an isomorphism of $\O_{\t_\C}$-algebras,
\[
\mathrm{tr}_{-x}^*\, p_a^*\, \H^*_{T/T(a)}(X^{a}) \cong \mathrm{tr}^*_{-x}\, p_{a}^*\,\nu^* \, \H^*_{K(t,x)}(L^2X^{t,x})
\]
natural in $X$. The result now follows from Lemma \ref{dg}.
\end{proof}

\begin{notation}
In what follows, we shall use the abbreviation
\[
H^*_T(X)_U := H^*_T(X) \otimes_{H_T} \O_{\t_\C}(U)
\]
in order to reduce notational clutter.
\end{notation}

\begin{theorem}\label{iso}
Let $X$ be a finite $T$-CW complex and let $\U$ be a cover adapted to $X$. Let $a \in E_{T,t}$, and let $U_a$ be the corresponding open neighbourhood of $a$ in $E_{T,t}$. There is an isomorphism of sheaves of $\Z/2\Z$-graded $\O_{U_a}$-algebras
\[
\E^*_{T,t}(X)_{U_a} \cong \G^*_{T,t}(X)_{U_a},
\]
natural in $X$.
\end{theorem}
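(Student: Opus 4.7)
The plan is to compute $\E^*_{T,t}(X)|_{U_a}$ by descending, via $\zeta_{T,t}$, a chain of local isomorphisms of the sheaf $\H^*_{\bT^2 \times T}(L^2X)$ already established on the cover $\t_\C$, and then to observe that the resulting expression agrees with the defining value of $\G^*_{T,t}(X)_{U_a}$. Fix $x \in \zeta_{T,t}^{-1}(a)$, let $V_x \subset \t_\C$ be the component of $\zeta_{T,t}^{-1}(U_a)$ containing $x$, and let $V_{t,x} \subset \X^+ \times \t_\C$ be the open subset of Definition \ref{defo}, so that $\iota_t^{-1}(V_{t,x}) = V_x$. Since $U_a$ is small and $\zeta_{T,t}|_{V_x}: V_x \xrightarrow{\sim} U_a$ is a biholomorphism sending $x$ to $a$, it suffices to produce an isomorphism of the form
\[
\iota_t^*\, \H^*_{\bT^2 \times T}(L^2X)|_{V_x} \cong \mathrm{tr}_{-x}^*\, \H^*_T(X^a)|_{V_x},
\]
compatible with the $\check{T}^2$-equivariant structure and natural in $X$.

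To construct this isomorphism, I would string together the following steps on $V_x$. First, Corollary \ref{stalkss} reduces the loop space to its fixed subspace:
\[
\iota_t^*\, \H^*_{\bT^2 \times T}(L^2X)|_{V_x} \cong \iota_t^*\, \H^*_{\bT^2 \times T}(L^2X^{t,x})|_{V_x}.
\]
Since $T(t,x)$ acts trivially on $L^2X^{t,x}$, Lemma \ref{equal} equips $L^2X^{t,x}$ with a $K(t,x)$-CW structure, and Proposition \ref{Segall} applied to $p_{t,x}: \bT^2 \times T \twoheadrightarrow K(t,x)$ identifies $\H^*_{\bT^2 \times T}(L^2X^{t,x}) \cong p_{t,x}^*\, \H^*_{K(t,x)}(L^2X^{t,x})$. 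Invoking Lemma \ref{equal1} transports this to $\mathrm{tr}_{-x}^*\, p_a^*\, \H^*_{T/T(a)}(X^a)$, and Proposition \ref{Segall} applied to $p_a: T \twoheadrightarrow T/T(a)$ then yields $p_a^*\, \H^*_{T/T(a)}(X^a) \cong \H^*_T(X^a)$, giving the desired isomorphism.

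Next I would take the pushforward along $\zeta_{T,t}$ and form $\check{T}^2$-invariants. Because $U_a$ is small, $\zeta_{T,t}^{-1}(U_a)$ is the disjoint union of the translates $V_x + mt$ for $m \in \check{T}^2$, permuted freely and transitively by the $\check{T}^2$-action of Remark \ref{ulmo}. The $\check{T}^2$-invariant sections over $U \subset U_a$ therefore restrict isomorphically to sections of $\mathrm{tr}_{-x}^*\, \H^*_T(X^a)$ over the single component $V := \zeta_{T,t}^{-1}(U) \cap V_x$, namely
\[
H^*_T(X^a) \otimes_{H_T} \O_{\t_\C}(V - x).
\]
The biholomorphism $\zeta_{T,t}|_{V_x}$ carries the translation $V \mapsto V-x$ on the cover to the translation $U \mapsto U-a$ on $E_{T,t}$, producing a canonical identification $\O_{\t_\C}(V-x) \cong \O_{E_{T,t}}(U-a)$ under which the ring map $H_T \hookrightarrow \O_{\t_\C}(V-x)$ agrees with the composite \eqref{ring} that defines $\G^*_{T,t}(X)_{U_a}$. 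Thus the values of both sheaves on $U$ agree.

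To finish, I would verify that the $\O_{U_a}$-algebra structures match: on $\G^*_{T,t}(X)_{U_a}$ the action of $f \in \O(U)$ is by multiplication by $\mathrm{tr}_a^* f$, while on $\E^*_{T,t}(X)_{U_a}$ it descends from multiplication by $f \circ \zeta_{T,t}$ on the cover, and these coincide under the translation iso by a direct check. Naturality in $X$ is then inherited step by step from the naturality of Corollary \ref{stalkss}, Proposition \ref{Segall}, and Lemma \ref{equal1}. The hard part, as is typical for these constructions, is the bookkeeping: correctly tracking how the $\O$-algebra structures propagate through the translation pullback $\mathrm{tr}_{-x}^*$ and the $\check{T}^2$-descent, and in particular confirming that translation by $x$ on the cover corresponds to translation by $a$ on $E_{T,t}$ under $\zeta_{T,t}$ so that the tensor products on both sides align.
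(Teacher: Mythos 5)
Your chain of isomorphisms on the chosen component $V_x$ -- Corollary~\ref{stalkss}, then Proposition~\ref{Segall} for $p_{t,x}$, then Lemma~\ref{equal1}, then Proposition~\ref{Segall} for $p_a$, followed by the canonical identification $V-x \cong U-a$ -- is exactly the chain the paper uses, so the skeleton is right, and the observation that this has to be ``compatible with the $\check{T}^2$-equivariant structure'' correctly identifies where the difficulty sits.

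But identifying the difficulty is not the same as resolving it, and that is where the gap lies. You assert that, because $\check{T}^2$ permutes the components $V_x + mt$ freely and transitively, the $\check{T}^2$-invariant sections over $U$ ``therefore'' restrict isomorphically to a single factor. That inference is not automatic. What it requires is that, under the composite identification of $(\zeta_{T,t})_*\iota_t^*\H^*_{\bT^2\times T}(L^2X)|_{U_a}$ with $\prod_{x}H_T(X^a)_{V_x - x}$, the $\check{T}^2$-equivariant structure (which a priori acts simultaneously on the index set, the base, and the cohomology rings of the spaces $L^2X^{t,x}$) coincides with the naive permutation action whose invariants are the diagonal copy. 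Equivalently, one must show that $m \in \check{T}^2$ induces the \emph{identity} map $H_{T/T(a)}(X^a)_{V_{x+mt}-x-mt} \to H_{T/T(a)}(X^a)_{V_x - x}$ after the canonical identification of open sets. The paper states this explicitly and then proves it by checking the commutativity of two diagrams: one (\eqref{pal}) relating the Eilenberg--Moore change-of-groups isomorphisms at $(t,x)$ and $(t,x+mt)$ under the $m$-action, and another (\eqref{pall}, reduced to \eqref{pal1}) relying on the facts that $\nu^{-1}\circ m = \nu^{-1}$ (because $\nu$ is induced by the inclusion of fixed points of the Weyl action) and that $ev_{t,x+mt}\circ m = ev_{t,x}$ (because the $m$-action fixes $\gamma(0,0)$). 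Without these verifications, one has a product sheaf and a group action permuting its index set, but no control over whether the action twists each factor nontrivially along the way -- which is precisely the thing that could in principle go wrong and make the invariants smaller or differently presented than a single factor. So the structure of your argument is correct, but the load-bearing equivariance check has been stated rather than proved.
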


\begin{proof}
Recall the quotient map 
\[
\zeta_{T,t}: \t_\C \twoheadrightarrow \check{T}^2 \backslash \t_\C =: E_{T,t},
\]
where the action of $m \in \check{T}^2$ on $\t_\C$ is given by $x \mapsto x + mt$. The sheaf $\E^*_{T,t}(X)$ is defined as the $\check{T}^2$-invariants of the pushforward of the equivariant sheaf $\iota_t^* \, \H^*_{\bT^2 \times T}(L^2X)$ along this map. For $x \in \zeta_{T,t}^{-1}(a)$, write $V_x$ for the component of $\zeta_{T,t}^{-1}(U_a)$ containing $x$. We have a sequence of isomorphisms
\begin{equation}\label{compos}
\begin{array}{rcl}
((\zeta_{T,t})_* \,  \iota_{t}^* \,\H^*_{\bT^2 \times T}(L^2X))_{U_a} &\cong& \prod_{x \, \in \, \zeta_{T,t}^{-1}(a)} \:  (\iota_{t}^* \, \H^*_{\bT^2\times T}(L^2X^{t,x}))_{V_{x}} \\
&\cong& \prod_{x \, \in \, \zeta_{T,t}^{-1}(a)} \: (\iota_{t}^* \, p_{t,x}^*\, \H^*_{K(t,x)}(L^2X^{t,x}))_{V_x} \\
&\cong& \prod_{x \, \in \, \zeta_{T,t}^{-1}(a)} \: (\mathrm{tr}_{-x}^*\, p_a^*\, \H^*_{T/T(a)}(X^{a}))_{V_x} \\
&\cong& \prod_{x \, \in \, \zeta_{T,t}^{-1}(a)} \: (\mathrm{tr}_{-x}^*\, \H^*_{T}(X^{a}))_{V_x} \\
\end{array}
\end{equation}
The first map is the isomorphism of Corollary \ref{extend}. The second map and fourth maps are the isomorphism of Proposition \ref{changeh}. The third map is the isomorphism of Lemma \ref{equal1}. The first to the fourth maps, in each factor, have been shown to preserve the $\O_{V_{x}}$-algebra structure. The entire composite is therefore an isomorphism of $\O_{U_a}$-algebras, where $f\in \O_{U_a}(U)$ acts on the right hand side via multiplication by $\zeta_{T,t}^*f$. Each map has been shown to be natural in $X$, replacing, where necessary, the covering $\U$ with a refinement $\U(f)$ adapted to a $T$-equivariant map $f:X \to Y$. 
Each map has been shown to preserve the 
$\Z/2\Z$-grading by odd and even elements.

It remains to find the image of the $\check{T}^2$-invariants under the composite map above. Let $U \subset U_a$ be an open subset and let $V \subset V_x$ be an open set such that $V \cong U$ via $\zeta_{T,t}$. Over $U$, the composite is equal to  
\[
\begin{array}{rcl}
\prod \: H_{L^2T}(L^2X)_{\{t\} \times V} &\cong& \prod \: H_{\bT^2\times T}(L^2X^{t,x})_{\{t\} \times V}  \\
&\cong& \prod \: H_{K(t,x)}(L^2X^{t,x})_{\{t\} \times V}  \\
&\cong& \prod \: H_{T/T(a)}(X^{a})_{V - x} \\
&\cong& \prod \: H_{T}(X^{a})_{V-x} \\
\end{array}
\]
where each product runs over all $x \in \zeta_{T,t}^{-1}(a)$. Note that 
\[
V_x - x = V_{x+mt} - x-mt
\]
for all $m \in \check{T}^2$. We will show  
\[
( \prod_{x \, \in \, \zeta_{T,t}^{-1}(a)} H_T(X^a)_{V - x} )^{\check{T}^2} =  H_T(X^a)_{V - x}
\]
by showing that $\check{T}^2$ merely permutes the indexing set of the product. This yields our result via the isomorphism 
\[
\G^*_{T,t}(X)_{U_a}(U) = H^*_T(X^a) \otimes_{H_T} \O_{E_{T,t}}(U - a) \cong H^*_T(X^a) \otimes_{H_T} \O_{\t_\C}(V - x) =: H_{T}(X^{a})_{V - x}
\]
induced by the canonical isomorphism $V - x \cong U - a$, as in Remark \ref{canon}.\par

In other words, we must show that the action of $m \in \check{T}^2$ induces the identity map 
\[
H_{T/T(a)}(X^{a})_{V_{x+mt} - x-mt}  = H_{T/T(a)}(X^{a})_{V_x - x}.
\]
To do this, it suffices to check the commutativity of two diagrams. The first diagram is
\begin{equation}\label{pal}
\begin{tikzcd}
H_{\bT^2 \times T}(L^2X^{t,x+mt})_{\{t\} \times V_{x+mt}} \ar[r] \ar[d,"{m^*}"] & H_{K(t,x+mt)}(L^2X^{t,x+mt})_{\{t\} \times V_{x+mt}} \ar[d,"{m^*}"] \\
H_{\bT^2 \times T}(L^2X^{t,x})_{\{t\} \times V_x}  \ar[r] & H_{K(t,x)}(L^2X^{t,x})_{\{t\} \times V_x}  \\
\end{tikzcd}
\end{equation}
where the vertical arrows are induced by the action of $m \in \check{T}^2$ on the spaces
\[
E(\bT^2 \times T) \times_{\bT^2 \times T} L^2X^{t,x} \longrightarrow E(\bT^2 \times T) \times_{\bT^2 \times T} L^2X^{t,x+mt}
\]
and
\[
E(K(t,x)) \times_{K(t,x)} L^2X^{t,x}  \longrightarrow  E(K(t,x+mt)) \times_{K(t,x+mt)} L^2X^{t,x+mt}.
\]
The horizontal maps are defined, as in the proof of Proposition 2.3.4 of \cite{Chen}, using the Eilenberg-Moore spectral sequences associated to the pullback diagrams
\begin{equation}\label{gary}
\begin{tikzcd}
E(\bT^2 \times T) \times_{\bT^2\times T} L^2X^{t,x} \ar[d] \ar[r] & B(\bT^2 \times T) \ar[d] \\
E(K(t,x)) \times_{K(t,x)} L^2X^{t,x} \ar[r] &  B(K(t,x)) \\
\end{tikzcd}
\end{equation}
and
\begin{equation}\label{steve}
\begin{tikzcd}
E(\bT^2 \times T) \times_{\bT^2\times T} L^2X^{t,x+mt} \ar[r] \ar[d] & B(\bT^2 \times T) \ar[d] \\ 
E(K(t,x+mt)) \times_{K(t,x+mt)} L^2X^{t,x+mt} \ar[r] & B(K(t,x+mt)). \\
\end{tikzcd}
\end{equation}
It is easily verified that the action of $m$ induces an isomorphism from diagram \eqref{gary} to diagram \eqref{steve}, from which it follows that \eqref{pal} commutes. \par

The second diagram to check is
\begin{equation}\label{pall}
\begin{tikzcd}
H_{K(t,x+mt)}(L^2X^{t,x+mt})_{\{t\} \times V_{x+mt}} \ar[d,"{m^*}"] \ar[r] &H_{T/T(a)}(X^{a})_{V_{x+mt} - x-mt} \ar[d,equal] \\
H_{K(t,x)}(L^2X^{t,x})_{\{t\} \times V_x} \ar[r] & H_{T/T(a)}(X^{a})_{V_x - x} \\
\end{tikzcd}
\end{equation}
with vertical maps induced by the action of $m$, and horizontal maps as in the proof of Lemma \ref{equal1}. By the proof of Lemma \ref{equal1}, diagram \eqref{pall} commutes if 
\begin{equation}\label{pal1}
\begin{tikzcd}[row sep=large, column sep=3cm]
E(K(t,x)) \times_{K(t,x)} L^2X^{t,x} \ar[d,"{m}"] \ar[r,"{E\nu^{-1} \times ev_{t,x}}"] \ar[d] & E(T/T(a)) \times_{T/T(a)} X^a \ar[d,equal] \\
E(K(t,x+mt)) \times_{K(t,x+mt)} L^2X^{t,x+mt} \ar[r,"{E\nu^{-1} \times ev_{t,x+mt}}"] & E(T/T(a)) \times_{T/T(a)} X^{a}\\
\end{tikzcd}
\end{equation}
commutes, where $ev_{t,x}: L^2X^{t,x}\cong X^a$ is equivariant with respect to $\nu^{-1}: K(t,x) \cong T/T(a)$. To see that this commutes, note firstly that $\nu$ is induced by the inclusion $T \hookrightarrow \bT^2 \times T$ of the fixed points of the Weyl action $(r,t) \mapsto (r,t+mr)$, which implies that $\nu^{-1} \circ m = \nu^{-1}$. Secondly, note that the action of $m$ on a loop $\gamma$ fixes $\gamma(0,0)$, which implies that $ev_{t,x+mt}\circ m = ev_{t,x}$. These two observations imply the commutativity of diagram \eqref{pal1}, which completes the proof.
\end{proof}

\begin{remark}\label{gluh}
Our aim is now to show that the gluing maps associated to the local description in Theorem \ref{iso} are identical to the gluing maps $\{\phi_{b,a}\}$ of Grojnowski's construction. Before doing this, we introduce a more convenient description of the maps $\{\phi_{b,a}\}$. To this end, let $X$ be a finite $T$-CW complex, let $\U$ be a cover adapted to $\S(X)$, and let $a,b \in E_{T,t}$ be such that $U_a \cap U_b \neq \emptyset$. Choose $x\in \zeta_{T,t}^{-1}(a)$ and $y \in \zeta_{T,t}^{-1}(b)$ such that $V_x \cap V_y \neq \emptyset$, which implies that $V_{t,x} \cap V_{t,y} \neq \emptyset$. It follows from Lemma \ref{hard} that either $X^b \subset X^a$ or $X^a \subset X^b$. We may assume that $X^b \subset X^a$. Let $U$ be an open subset in $U_a \cap U_b$ and let $V \subset V_{x} \cap V_{y}$ be such that $V \cong U$ via $\zeta_{T,t}$. Let $T(a,b)$ be the subgroup of $T$ generated by $T(a)$ and $T(b)$. Consider the composite of isomorphisms
\[
\begin{array}{rcl}
H_T(X^a) \otimes_{H_T} \O_{\t_\C}(V - x) &\cong & H_T(X^b) \otimes_{H_T}  \O_{\t_\C}(V - x) \\
&\cong& H_{T/T(a,b)}(X^b) \otimes_{H_{T/T(a,b)}}  \O_{\t_\C}(V - x) \\ 
&\cong& H_{T/T(a,b)}(X^b) \otimes_{H_{T/T(a,b)}}  \O_{\t_\C}(V - y) \\
&\cong& H_T(X^b) \otimes_{H_T}  \O_{\t_\C}(V - y)
\end{array}
\]
The first map is induced by the inclusion $X^b \subset X^a$, the second map and fourth maps are induced by the change of groups map of Proposition \ref{changeh}, and the third map is  
\[
\id \otimes \mathrm{tr}^*_{y-x}: H_{T/T(a,b)}(X^b) \otimes_{H_{T/T(a,b)}} \O_{\t_\C}(V - x) \longrightarrow H_{T/T(a,b)}(X^b) \otimes_{H_{T/T(a,b)}} \O_{\t_\C}(V - y),
\]
which is a well defined map of $\O_{\t_\C}$-algebras, by Lemma \ref{gart}. All maps thus preserve the $\Z/2\Z$-graded $\O_{\t_\C}$-algebra structure. We now show that the composite map above is identical to the gluing map $\phi_{b,a}$. Consider the commutative diagram
\[
\begin{tikzcd}
V - x \ar[r,"{\mathrm{tr}_{y-x}}"] \ar[d,"{\zeta_{T,t}}"] & V - y \ar[d,"{\zeta_{T,t}}"] \\
U - a \ar[r,"{\mathrm{tr}_{b-a}}"] & U - b 
\end{tikzcd}
\]
of complex analytic isomorphisms. Via this diagram, the composite map above is canonically identified with 
\[
\begin{array}{rcl}
H_T(X^a) \otimes_{H_T} \O_{E_{T,t}}(V - x) &\cong & H_T(X^b) \otimes_{H_T}  \O_{E_{T,t}}(U - a) \\
&\cong& H_{T/T(a,b)}(X^b) \otimes_{H_{T/T(a,b)}} \O_{E_{T,t}}(U - a) \\ 
&\cong& H_{T/T(a,b)}(X^b) \otimes_{H_{T/T(a,b)}} \O_{E_{T,t}}(U - b) \\
&\cong& H_T(X^b) \otimes_{H_T}  \O_{E_{T,t}}(U - b),
\end{array}
\]
where the fourth map is $\id \otimes \mathrm{tr}^*_{b-a}$. This is the gluing map $\phi_{b,a}$ of Grojnowski's construction (see Remark \ref{perry}), and in Theorem \ref{duhh} we show that it is identical to the gluing map associated to Theorem \ref{iso}.
\end{remark}

\begin{lemma}\label{gart}
With the hypotheses of Remark \ref{gluh}, the translation map
\[
\mathrm{tr}^*_{y -x}: \O_{\t_\C}(V - x) \longrightarrow \O_{\t_\C}(V - y)
\]
is $H_{T/T(a,b)}$-linear.
\end{lemma}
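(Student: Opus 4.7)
The plan is to reduce the $H_{T/T(a,b)}$-linearity of $\mathrm{tr}^*_{y-x}$ to the purely geometric statement that $y - x \in \Lie(T(a,b))_\C$, and then to establish the latter via a metric argument in the style of Lemma \ref{hard3}.

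For the reduction: since $\mathrm{tr}^*_{y-x}$ is automatically a ring homomorphism, it is $H_{T/T(a,b)}$-linear precisely when it intertwines the two structure maps $H_{T/T(a,b)} \to \O_{\t_\C}(V-x)$ and $H_{T/T(a,b)} \to \O_{\t_\C}(V-y)$. Via the Chern-Weil isomorphism (Remark \ref{remmie}), the inclusion $H_{T/T(a,b)} \hookrightarrow H_T$ identifies $H_{T/T(a,b)}$ with the subring of $\Sym((\t_\C)^\vee)$ consisting of polynomials that factor through the linear surjection $p \colon \t_\C \twoheadrightarrow \Lie(T/T(a,b))_\C$ induced by $T \twoheadrightarrow T/T(a,b)$; equivalently, polynomials that are translation-invariant along $\ker p = \Lie(T(a,b))_\C$. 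Hence $\mathrm{tr}^*_{y-x}$ preserves the image of $H_{T/T(a,b)}$ if and only if $y - x \in \Lie(T(a,b))_\C$.

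For the geometric step: since $a_i = \exp_T(x_i) \in T(a) \subseteq T(a,b)$ and $b_i = \exp_T(y_i) \in T(b) \subseteq T(a,b)$, both $(x_1,x_2)$ and $(y_1,y_2)$ belong to $\exp_{T\times T}^{-1}(T(a,b) \times T(a,b))$, whose connected components are translates of $\Lie(T(a,b)) \times \Lie(T(a,b))$. The hypothesis $V_x \cap V_y \neq \emptyset$ transports via $\xi_T$ to $V_{x_1,x_2} \cap V_{y_1,y_2} \neq \emptyset$, and each of these sits inside a ball of radius $\tfrac12$ by the first property of an adapted cover (cf.\ Lemma \ref{construu}). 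Rerunning the metric separation estimate from the proof of Lemma \ref{hard3} for the subgroup $K = T(a,b)$ then forces the two points to occupy the same component, and writing $y - x = (y_1 - x_1)t_1 + (y_2 - x_2)t_2$ yields $y - x \in \Lie(T(a,b))t_1 + \Lie(T(a,b))t_2 = \Lie(T(a,b))_\C$, as required.

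The main obstacle is that $T(a,b)$ need not itself lie in $\S(X)$, so Lemma \ref{hard3} cannot be cited as a black box: its proof additionally uses the fourth property of an adapted cover to guarantee that $(a_1,a_2)$ and $(b_1,b_2)$ occupy a common component of $K \times K$ when $K \in \S$. My intended workaround is to refine $\U$ via Lemma \ref{construu} to a cover adapted to the enlarged finite set $\S(X) \cup \{T(a,b)\}$; such a refinement is still adapted to $\S(X)$, so the hypotheses of Remark \ref{gluh} remain in force, and Lemma \ref{hard3} then applies directly with $K = T(a,b)$. Alternatively, the metric separation argument from the proofs of Lemma \ref{construu} and Lemma \ref{hard3} can be rerun by hand for this specific choice of $K$ from the explicit ball radii.
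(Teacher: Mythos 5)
Your reduction is exactly the paper's: everything comes down to showing $y - x \in \Lie(T(a,b))_\C$, and the Chern-Weil translation-invariance characterization is the right way to see why that suffices. You have also put your finger on the fragile step, namely that $T(a,b)$ need not lie in $\S(X)$, so Lemma~\ref{hard3} cannot be cited verbatim with $K = T(a,b)$.

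However, the proposed workaround does not close the gap. The hypotheses of Remark~\ref{gluh} fix a particular cover $\U$ adapted to $\S(X)$ together with lifts $x,y$ satisfying $V_x \cap V_y \neq \emptyset$ \emph{for that cover}. If you refine $\U$ to a cover $\U'$ adapted to $\S(X) \cup \{T(a,b)\}$, the open sets $V_x, V_y$ are replaced by smaller ones; the fourth adapted-cover property for $K = T(a,b)$ then forces $V'_x \cap V'_y = \emptyset$ whenever $(a_1,a_2)$ and $(b_1,b_2)$ lie in different components of $T(a,b)\times T(a,b)$, which is precisely the case you need to rule out. So the refined cover simply avoids the configuration rather than proving anything about the original one; the lemma is a statement about $\U$, not $\U'$. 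The alternative of ``rerunning the metric estimate by hand'' fails for a similar reason: the ball radii produced by Lemma~\ref{construu} are computed from distances to components of $K \times K$ for $K \in \S$, so they are not guaranteed small enough to separate components of $T(a,b) \times T(a,b)$ when $T(a,b)$ is absent from $\S$. The paper's own proof is equally terse at this exact step (it invokes ``the same argument as in the proof of Lemma~\ref{hard}'' with $T(a,b)$ playing the role of the isotropy group $K$, which in that proof must lie in $\S(X)$), so your concern about the missing membership $T(a,b) \in \S(X)$ is well founded; but the refinement device as stated does not resolve it, and you should not present it as if it did.
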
 

\begin{proof}
Let $x = x_1t_1 + x_2t_2$ and $y = y_1t_1 + y_2t_2$. By the same argument as in the proof of Lemma \ref{hard}, we have that 
\[
(y_1-x_1, y_2-x_2) \in \mathrm{Lie}(T(a,b)) \times \mathrm{Lie}(T(a,b)),
\]
since $T(a),T(b) \subset T(a,b)$. Therefore
\[
y - x = (y_1-x_1)t_1 + (y_2-x_2)t_2 \in \mathrm{Lie}(T(a,b)) \otimes_\R (\R t_1 + \R t_2) = \mathrm{Lie}(T(a,b)) \otimes_\R \C.
\]
This implies the result.
\end{proof}

\begin{theorem}\label{duhh}
With the hypotheses of Remark \ref{gluh}, the gluing map associated to the local description in Theorem \ref{iso} on an open subset $U \subset U_{a} \cap U_{b}$ is identical to the composite map in Remark \eqref{gluh}.
\end{theorem}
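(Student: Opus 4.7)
The plan is to work on an open subset $V \subset V_x \cap V_y \subset \t_\C$ with $V \cong U \subset U_a \cap U_b$ under $\zeta_{T,t}$, and trace the gluing map of Theorem \ref{iso} step by step, matching it to the four-step composite in Remark \ref{gluh}. The key additional fact available on the overlap is the inclusion $i \colon L^2X^{t,y} \hookrightarrow L^2X^{t,x}$ supplied by Lemma \ref{hard} (since $X^b \subset X^a$), which is what relates the two local descriptions of $\E^*_{T,t}(X)$. Because both descriptions come from the single sheaf $\iota_t^*\,\H^*_{\bT^2\times T}(L^2X)$ restricted to $V$, the gluing map is determined by how the fixed-point isomorphisms of Proposition \ref{extend} factor via $L^2X^{t,x}$ and $L^2X^{t,y}$.

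First, by functoriality of the inverse limit in Theorem \ref{yess} and compatibility of Proposition \ref{extend} with the inclusions $L^2X^{t,y} \subset L^2X^{t,x} \subset L^2X$, the gluing map fits into a commutative diagram whose only non-change-of-group contribution is $i^*$. By naturality of Lemma \ref{equal}, the map $i$ corresponds under the evaluation maps $ev_{t,x}, ev_{t,y}$ to $i_{b,a}\colon X^b \hookrightarrow X^a$, compatibly with the isomorphisms $\nu$ of Remark \ref{fern}. This produces the first map $i_{b,a}^* \otimes \id$ of Remark \ref{gluh}.

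Next, the $U_a$-description of Theorem \ref{iso} involves the chain of change-of-groups isomorphisms $H_T \to H_{T/T(a)} \to H_{K(t,x)} \to H_{\bT^2 \times T}$ (all special cases of Proposition \ref{changeh}), and the $U_b$-description uses the analogue with $b$ and $y$. Using the inclusions $T(a), T(b) \subset T(a,b)$ and iterated application of Lemma \ref{kurt}, this tower of restriction-extensions collapses, when applied to the cohomology of $X^b$, to the pair of maps $H_T(X^b) \to H_{T/T(a,b)}(X^b) \to H_T(X^b)$ appearing as the second and fourth steps of Remark \ref{gluh}. Finally, the translation $\mathrm{tr}^*_{y-x}$ comes from Lemma \ref{dg}: the $U_a$-description shifts stalks via $\mathrm{tr}_{-x}$ while the $U_b$-description uses $\mathrm{tr}_{-y}$, so the overlap comparison is $\mathrm{tr}_{-y}\circ \mathrm{tr}_x = \mathrm{tr}_{x-y}$ on Lie algebras, hence $\mathrm{tr}^*_{y-x}$ on holomorphic functions. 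By Lemma \ref{gart} this is $H_{T/T(a,b)}$-linear, which is precisely what is needed for it to slot in as the third map of Remark \ref{gluh}.

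The main obstacle will be the bookkeeping in the second step: verifying that the tower of change-of-groups maps factoring through $K(t,x)$ and $K(t,y)$ really does collapse to the single pair $H_T \rightleftarrows H_{T/T(a,b)}$ on both sides, compatibly with the $i_{b,a}^*$ of the first step and the $\mathrm{tr}^*_{y-x}$ of the third. This reduces to verifying the commutativity of several intermediate squares arising from the natural maps of short exact sequences $T(a) \hookrightarrow T(t,x) \twoheadrightarrow \bT^2$ and $T(b) \hookrightarrow T(t,y) \twoheadrightarrow \bT^2$, together with their common join $\langle T(t,x), T(t,y)\rangle$. Once this compatibility is verified by repeated application of Lemma \ref{kurt} and naturality of the Chern-Weil isomorphism, the identification of the full composite with $\phi_{b,a}$ follows directly.
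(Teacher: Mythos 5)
Your plan is essentially the paper's proof. The paper realizes exactly this strategy as one large commutative diagram \eqref{dirt}: the outer path decomposes the Theorem \ref{iso} gluing map through $i_{y,x}^*$ and the change-of-groups chains (via $K(t,x)$, $K(t,y)$, and the join $T(x,y) = \langle T(t,x), T(t,y)\rangle$ you allude to at the end), while the left column is $\phi_{b,a}$; each region is then checked using precisely the tools you cite — naturality of Propositions \ref{changeh} and Lemma \ref{equal1}, diagram \eqref{dia2}, Lemma \ref{kurt}, and Lemmas \ref{dg} and \ref{gart} for the translation.
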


\begin{proof}
We must first make a few observations before we can make sense of the diagram which we will use to prove the theorem. Let $T(x,y)$ be the subgroup of $\bT^2 \times T$ generated by $T(t,x)$ and $T(t,y)$. We have $T(a,b) \subset T \cap T(x,y)$, since
\begin{gather*}
T(a,b) = \langle T(a),T(b) \rangle = \langle T \cap T(t,x), T \cap T(t,y) \rangle \\ \subset T \cap \langle T(t,x),T(t,y) \rangle = T \cap T(x,y).
\end{gather*}
Furthermore, the inclusion $T \subset \bT^2 \times T$ induces an isomorphism $T/(T\cap T(x,y)) \cong (\bT^2 \times T)/T(x,y)$, as may be verified by chasing a diagram analogous to \eqref{rect}. We therefore have identifications 
\begin{gather*}
X^{T \cap T(x,y)} = \Map_T(T/(T\cap T(x,y)),X) \cong \Map_T((\bT^2 \times T)/T(x,y),X) \\ = (L^2X)^{T(x,y)} = L^2X^{t,y} \cong X^b, 
\end{gather*}
where the mapping spaces $\Map_T(-,-)$ of $T$-equivariant maps are identified with fixed-point spaces by evaluating at $0 \in T$. The composite is $T$-equivariant if we let $T$ act on mapping spaces via the target space. Thus, $X^b$ is fixed by $T\cap T(x,y)$, and the homeomorphism $X^b \cong LX^{t,y}$ is equivariant with respect to $T/(T\cap T(x,y)) \cong (\bT^2 \times T)/T(x,y)$. Finally, note that Lemma \ref{hard} implies that 
\begin{equation}\label{dia2}
\begin{tikzcd}
L^2X^{t,y} \ar[r,hook,"{i_{y,x}}"] \ar[d,"{\cong}"',"{ev_{t,y}}"] & L^2X^{t,x} \ar[d,"{\cong}"',"{ev_{t,x}}"]  \\
X^b \ar[r,hook,"{i_{b,a}}"] & X^a
\end{tikzcd}
\end{equation}
commutes. Now, the diagram is as follows.

\begin{equation}\label{dirt}
\begin{tikzcd}[row sep=large, column sep=0.5cm]
H_T(X^a)_{V-x} \ar[r] \ar[d] \drar[phantom, "(1)"] & H_{T/T(a)}(X^a)_{V-x} \ar[r] \ar[d] \drar[phantom, "(2)"] & H_{K(t,x)}(L^2X^{t,x})_{\{t\} \times V} \ar[r] \ar[d] \drar[phantom, "(1)"] & H_{\bT^2\times T}(L^2X^{t,x})_{\{t\} \times V} \ar[d] \\
H_T(X^b)_{V-x} \ar[r] \ar[d]\drar[phantom, "(3)"] & H_{T/T(a)}(X^b)_{V-x} \ar[r] \ar[d] \drar[phantom, "(4)"]& H_{K(t,x)}(L^2X^{t,y})_{\{t\} \times V} \ar[r] \ar[d]  \drar[phantom, "(3)"]& H_{\bT^2\times T}(L^2X^{t,y})_{\{t\} \times V} \ar[d] \\
H_{T/T(a,b)}(X^b)_{V-x} \ar[r] \ar[d] \drar[phantom, "(5)"]& H_{T/(T\cap T(x,y))}(X^b)_{V-x} \ar[r] \ar[d] \drar[phantom, "(6)"]& H_{(\bT^2 \times T)/T(x,y)}(L^2X^{t,y})_{\{t\} \times V} \ar[r] \ar[dl, bend left=15] \dar[phantom, "(4)"] & H_{K(t,y)}(L^2X^{t,y})_{\{t\} \times V} \ar[dl] \\
H_{T/T(a,b)}(X^b)_{V-y} \ar[r] \ar[d] & H_{T/(T\cap T(x,y))}(X^b)_{V-y} \dlar[phantom, "(3)"'] & H_{T/T(b)}(X^b)_{V-y} \ar[dll] \ar[l] \\
H_T(X^b)_{V-y} \\
\end{tikzcd}
\end{equation}

Each map is an isomorphism of $\O_{\t_\C}(V)$-algebras, and is exactly one of the following four types:
\begin{itemize}
\item the change of groups map of Proposition \ref{changeh} (if the target and source only differ by equivariance group);
\item the map induced by an inclusion of spaces (if the target and source only differ by the topological space); 
\item the translation map $\id \otimes \mathrm{tr}^*_{y-x}$ of Remark \ref{gluh} (these are the vertical maps of region (5) - note that translation by $y-x$ is $H_{T/(T\cap T(x,y))}$-linear, since $T(a,b) \subset T \cap T(x,y)$); or
\item the map of Lemma \ref{equal1}, or a map obtained in a way exactly analogous to the proof of Lemma \ref{equal1} (see below).
\end{itemize}

If diagram \eqref{dirt} commutes, then the two outermost paths from $H_T(X^a)_{V-x}$ to $H_T(X^b)_{V-y}$ are equal, which yields the result. Indeed, this is true, because each of the numbered regions in diagram \eqref{dirt} commutes for the reasons stated respectively below.
\begin{enumerate}[(1)]
\item By naturality of the isomorphism of Proposition \ref{changeh}.
\item By naturality of the isomorphism of Lemma \ref{equal1}, along with diagram \eqref{dia2}.
\item By Lemma \ref{kurt}.
\item This holds essentially because the homeomorphism $X^b \cong LX^{t,y}$ is equivariant with respect to the inclusion $T \subset \bT^2 \times T$. (See below for a more detailed proof.)
\item This is obvious.
\item Note that both $(t,x)$ and $(t,y)$ are contained in the complexified Lie algebra of $T(x,y)$, and are therefore also in the kernel of 
\[
\C^2 \times \t_\C \twoheadrightarrow \mathrm{Lie}((\bT^2 \times T)/T(x,y))_\C.
\]
Thus, in the proof of Lemma \ref{equal1}, we may translate by either $(t,x)$ or $(t,y)$, leading to the horizontal and diagonal maps, respectively. These two maps evidently commute with the vertical map, which is $\id \otimes \mathrm{tr}^*_{y-x}$.
\end{enumerate}

In more detail, the claim of item 4 holds by a proof similar to that of Lemma \ref{equal1}. Indeed, the identification $X^b \cong LX^{t,y}$ is equivariant with respect to the isomorphisms $T/(T\cap T(x,y)) \cong (\bT^2 \times T)/T(x,y)$, $T/T(a) \cong K(t,x)$ and $T/T(b) \cong K(t,y)$, which are all induced by the inclusion $T \subset \bT^2 \times T$. This implies that, in the case of the middle square, we have an isomorphism of the diagram 
\begin{equation}
\begin{tikzcd}
E(T/T(a))  \times_{T/T(a)} X^{b} \ar[d] \ar[r] & B(T/T(a)) \ar[d] \\
E(T/(T\cap T(x,y))) \times_{T/(T\cap T(x,y))} X^b \ar[r] &  B(T/(T\cap T(x,y))), \\
\end{tikzcd}
\end{equation}
which induces the left vertical map, and the diagram  
\begin{equation}
\begin{tikzcd}
E(K(t,x)) \times_{K(t,x)} L^2X^{t,y} \ar[d] \ar[r] & B(K(t,x)) \ar[d] \\
E((\bT^2 \times T)/T(x,y)) \times_{(\bT^2 \times T)/T(x,y)} L^2X^{t,y} \ar[r] &  B((\bT^2 \times T)/T(x,y)), \\
\end{tikzcd}
\end{equation}
which induces the right vertical map. We therefore have a commutative square 
\[
\begin{tikzcd}
H_{T/T(a)}(X^b) \ar[r] \ar[d] & H_{K(t,x)}(L^2X^{t,y}) \ar[d] \\
H_{T/(T\cap T(x,y))}(X^b) \otimes_{H_{T/(T\cap T(x,y))}} H_{T/T(a)} \ar[r] & H_{(\bT^2 \times T)/T(x,y)}(L^2X^{t,y}) \otimes_{H_{(\bT^2 \times T)/T(x,y)}} H_{K(t,x)}
\end{tikzcd}
\]
of isomorphisms of $H_{T/T(a)} \cong H_{K(t,x)}$-algebras. Then, to get isomorphisms of $\O_{\t_\C}(V)$-algebras, we tensor the left hand side over $p_a \circ \mathrm{tr}_{-x}$ and the right hand side over $p_{t,x}\circ \iota_t$, as in the diagram of Lemma \ref{dg}. One shows the commutativity of the other square labelled (4) in exactly the same way, replacing $T(a)$ with $T(b)$ and $K(t,x)$ with $K(t,y)$.
\end{proof}

\begin{corollary}\label{glutes}
There is an isomorphism of reduced $T$-equivariant elliptic cohomology theories 
\[
\widetilde{\E}^*_{T,t} \cong \widetilde{\G}^*_{T,t},
\]
in the sense of Definition \ref{tear}.
\end{corollary}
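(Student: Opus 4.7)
The plan is to assemble the local isomorphisms of Theorem \ref{iso} into a global isomorphism of sheaves on $E_{T,t}$, verify compatibility with the gluing data of Grojnowski's construction using Theorem \ref{duhh}, and then pass to reduced theories by restricting to the kernel of the map induced by the inclusion of the basepoint.

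First, fix a pointed finite $T$-CW complex $X$ and choose an open cover $\U = \{U_a\}_{a \in E_{T,t}}$ of $E_{T,t}$ adapted to $\S(X)$ (which exists by Lemma \ref{constru} applied to the induced cover of $E_{T,t}$). By Theorem \ref{iso}, for every $a \in E_{T,t}$ there is an isomorphism $\Phi_a : \E^*_{T,t}(X)_{U_a} \xrightarrow{\sim} \G^*_{T,t}(X)_{U_a}$ of $\Z/2\Z$-graded $\O_{U_a}$-algebras, natural in $X$. To glue these, I need to check that on each nonempty overlap $U_a \cap U_b$ the restrictions $\Phi_a|_{U_a \cap U_b}$ and $\Phi_b|_{U_a \cap U_b}$ are intertwined by Grojnowski's gluing map $\phi_{b,a}$. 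But this is precisely the content of Theorem \ref{duhh}: the gluing map induced from the local description in Theorem \ref{iso} is literally identical to $\phi_{b,a}$. Hence the $\Phi_a$ descend to a well-defined global isomorphism $\Phi : \E^*_{T,t}(X) \xrightarrow{\sim} \G^*_{T,t}(X)$ of $\Z/2\Z$-graded, coherent $\O_{E_{T,t}}$-algebras.

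Naturality of $\Phi$ in $X$ follows from naturality of the local isomorphisms: given a $T$-equivariant map $f : X \to Y$, we refine $\U$ so that it is adapted to $\S(f)$ and apply naturality in Theorem \ref{iso} on each patch. Since $\Phi$ preserves the algebra map induced by the inclusion of the basepoint $\pt \hookrightarrow X$, it restricts to an isomorphism of the kernels, yielding $\widetilde{\Phi} : \widetilde{\E}^*_{T,t}(X) \xrightarrow{\sim} \widetilde{\G}^*_{T,t}(X)$, natural in $X$. It remains to check compatibility with the suspension isomorphisms of Proposition \ref{jezza} and Proposition \ref{kerry}. Both suspension isomorphisms are built from the same suspension isomorphism on Borel-equivariant cohomology (applied either to $X^a$, or to $L^2X^{t,x} \cong X^a$ via Lemma \ref{equal} followed by Lemma \ref{wedger}); so compatibility is immediate from the naturality of the suspension isomorphism of Borel-equivariant cohomology under the evaluation homeomorphism $ev_{t,x}$.

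The main obstacle has in fact already been discharged: identifying the stalkwise/patchwise local model of $\E^*_{T,t}$ with Grojnowski's local model (Theorem \ref{iso}) and showing that the resulting transition data agrees with Grojnowski's gluing maps (Theorem \ref{duhh}). With these in hand, the proof of the corollary is essentially a formal gluing argument combined with the observation that taking the kernel of the basepoint map and commuting with the suspension isomorphism are both preserved by our patchwise isomorphism.
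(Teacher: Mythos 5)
Your proposal is correct and takes essentially the same route as the paper: both invoke Theorem \ref{iso} for the patchwise isomorphism, Theorem \ref{duhh} for compatibility with Grojnowski's gluing maps, and observe that the suspension isomorphisms on each side both descend from the suspension isomorphism of Borel-equivariant cohomology (so they match). Your write-up simply spells out in more detail the gluing, naturality, and passage to reduced theories that the paper's two-sentence proof leaves implicit.
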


\begin{proof}
The local description of $\E_{T,t}(X)$ given in Theorems \ref{iso} and \ref{duhh} amounts to the natural isomorphism that we require. It is clear that this is compatible with suspension isomorphisms since, in each theory, these are induced by the suspension isomorphism on cohomology, by the proofs of Propositions \ref{kerry} and \ref{jezza}. 
\end{proof}

\end{document}